\newcommand{\ignore}[1]{}
\newcommand{\abs}[1]{\left| #1 \right|}
\newcommand{\expr}[1]{\left( #1 \right)}
\newcommand{\scalar}[1]{\left\langle #1 \right\rangle}
\newcommand{\tscalar}[1]{\langle #1 \rangle}
\newcommand{\A}{\mathcal{A}}
\newcommand{\C}{\mathbf{C}}
\newcommand{\domain}{\mathcal{D}}
\newcommand{\R}{\mathbf{R}}
\newcommand{\pr}{\mathbf{P}}
\newcommand{\ex}{\mathbf{E}}
\newcommand{\ind}{\mathbf{1}}
\newcommand{\eps}{\varepsilon}
\newcommand{\ph}{\varphi}
\newcommand{\thet}{\vartheta}
\newcommand{\sub}{\subseteq}
\newcommand{\pvint}{\pv\!\!\int}
\newcommand{\even}{\mathrm{even}}
\newcommand{\odd}{\mathrm{odd}}
\newcommand{\schwartz}{\mathcal{S}}
\newcommand{\fourier}{\mathcal{F}}
\newcommand{\laplace}{\mathcal{L}}
\newcommand{\cop}{{\R \setminus 0}}
\newcommand{\conv}{*}
\theoremstyle{plain}
\newtheorem{theorem}{Theorem}[section]
\newtheorem{lemma}[theorem]{Lemma}
\newtheorem{corollary}[theorem]{Corollary}
\newtheorem{proposition}[theorem]{Proposition}
\theoremstyle{definition}
\newtheorem{definition}[theorem]{Definition}
\newtheorem{example}[theorem]{Example}
\newtheorem{remark}[theorem]{Remark}
\theoremstyle{remark}
\numberwithin{equation}{section}
\DeclareMathOperator{\sign}{sign}
\DeclareMathOperator{\supp}{supp}
\DeclareMathOperator{\re}{Re}
\DeclareMathOperator{\im}{Im}
\DeclareMathOperator{\pv}{pv}
\DeclareMathOperator{\Arg}{Arg}
\newcommand{\formula}[2][nolabel]
{\ifthenelse{\equal{#1}{nolabel}}
 {\begin{align*} #2 \end{align*}}
 {\ifthenelse{\equal{#1}{}}
  {\begin{align} #2 \end{align}}
  {\begin{align} \label{#1} #2 \end{align}}
 }
}
\title[Symmetric L{\'e}vy processes killed upon hitting $0$]{Spectral theory for symmetric one-dimensional L{\'e}vy processes killed upon hitting the origin}
\author{Mateusz Kwa{\'s}nicki}
\thanks{Work supported by the Polish Ministry of Science and Higher Education grant no.\ N~N201 373136}
\thanks{The author received financial support of the Foundation for Polish Science}
\address{Institute of Mathematics \\ Polish Academy of Sciences \\ ul. {\'S}niadeckich 8, 00-976 Warszawa, Poland}
\email{m.kwasnicki@impan.pl}
\address{Institute of Mathematics and Computer Science \\ Wroc{\l}aw University of Technology \\ ul. Wybrze{\.z}e Wyspia{\'n}\-skiego 27, 50-370 Wroc{\l}aw, Poland}
\email{mateusz.kwasnicki@pwr.wroc.pl}
\begin{document}

\sloppy

\begin{abstract}
Spectral theory for transition operators of one-dimensional symmetric L{\'e}vy process killed upon hitting the origin is studied. Under very mild assumptions, an integral-type formula for eigenfunctions is obtained, and eigenfunction expansion of transition operators and the generator is proved. As an application, and the primary motivation, integral fomulae for the transition density and the distribution of the hitting time of the origin are given in terms of the eigenfunctions.
\end{abstract}

\maketitle

%
%                            ---------- o ----------
%

\section{Introduction}

In two recent articles~\cite{bib:k10, bib:kmr11}, spectral theory for some symmetric L{\'e}vy processes killed upon leaving the half-line was developed. One of the main motivations for these research came from fluctuation theory for L\'evy processes: the distribution of the supremum functional and the first passage time can be expressed in terms of the eigenfunctions of the corresponding transition semigroup. The purpose of the present paper is to obtain similar results for processes killed upon hitting the origin, and apply them to the study of the hitting time of a single point. The following theorem is our main result.

\begin{theorem}
\label{th:taux}
Let $X_t$ be a symmetric one-dimensional L{\'e}vy process, starting at $0$, with L{\'e}vy-Khintchine exponent $\Psi(\xi)$, and suppose that $\Psi'(\xi) > 0$ and $2 \xi \Psi''(\xi) \le \Psi'(\xi)$ for $\xi > 0$, and that $1 / (1 + \Psi(\xi))$ is integrable. Let $\tau_x$ be the first hitting time of $\{x\}$. Then
\formula[eq:taux]{
 \pr(t < \tau_x < \infty) & = \frac{1}{\pi} \int_0^\infty \frac{\cos \thet_\lambda e^{-t \Psi(\lambda)} \Psi'(\lambda) F_\lambda(x)}{\Psi(\lambda)} \, d\lambda
}
for $t > 0$ and almost all $x \in \R$. Here $F_\lambda$ is a bounded, continuous function, defined by
\formula[eq:f-def]{
 F_\lambda(x) & = \sin(|\lambda x| + \thet_\lambda) - G_\lambda(x)
}
for $x \in \R$, where
\formula[eq:theta-def]{
 \thet_\lambda & = \arctan\expr{\frac{1}{\pi} \, \int_0^\infty \expr{\frac{2 \lambda}{\lambda^2 - \xi^2} - \frac{\Psi'(\lambda)}{\Psi(\lambda) - \Psi(\xi)}} d\xi} ,
}
and $G_\lambda$ is an $L^2(\R) \cap C_0(\R)$ function with (integrable) Fourier transform
\formula[eq:g-def]{
 \fourier G_\lambda(\xi) & = \cos \thet_\lambda \expr{\frac{2 \lambda}{\lambda^2 - \xi^2} - \frac{\Psi'(\lambda)}{\Psi(\lambda) - \Psi(\xi)}}
}
for $\xi \in \R \setminus \{-\lambda, \lambda\}$.
\end{theorem}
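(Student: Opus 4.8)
The plan is to derive \eqref{eq:taux} from a generalized eigenfunction expansion of the transition semigroup $\{P_t\}$ of $X_t$ killed upon hitting $0$, in the spirit of~\cite{bib:k10, bib:kmr11}. Integrability of $1/(1+\Psi)$ is precisely the Kesten--Bretagnolle condition for single points to be non-polar, so the killed process is non-degenerate; its $L^2(\R)$ generator $A$ is the Fourier multiplier with symbol $-\Psi$ together with a Dirichlet condition at the origin, self-adjoint with spectrum $(-\infty,0]$. Odd functions already vanish at $0$, so the odd part of $L^2(\R)$ is untouched by the killing and is diagonalised there by the sine transform; hence only even functions matter. The first step is to confirm that $F_\lambda$ of \eqref{eq:f-def}--\eqref{eq:g-def} is a genuine bounded continuous generalized eigenfunction. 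The symbol \eqref{eq:g-def} is an honest $L^1(\R)\cap L^2(\R)$ function: the simple poles at $\xi=\pm\lambda$ of $2\lambda/(\lambda^2-\xi^2)$ and of $\Psi'(\lambda)/(\Psi(\lambda)-\Psi(\xi))$ cancel — here $\Psi'(\lambda)>0$ is essential — and the bracket is $O(1/\Psi(\xi))$ at infinity, hence integrable. A short Fourier-transform computation, using $\fourier\sin(\lambda|\cdot|)(\xi)=2\lambda/(\lambda^2-\xi^2)$ in the principal-value sense, then gives that $(\Psi(\xi)-\Psi(\lambda))\,\fourier F_\lambda(\xi)$ is constant, namely $-\cos\thet_\lambda\,\Psi'(\lambda)$; equivalently $(\Psi(-i\nabla)-\Psi(\lambda))F_\lambda$ is a multiple of $\delta_0$, so $A F_\lambda=-\Psi(\lambda)F_\lambda$ on $\R\setminus\{0\}$, the $\delta_0$ being absorbed by the Dirichlet condition. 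Finally $F_\lambda(0)=\sin\thet_\lambda-G_\lambda(0)$ and $G_\lambda(0)=\frac1{2\pi}\int_\R\fourier G_\lambda$, so the requirement $F_\lambda(0)=0$ is exactly equation \eqref{eq:theta-def}; the hypothesis $2\xi\Psi''(\xi)\le\Psi'(\xi)$, i.e.\ $\Psi'(\xi)/\sqrt{\xi}$ nonincreasing, is what guarantees, by the arguments of~\cite{bib:k10, bib:kmr11}, that the arctangent argument converges with $|\thet_\lambda|<\pi/2$ and that $F_\lambda$ carries no exponentially growing component, so that it is bounded.

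The technical heart is the expansion itself: for $f$ in the even part of $L^2(\R)$,
\[
 P_t f(x) = \frac1\pi\int_0^\infty e^{-t\Psi(\lambda)}\,F_\lambda(x)\,\Bigl(\int_\R F_\lambda(y)f(y)\,dy\Bigr)\,d\lambda ,
\]
with the transform $f\mapsto\bigl(\int_\R F_\lambda f\bigr)_{\lambda>0}$ an isometry (up to normalisation) onto $L^2(0,\infty)$ intertwining $A$ with multiplication by $-\Psi(\lambda)$. I would prove this as in~\cite{bib:k10, bib:kmr11}: start from the $q$-resolvent kernel of the killed process, $G_q(x,y)=u_q(x-y)-u_q(x)u_q(y)/u_q(0)$ with $u_q(x)=\frac1\pi\int_0^\infty\cos(\lambda x)/(q+\Psi(\lambda))\,d\lambda$ (a bounded continuous function, once more because $1/(1+\Psi)$ is integrable), continue it analytically across the negative half-line in $q$, and read off the spectral density from the boundary jump. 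Strict monotonicity of $\Psi$ on $(0,\infty)$ makes $\lambda$ a legitimate simple spectral parameter — the change of variables $\mu=\Psi(\lambda)$, $d\mu=\Psi'(\lambda)\,d\lambda$ — and the bound on $\Psi''$ again keeps the eigenfunctions bounded. Proving \emph{completeness} of $\{F_\lambda\}_{\lambda>0}$ — that the transform is onto, not just isometric on its range — is the main obstacle; the remaining estimates are routine.

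It then remains to apply the expansion to $\ind$, which represents $\pr_x(\tau_0>t)=P_t\ind(x)$. Since $\ind\notin L^2(\R)$ I would approximate it by $\ind\,\varphi_n$ with smooth compactly supported $\varphi_n\uparrow1$ and pass to the limit — the left-hand side via the probabilistic representation $P_tf(x)=\ex_x[f(X_t);\,\tau_0>t]$, the right-hand side by dominated convergence in $\lambda$, using the factor $e^{-t\Psi(\lambda)}$ — or, when $X_t$ is transient, split $\ind=h+g$ with $h(x)=\pr_x(\tau_0=\infty)$ a bounded $\Psi$-harmonic function vanishing at $0$ and fixed by $P_t$, $g=\ind-h\in L^2(\R)$, and note $\langle F_\lambda,h\rangle=0$ because $h(0)=0=F_\lambda(0)$. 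Either way the coefficient is $\int_\R F_\lambda(y)\,dy$, read off as the value at $0$ of $\fourier F_\lambda$, which is smooth near $0$: since $\int_\R\sin(\lambda|y|+\thet_\lambda)\,dy=2\cos\thet_\lambda/\lambda$ as a regularised oscillatory integral and $\int_\R G_\lambda(y)\,dy=\fourier G_\lambda(0)=\cos\thet_\lambda\bigl(2/\lambda-\Psi'(\lambda)/\Psi(\lambda)\bigr)$ by \eqref{eq:g-def} and $\Psi(0)=0$, the $2\cos\thet_\lambda/\lambda$ terms cancel and $\int_\R F_\lambda(y)\,dy=\cos\thet_\lambda\,\Psi'(\lambda)/\Psi(\lambda)$. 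Hence $P_t\ind(x)=h(x)+\frac1\pi\int_0^\infty e^{-t\Psi(\lambda)}F_\lambda(x)\cos\thet_\lambda\,\Psi'(\lambda)\,\Psi(\lambda)^{-1}\,d\lambda$, so that $\pr_x(t<\tau_0<\infty)=\pr_x(\tau_0>t)-h(x)$ is the right-hand side of \eqref{eq:taux}; by translation invariance $\pr_0(t<\tau_x<\infty)=\pr_{-x}(t<\tau_0<\infty)$, which equals $\pr_x(t<\tau_0<\infty)$ by symmetry of $X_t$ (or evenness of $F_\lambda$). The exceptional null set in $x$ is the usual artefact of an $L^2$ identity, removed afterwards once both sides are seen to be continuous in $x$. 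I expect the only real friction to be justifying the non-$L^2$ limit and the conditionally convergent integral $\int_\R F_\lambda$.
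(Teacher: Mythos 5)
Your eigenfunction setup — the cancellation of poles at $\pm\lambda$, the Fourier computation showing $(\Psi(\xi)-\Psi(\lambda))\fourier F_\lambda=$ const, the vanishing condition $F_\lambda(0)=0$ giving $\thet_\lambda$, and the even/odd splitting of $L^2$ — matches the paper's Sections~3 and~4 essentially verbatim, including the observation that completeness of $\{F_\lambda\}$ is the hard part (the paper devotes Lemmas~\ref{lem:1}--\ref{lem:representation} to it). The real trouble is the final step, applying the expansion to the constant $\ind$, and here both of your suggested routes have genuine gaps that the paper explicitly warns about.

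Your route (a) --- approximate $\ind$ by $\ind\ph_n\in L^2$ and pass to the limit by dominated convergence in $\lambda$ --- would, if it worked, give $\pr_x(\tau_0>t)$ on the left and the integral $\frac1\pi\int_0^\infty e^{-t\Psi(\lambda)}F_\lambda(x)\cos\thet_\lambda\Psi'(\lambda)/\Psi(\lambda)\,d\lambda$ on the right. But in the transient case that identity is \emph{false}: the correct left-hand side is $\pr_x(t<\tau_0<\infty)$. The paper's Remark~\ref{rem:misproof} pins down the failure precisely: the family $\Pi_\even e_\xi(\lambda)\,d\lambda$ (equivalently $\Pi_\even(\ind\ph_n)(\lambda)\,d\lambda$) converges only as a measure, and its limit can acquire a point mass at $\lambda=0$; no appeal to dominated or uniform integrability closes this gap without extra hypotheses. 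So this route does not just need tightening --- it proves a wrong statement.

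Your route (b) has the right intuition (the term the naive argument loses is exactly the harmonic function $h(x)=\pr_x(\tau_0=\infty)$), but the justification breaks at two points. First, $g=\ind-h=\pr_\cdot(\tau_0<\infty)$ need not lie in $L^2(\R)$: by Plancherel, $\|g\|_2^2$ scales like $\int_0^1 d\xi/\Psi(\xi)^2$, and if $\Psi(\xi)\asymp\xi^\alpha$ near $0$ with $\alpha\in(1/2,1)$ the process is transient and hits points yet $1/\Psi^2$ is not integrable at $0$. Second, the asserted orthogonality $\scalar{F_\lambda,h}=0$ is not an $L^2$ pairing (neither factor is in $L^2$), and the reason given --- both functions vanish at $0$ --- is not a reason; the meaningful argument would be a spectral-measure statement about absence of point mass at $0$, which again is exactly what is in question. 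Reading $\int F_\lambda\,dy=\cos\thet_\lambda\Psi'(\lambda)/\Psi(\lambda)$ as ``the value of $\fourier F_\lambda$ at $0$'' is also an extrapolation: $\fourier F_\lambda$ is a genuine distribution with a $\pv$ singularity, not a continuous function, so this identity has to be produced, not read off.

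The paper sidesteps all of this by never integrating $F_\lambda$ against $\ind$ directly. It proves Theorem~\ref{th:tau} by taking the double Laplace transform (in $x$ against $e^{-\xi|x|}$ and in $t$ against $e^{-zt}$) of both sides, reducing the right-hand side to the auxiliary Stieltjes functions $\ph(z)$, $\ph(\xi,z)$, $\ph(\xi_1,\xi_2,z)$ via Lemmas~\ref{lem:jump}--\ref{lem:key}, and the left-hand side to $\ex_x e^{-z\tau_0}=u_z(x)/u_z(0)$, then invoking uniqueness of the Laplace transform. The $\lim_{\eps\to0^+}\ph(\xi,\eps)/\ph(\eps)$ term that appears is exactly the Laplace transform in $x$ of your $h$, but it arises on both sides in a controlled way rather than as a subtraction of two divergent quantities. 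To repair your proposal you would either have to reproduce this Laplace-transform bookkeeping, or find a substitute argument that handles the non-$L^2$ constant rigorously; neither version of what you wrote does so.
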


\begin{remark}
It is easy to check that formula~\eqref{eq:taux} can be differentiated in the time variable $t$ under the integral sign. By combining it with estimates of $F_\lambda$, in many cases one can obtain asymptotic estimates of the density function of $\tau_x$, as well as its derivatives, in a similar manner as in~\cite{bib:kmr11} for the first passage time. This topic will be addressed in a separate article.
\end{remark}

\begin{remark}
When the L\'evy measure of $X_t$ has completely monotone density on $(0, \infty)$, then $G_\lambda(x)$ is completely monotone on $[0, \infty)$ and in many cases can be given by a more straightforward formula; see Theorem~\ref{th:glambda}.
\end{remark}

Let us discuss shortly the assumptions of Theorem~\ref{th:taux}. Since the process $X_t$ is assumed to be symmetric, $\Psi(\xi)$ is a real-valued function and $\Psi(\xi) \ge 0$. The assumption $\Psi'(\xi) > 0$ and $2 \xi \Psi''(\xi) \le \Psi'(\xi)$ for $\xi > 0$ is equivalent to the condition $\psi'(\xi) > 0$, $\psi''(\xi) \le 0$ for $\xi > 0$ for the function $\psi(\xi) = \Psi(\sqrt{\xi})$. This is clearly satisfied by all subordinate Brownian motions (and hence for symmetric stable processes and mixtures of such), but also for many less regular processes, such as truncated symmetric stable processes. Examples are discussed in Section~\ref{sec:examples}. Integrability of $1 / (1 + \Psi(\xi))$ asserts that the process $X_t$ hits single points with positive probability.

The class of L{\'e}vy processes studied in~\cite{bib:k10, bib:kmr11} consisted of symmetric processes, whose L{\'e}vy measure admits a completely monotone density function on $(0, \infty)$. This regularity assumption was needed for an application of the Wiener-Hopf method for solving a certain integral equation in half-line. For the case of hitting a single point, considered below, a more direct approach is available, and therefore much more general processes can be dealt with.

From now on we consider the L\'evy process $X_t$ starting at a fixed point $x \in \R$, and denote the corresponding probability and expectation by $\pr_x$ and $\ex_x$. The functions $F_\lambda$ in Theorem~\ref{th:taux} are eigenfunctions of transition operators of $X_t$ killed upon hitting $\{0\}$. These operators are defined by the formula
\formula{
 P^\cop_t f(x) & = \ex_x(f(X_t) \ind_{\{t < \tau_0\}})
}
for $t > 0$, $x \in \cop$, and they act on $L^p(\cop)$ for arbitrary $p \in [1, \infty]$. By $\A_\cop$ and $\domain(\A_\cop; L^p)$ we denote the generator of the transition semigroup $P^\cop_t$ acting on $L^p(\cop)$, and its domain; a more detailed discussion of these notions is given in Preliminaries. Our main results about $F_\lambda$ are contained in the three theorems stated below.

\begin{theorem}
\label{th:flambda}
Let $X_t$ be a symmetric one-dimensional L{\'e}vy process with L{\'e}vy-Khintchine exponent $\Psi(\xi)$, and suppose that $\Psi'(\xi) > 0$ for $\xi > 0$, and that $1 / (1 + \Psi(\xi))$ is integrable. Fix $\lambda > 0$, and let $F_\lambda$ be defined as in Theorem~\ref{th:taux}. Then $F_\lambda(0) = 0$, $F_\lambda \in \domain(\A_\cop; L^\infty)$, and
\formula[eq:f-eigen]{
 \A_{\cop} F_\lambda & = -\Psi(\lambda) F_\lambda , & P^{\cop}_t F_\lambda & = e^{-t \Psi(\lambda)} F_\lambda
}
for $t > 0$. In addition, for $\xi > 0$,
\formula[eq:f-laplace]{
\begin{aligned}
 \int_{-\infty}^\infty F_\lambda(x) e^{-\xi |x|} dx & = 2 \, \frac{\xi \sin \thet_\lambda + \lambda \cos \thet_\lambda}{\xi^2 + \lambda^2} \\
 & \hspace*{-4em} - \frac{2 \cos \thet_\lambda}{\pi} \, \int_0^\infty \frac{\xi}{\xi^2 + \zeta^2} \expr{\frac{2 \lambda}{\lambda^2 + \xi^2} - \frac{\Psi'(\lambda)}{\Psi(\lambda) - \Psi(\zeta)}} d\zeta .
\end{aligned}
}
In the sense of Schwartz distributions,
\formula[eq:f-fourier]{
 \fourier F_\lambda(\xi) & = \cos \thet_\lambda \pv \frac{\Psi'(\lambda)}{\Psi(\lambda) - \Psi(\xi)} + \pi \sin \thet_\lambda (\delta_\lambda(\xi) + \delta_{-\lambda}(\xi))
}
($\pv$ stands for the principal value), and
\formula[eq:f-gen]{
 \A F_\lambda(x) & = -\Psi(\lambda) F_\lambda(x) + \Psi'(\lambda) \cos \thet_\lambda \, \delta_0(x) ,
}
where $\delta_\xi$ is the Dirac delta distribution at $\xi$.
\end{theorem}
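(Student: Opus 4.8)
The plan is to move to the Fourier side, establish the distributional identities \eqref{eq:f-fourier} and \eqref{eq:f-gen} first, and then deduce the operator-theoretic assertions (the value $F_\lambda(0)=0$ comes essentially for free, and the membership in $\domain(\A_\cop; L^\infty)$ together with the two eigenequations will follow from an explicit formula for the $\alpha$-resolvent of $P^\cop_t$). To begin with, since $\Psi' > 0$ on $(0,\infty)$ the function $\Psi$ is strictly increasing there, so $\Psi(\lambda) - \Psi(\xi)$ vanishes on $(0,\infty)$ only at $\xi = \lambda$, and integrability of $1/(1+\Psi)$ forces $\Psi(\xi) \to \infty$; these two facts make all integrals below absolutely convergent away from $\xi = \pm\lambda$, while near $\xi = \pm\lambda$ the two terms in \eqref{eq:g-def} have a common residue, so $\fourier G_\lambda$ is a genuine $L^1$ function (no principal value needed) and $G_\lambda \in L^2(\R) \cap C_0(\R)$. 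Hence $G_\lambda(x) = \tfrac{1}{2\pi}\int \fourier G_\lambda(\xi) e^{ix\xi}\,d\xi$; evaluating at $x=0$, using evenness, and comparing with \eqref{eq:theta-def} gives $G_\lambda(0) = \cos\thet_\lambda \tan\thet_\lambda = \sin\thet_\lambda$, so $F_\lambda(0) = \sin\thet_\lambda - G_\lambda(0) = 0$, and $F_\lambda$ is bounded and continuous.

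Next I would compute $\fourier F_\lambda$ as a tempered distribution. Writing $\sin(|\lambda x|+\thet_\lambda) = \cos\thet_\lambda \sin(\lambda|x|) + \sin\thet_\lambda \cos(\lambda x)$ and using the standard transforms $\fourier[\cos(\lambda\cdot)] = \pi(\delta_\lambda + \delta_{-\lambda})$ and $\fourier[\sin(\lambda|\cdot|)] = \pv\frac{2\lambda}{\lambda^2-\xi^2}$ (the latter from $\sin(\lambda|x|) = \sign(x)\sin(\lambda x)$), and then subtracting \eqref{eq:g-def}, the pole cancellation noted above lets one replace $\pv\frac{2\lambda}{\lambda^2-\xi^2} - \frac{2\lambda}{\lambda^2-\xi^2}$ by $\pv\frac{\Psi'(\lambda)}{\Psi(\lambda)-\Psi(\xi)}$, which is exactly \eqref{eq:f-fourier}. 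For \eqref{eq:f-gen} I would multiply \eqref{eq:f-fourier} by the Fourier symbol $-\Psi(\xi)$ of $\A$; writing $-\Psi(\xi) = (\Psi(\lambda)-\Psi(\xi)) - \Psi(\lambda)$ in the principal-value term and using $\Psi(\pm\lambda) = \Psi(\lambda)$ in the atoms yields $\fourier[\A F_\lambda] = \Psi'(\lambda)\cos\thet_\lambda - \Psi(\lambda)\fourier F_\lambda$, and inverting (the constant corresponding to $\Psi'(\lambda)\cos\thet_\lambda\,\delta_0$) gives \eqref{eq:f-gen}.

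Now to the resolvent. Let $u_\alpha$ be the $\alpha$-potential kernel, $\fourier u_\alpha(\xi) = 1/(\alpha+\Psi(\xi))$; under our hypotheses $u_\alpha$ is integrable, bounded, continuous, positive, with $u_\alpha(0) < \infty$, so $G_\alpha f = u_\alpha \conv f$ is well defined for $f \in L^\infty(\R)$. Multiplying \eqref{eq:f-fourier} by $\fourier u_\alpha$ and using the partial fraction $\frac{1}{(\alpha+\Psi(\xi))(\Psi(\lambda)-\Psi(\xi))} = \frac{1}{\alpha+\Psi(\lambda)}\expr{\frac{1}{\alpha+\Psi(\xi)} + \frac{1}{\Psi(\lambda)-\Psi(\xi)}}$ gives, after inversion,
\[
 G_\alpha F_\lambda(x) = \frac{F_\lambda(x) + \Psi'(\lambda)\cos\thet_\lambda\,u_\alpha(x)}{\alpha+\Psi(\lambda)}\,, \qquad \alpha > 0\,.
\]
Evaluating at $x = 0$, where $F_\lambda(0) = 0$, and combining with the hitting-probability identity $\ex_x e^{-\alpha\tau_0} = u_\alpha(x)/u_\alpha(0)$ and the first-passage decomposition $G^\cop_\alpha f(x) = G_\alpha f(x) - \ex_x[e^{-\alpha\tau_0}]\,G_\alpha f(0)$ (both recalled in Preliminaries), the $u_\alpha$-terms cancel and $G^\cop_\alpha F_\lambda = \frac{1}{\alpha+\Psi(\lambda)}F_\lambda$ for every $\alpha > 0$. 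By the resolvent characterization of $\domain(\A_\cop; L^\infty)$ this gives $F_\lambda \in \domain(\A_\cop; L^\infty)$ with $\A_\cop F_\lambda = \alpha F_\lambda - (\alpha+\Psi(\lambda))F_\lambda = -\Psi(\lambda) F_\lambda$, and then $P^\cop_t F_\lambda = e^{-t\Psi(\lambda)} F_\lambda$ follows from the standard correspondence between eigenvectors of the generator and of the semigroup.

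Finally, \eqref{eq:f-laplace} is a direct computation: the $\sin(|\lambda\cdot|+\thet_\lambda)$ part contributes $2\,\frac{\xi\sin\thet_\lambda + \lambda\cos\thet_\lambda}{\xi^2+\lambda^2}$, and for the $G_\lambda$ part one uses Fourier inversion, Fubini (legitimate since $\fourier G_\lambda \in L^1$), the identity $\int e^{ix\zeta} e^{-\xi|x|}\,dx = \frac{2\xi}{\xi^2+\zeta^2}$, and the partial fraction $\frac{\xi}{\xi^2+\zeta^2}\cdot\frac{2\lambda}{\lambda^2-\zeta^2} = \frac{2\lambda}{\lambda^2+\xi^2}\expr{\frac{\xi}{\xi^2+\zeta^2} + \frac{\xi}{\lambda^2-\zeta^2}}$ together with $\pv\int_0^\infty \frac{\xi\,d\zeta}{\lambda^2-\zeta^2} = 0$ to reach the stated form. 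Routine estimates aside, I expect the two genuine obstacles to be: making the Fourier-side manipulations rigorous at the level of tempered distributions — in particular the pole cancellation at $\pm\lambda$ that turns $\fourier G_\lambda$ into a bona fide $L^1$ function, and the legitimacy of multiplying principal-value distributions by $1/(\alpha+\Psi(\xi))$, which needs a little control on the regularity of $\Psi$; and bridging \eqref{eq:f-gen} to the assertion $F_\lambda \in \domain(\A_\cop; L^\infty)$, which is precisely where the explicit resolvent formula, the hitting-probability identity, and the first-passage decomposition are indispensable.
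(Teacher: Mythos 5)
Your derivation of $F_\lambda(0)=0$, \eqref{eq:f-fourier}, \eqref{eq:f-gen} and \eqref{eq:f-laplace} tracks the paper's own Fourier-side computations essentially step for step, so I will focus on the one place where you genuinely diverge: how you bridge the distributional identity \eqref{eq:f-gen} to the operator-theoretic assertions $F_\lambda \in \domain(\A_\cop; L^\infty)$ and \eqref{eq:f-eigen}. The paper goes through Lemma~\ref{lem:eigenfunctions}, which in turn rests on the distributional-generator machinery and a smooth-cutoff argument (Lemmas~\ref{lem:domain} and~\ref{lem:smooth}, both imported from~\cite{bib:k10}): one splits $F_\lambda$ into a compactly-modified oscillatory part handled by Lemma~\ref{lem:smooth} and a $C_0$ remainder handled by Lemma~\ref{lem:domain}, obtains $\A_\cop F_\lambda = -\Psi(\lambda)F_\lambda$, and only then deduces the semigroup identity by solving the resulting Volterra integral equation. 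You instead compute the \emph{free} resolvent $G_\alpha F_\lambda$ explicitly by a partial-fraction manipulation on the Fourier side (the atoms at $\pm\lambda$ just pick up the scalar $1/(\alpha+\Psi(\lambda))$, and the principal-value term splits cleanly because $1/(\alpha+\Psi(\xi))$ minus its value at $\pm\lambda$ has exactly the right zero to absorb the pole), and then feed this into the first-passage decomposition $G^\cop_\alpha f(x) = G_\alpha f(x) - \ex_x[e^{-\alpha\tau_0}]\,G_\alpha f(0)$ together with $\ex_x e^{-\alpha\tau_0} = u_\alpha(x)/u_\alpha(0)$. The condition $F_\lambda(0)=0$ then makes the $u_\alpha$ terms cancel identically, leaving the remarkably clean $G^\cop_\alpha F_\lambda = F_\lambda/(\alpha+\Psi(\lambda))$. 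This route is more self-contained, avoids the black-box lemmas from~\cite{bib:k10}, and illuminates exactly why $F_\lambda(0)=0$ is the right normalisation.

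Two small corrections to how you have packaged the final step. First, the first-passage decomposition of the resolvent is \emph{not} recalled in Preliminaries; only $\ex_x e^{-z\tau_0} = u_z(x)/u_z(0)$ is. The decomposition is of course standard (it is the Laplace transform in $t$ of Hunt's formula $p_t(x-y) - p^\cop_t(x,y) = \ex_x(p_t(y);\tau_0\le t)$, which the paper does use later, in the proof of Theorem~\ref{th:spectral}), but it should be stated and proved rather than cited. Second, there is no off-the-shelf ``resolvent characterisation of $\domain(\A_\cop;L^\infty)$'': because $P^\cop_t$ is not strongly continuous on $L^\infty(\cop)$, the identity $G^\cop_\alpha = (\alpha-\A_\cop)^{-1}$ on $L^\infty$ is not a textbook fact, and ``the standard correspondence between eigenvectors of the generator and of the semigroup'' likewise needs strong continuity. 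The clean order of deduction is the reverse of what you wrote: from $\int_0^\infty e^{-\alpha t} P^\cop_t F_\lambda(x)\,dt = F_\lambda(x)/(\alpha+\Psi(\lambda))$ for all $\alpha>0$, together with boundedness and right-continuity of $t\mapsto P^\cop_t F_\lambda(x)$, Laplace uniqueness gives $P^\cop_t F_\lambda(x) = e^{-t\Psi(\lambda)}F_\lambda(x)$ pointwise; then $(P^\cop_t F_\lambda - F_\lambda)/t = t^{-1}(e^{-t\Psi(\lambda)}-1)F_\lambda \to -\Psi(\lambda)F_\lambda$ in $L^\infty$, which is precisely the definition~\eqref{eq:gen} of membership in $\domain(\A_\cop;L^\infty)$ with the desired $\A_\cop F_\lambda$. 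With these two adjustments, your argument is a valid alternative to the paper's proof.
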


\begin{remark}
The assertions of the theorem can be interpreted correctly also when $1 / (1 + \Psi(\xi))$ is not integrable. In this case $\thet_\lambda = \arctan(\infty) = \pi/2$, and therefore $\cos \thet_\lambda = 0$, $G_\lambda = 0$, and finally $F_\lambda(x) = \cos(\lambda x)$, as for the free process. This reflects the fact that $X_t$ does not hit $0$ from almost all starting points, and therefore $P_t^{\cop} f(x) = P_t f(x)$ for almost all $x \in \R$ ($P_t$ is the transition semigroup of the free process $X_t$).
\end{remark}

\begin{remark}
For a given $\lambda > 0$, the statement of the theorem remains true if the condition $\Psi'(\xi) > 0$ for all $\xi > 0$ is replaced by the condition $\Psi'(\lambda) > 0$ and $\Psi(\xi) \ne \Psi(\lambda)$ for all $\xi \ne \pm \lambda$. It is an interesting open problem to study the remaining cases, namely: when $\Psi$ is increasing on $(0, \infty)$ and $\Psi'(\lambda) = 0$, and when $\Psi(\xi) = \Psi(\lambda)$ has more than one solution on $(0, \infty)$.
\end{remark}

For estimates and more detailed properties of the eigenfunctions, further regularity of the L{\'e}vy-Khintchine exponent $\Psi(\xi)$ is needed. The required assumption is the same as in~\cite{bib:k10, bib:kmr11}, and it can be put in the following three equivalent forms. The notion of a complete Bernstein function is discussed in Preliminaries.

\begin{proposition}[Proposition~2.14 in~\cite{bib:k10}]
\label{prop:cbf}
Let $X_t$ be a one-di\-men\-sion\-al L{\'e}vy process. The following conditions are equivalent:
\begin{enumerate}
\item[(a)] $\Psi(\xi) = \psi(\xi^2)$ for a \emph{complete Bernstein function} $\psi(\xi)$;
\item[(b)] $X_t$ is a \emph{subordinate Brownian motion} (that is, $X_t = B_{Z_t}$, where $B_s$ is the Brownian motion, $Z_t$ is a non-decreasing L{\'e}vy process, and $B_s$ and $Z_t$ are independent processes), and the L{\'e}vy measure of the underlying subordinator $Z_t$ has completely monotone density function;
\item[(c)] $X_t$ is symmetric and the L{\'e}vy measure of $X_t$ has a completely monotone density function on $(0, \infty)$.
\end{enumerate}
\end{proposition}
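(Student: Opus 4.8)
The plan is to establish the cycle of implications (a)$\Rightarrow$(b)$\Rightarrow$(c)$\Rightarrow$(a), using two standard facts about Bernstein functions: first, that $\psi$ is a complete Bernstein function exactly when $\psi(\eta) = a + b\eta + \int_{(0,\infty)} \frac{\eta}{\eta + u}\,\mu(du)$ for some $a,b \ge 0$ and a measure $\mu$ on $(0,\infty)$ with $\int_{(0,\infty)} (1+u)^{-1}\,\mu(du) < \infty$; and second, that a Bernstein function is complete exactly when it is the Laplace exponent of a subordinator $Z_t$ (so that $\ex e^{-\eta Z_t} = e^{-t\psi(\eta)}$) whose L\'evy measure has a completely monotone density on $(0,\infty)$. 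Throughout I fix the normalisation $\ex e^{i\xi B_s} = e^{-s\xi^2}$, so that $B_s$ has density $p_s(x) = (4\pi s)^{-1/2}\,e^{-x^2/(4s)}$.

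For (a)$\Rightarrow$(b): assuming $\Psi(\xi) = \psi(\xi^2)$ with $\psi$ a complete Bernstein function, the second fact furnishes a subordinator $Z_t$ with Laplace exponent $\psi$ and completely monotone L\'evy density; then the subordinate process $B_{Z_t}$ has $\ex e^{i\xi B_{Z_t}} = \ex e^{-\xi^2 Z_t} = e^{-t\psi(\xi^2)} = e^{-t\Psi(\xi)}$, so it is a version of $X_t$. For (b)$\Rightarrow$(c): symmetry of $X_t = B_{Z_t}$ is inherited from $B$; writing the Laplace exponent of $Z_t$ as $\psi(\eta) = b\eta + \int_{(0,\infty)} (1 - e^{-\eta s})\,\nu_Z(s)\,ds$ with $\nu_Z$ completely monotone, the identity $\int_{-\infty}^\infty (1 - \cos\xi x)\,p_s(x)\,dx = 1 - e^{-\xi^2 s}$ and Tonelli identify the Gaussian coefficient of $X_t$ with $b$ and its L\'evy density with $g(x) = \int_{(0,\infty)} p_s(x)\,\nu_Z(s)\,ds$. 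Representing $\nu_Z(s) = \int_{(0,\infty)} e^{-su}\,\gamma(du)$ by Bernstein's theorem and using the classical integral $\int_0^\infty p_s(x)\,e^{-su}\,ds = (2\sqrt u)^{-1}\,e^{-\abs x\sqrt u}$, one obtains $g(x) = \int_{(0,\infty)} (2\sqrt u)^{-1}\,e^{-\abs x\sqrt u}\,\gamma(du)$, which is a Laplace transform in the variable $\abs x$ of a nonnegative measure (the image of $(2\sqrt u)^{-1}\gamma(du)$ under $u \mapsto \sqrt u$), hence completely monotone on $(0,\infty)$.

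For (c)$\Rightarrow$(a): write $\nu(dx) = g(\abs x)\,dx$ with $g$ completely monotone, so $g(x) = \int_{[0,\infty)} e^{-xt}\,\gamma(dt)$ by Bernstein's theorem; the L\'evy requirement $\int_{\abs x > 1}\nu(dx) < \infty$ forces $\gamma(\{0\}) = 0$, and the two L\'evy integrability conditions on $\nu$ translate into $\int_{(0,\infty)} (t(1+t^2))^{-1}\,\gamma(dt) < \infty$. Substituting the representation of $g$ into $\Psi(\xi) = \sigma^2\xi^2 + \int_{-\infty}^\infty (1 - \cos\xi x)\,\nu(dx)$ and evaluating $\int_0^\infty (1 - \cos\xi x)\,e^{-xt}\,dx = \xi^2/(t(t^2 + \xi^2))$ yields $\Psi(\xi) = \sigma^2\xi^2 + \int_{(0,\infty)} \frac{\xi^2}{t^2 + \xi^2}\cdot\frac{2}{t}\,\gamma(dt)$. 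Putting $\eta = \xi^2$ and letting $\mu$ be the image of $2t^{-1}\gamma(dt)$ under $t \mapsto t^2$ turns this into $\Psi(\xi) = \sigma^2\eta + \int_{(0,\infty)} \frac{\eta}{\eta + u}\,\mu(du) =: \psi(\xi^2)$, where $\int_{(0,\infty)}(1+u)^{-1}\mu(du) < \infty$ by the bound on $\gamma$; thus $\psi$ is a complete Bernstein function by the first fact.

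The only genuine difficulties I anticipate are bookkeeping ones: in (b)$\Rightarrow$(c), making sure the drift of $Z_t$ is attributed to the Gaussian part of $X_t$ and not to its L\'evy measure; and in (c)$\Rightarrow$(a), checking that $\mu$ really satisfies the integrability condition needed for $\psi$ to be a complete Bernstein function. Both reduce to the elementary identities displayed above together with Tonelli's theorem, and indeed the statement is here quoted from~\cite{bib:k10} rather than reproved.
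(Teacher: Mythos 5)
The paper does not prove this proposition; it is quoted verbatim from Proposition~2.14 of~\cite{bib:k10}, so there is nothing in the present text to compare against. Your argument is correct and is the standard one for this characterisation: the cycle (a)$\Rightarrow$(b)$\Rightarrow$(c)$\Rightarrow$(a), the subordination identity for the L\'evy measure, the two Laplace-transform evaluations $\int_0^\infty p_s(x)\,e^{-us}\,ds = (2\sqrt u)^{-1}e^{-|x|\sqrt u}$ and $\int_0^\infty(1-\cos\xi x)e^{-xt}\,dx = \xi^2/(t(t^2+\xi^2))$, and the change of variables $\eta = \xi^2$, $u = t^2$ that turns the L\'evy--Khintchine integral into the Stieltjes-type representation of a complete Bernstein function.

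Two small points are worth making explicit. First, the ``second fact'' you invoke in (a)$\Rightarrow$(b) — that a complete Bernstein function is the Laplace exponent of a subordinator with completely monotone L\'evy density — is only true as stated for complete Bernstein functions with $\psi(0+) = 0$ (i.e.\ $c_1 = 0$ in the representation~\eqref{eq:cbf}); otherwise $\psi$ is the Laplace exponent of a \emph{killed} subordinator. In the present setting this is automatic, since $\Psi$ is a genuine L\'evy--Khintchine exponent and so $\psi(0) = \Psi(0) = 0$, but it is worth noting that you are implicitly using this. Second, in (c)$\Rightarrow$(a) you should also remark that $c_1 = 0$ and $c_2 = \sigma^2 \ge 0$ in the resulting representation of $\psi$, so that $\psi$ is a complete Bernstein function with the correct sign and vanishing constant term; your computation already yields this, but it is the piece that closes the loop with (a)$\Rightarrow$(b). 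With those observations noted, the bookkeeping you flagged at the end — attributing the drift of $Z_t$ to the Gaussian coefficient of $X_t$, and checking the integrability condition on $\mu$ — is exactly the right thing to verify, and both go through as you indicate.
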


\begin{theorem}
\label{th:glambda}
Let $\lambda > 0$. Suppose that the assumptions of Theorem~\ref{th:flambda} hold true.
\begin{enumerate}
\item[(a)]
If $2 \xi \Psi''(\xi) \le \Psi'(\xi)$ for all $\xi > 0$, then we have $\thet_\lambda \in [0, \pi/2)$ and $|G_\lambda(x)| \le G_\lambda(0) = \sin \thet_\lambda$. 
\item[(b)]
Suppose that any of the equivalent conditions of Proposition~\ref{prop:cbf} holds true. Then $\thet_\lambda \in [0, \pi/2)$, $G_\lambda$ is completely monotone on $[0, \infty)$, and $G_\lambda(x) = \laplace \gamma_\lambda(|x|)$ (where $\laplace$ is the Laplace transform) for a finite measure $\gamma_\lambda$. In particular, $G_\lambda(x) \ge 0$, and $G_\lambda$ is integrable,
\formula[eq:gintegral]{
 \int_{-\infty}^\infty G_\lambda(x) dx & = \fourier G_\lambda(0) = \frac{\cos \thet_\lambda}{\lambda} \expr{1 - \frac{\lambda \Psi'(\lambda)}{2 \Psi(\lambda)}} \, .
}
If in addition $\Psi$ extends to a function $\Psi^+(\xi)$ holomorphic in the right complex half-plane $\{\xi \in \C : \re \xi > 0\}$ and continuous in $\{\xi \in \C : \re \xi \ge 0\}$, and $\Psi^+(i \xi) \ne \Psi(\lambda)$ for all $\xi > 0$, then for all $x \in \R$,
\formula[eq:g]{
 G_\lambda(x) & = \frac{\Psi'(\lambda) \cos \thet_\lambda}{\pi} \int_0^\infty \im \frac{1}{\Psi(\lambda) - \Psi^+(i \xi)} \, e^{-\xi |x|} d\xi .
}
\end{enumerate}
\end{theorem}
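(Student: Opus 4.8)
The proof splits according to the two parts; part~(a) rests on concavity of $\psi$, part~(b) on the complete Bernstein representation of $\psi$. Throughout I write $\psi(u) = \Psi(\sqrt u)$, $u = \xi^2$, $v = \lambda^2$, so that $\Psi(\lambda) = \psi(v)$, $\Psi(\xi) = \psi(u)$ and $\Psi'(\lambda) = 2\sqrt v\,\psi'(v)$. For part~(a): as noted after Theorem~\ref{th:taux}, $2\xi\Psi''(\xi)\le\Psi'(\xi)$ on $(0,\infty)$ is equivalent to $\psi$ being increasing and concave. In the variable $u$ the integrand in \eqref{eq:theta-def} equals $2\sqrt v\bigl(\frac1{v-u}-\frac{\psi'(v)}{\psi(v)-\psi(u)}\bigr)$, and concavity gives the secant bounds $\frac{\psi(v)-\psi(u)}{v-u}\ge\psi'(v)>0$ for $u<v$ and $\le\psi'(v)$ for $u>v$; an elementary sign check then shows $\frac1{v-u}-\frac{\psi'(v)}{\psi(v)-\psi(u)}\ge0$ (with a removable singularity at $u=v$). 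Hence the integral in \eqref{eq:theta-def} is nonnegative, so $\thet_\lambda\in[0,\pi/2)$ and $\cos\thet_\lambda>0$; by \eqref{eq:g-def} this forces $\fourier G_\lambda\ge0$. Since $\fourier G_\lambda$ is integrable and even, Fourier inversion yields $|G_\lambda(x)|\le\frac1{2\pi}\int_\R\fourier G_\lambda=G_\lambda(0)$ and $G_\lambda(0)=\frac1\pi\int_0^\infty\fourier G_\lambda(\xi)\,d\xi=\cos\thet_\lambda\tan\thet_\lambda=\sin\thet_\lambda$, the middle equality being the definition \eqref{eq:theta-def}.

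For part~(b): any condition in Proposition~\ref{prop:cbf} makes $\psi$ a complete Bernstein function, hence increasing and concave, so part~(a) applies and gives $\thet_\lambda\in[0,\pi/2)$ and $G_\lambda(0)=\sin\thet_\lambda$. From $\psi(u)=a+bu+\int_{(0,\infty)}\frac u{u+s}\,m(ds)$ one computes $\psi(v)-\psi(u)=(v-u)R(u)$ with $R(u)=b+\int_{(0,\infty)}\frac{s}{(v+s)(u+s)}\,m(ds)$; here $R(v)=\psi'(v)$ and $R$ is a Stieltjes function of $u$. Substituting into \eqref{eq:g-def}, $\fourier G_\lambda(\xi)$ equals $2\lambda\cos\thet_\lambda$ times $\frac1{v-u}-\frac{\psi'(v)}{\psi(v)-\psi(u)}=\frac{R(u)-R(v)}{(v-u)R(u)}$. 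I claim this is a Stieltjes function of $\xi^2$; equivalently, $h(u):=\frac{(v-u)R(u)}{R(u)-R(v)}$ is a complete Bernstein function. This is the crux. Writing $R(u)=b+\int\frac{\nu(ds)}{u+s}$ with $\nu(ds)=\frac s{v+s}m(ds)\ge0$, the divided difference $S(u):=\frac{R(v)-R(u)}{u-v}=\int\frac{\nu(ds)}{(v+s)(u+s)}$ is again Stieltjes, so $1/S$ is complete Bernstein, and $h(u)=\frac{R(v)}{S(u)}-u+v$. The delicate point is that subtracting $u$ preserves the class: the linear coefficient of $1/S$ equals $\lim_{u\to\infty}(uS(u))^{-1}=\bigl(\int\frac{\nu(ds)}{v+s}\bigr)^{-1}=(R(v)-b)^{-1}\ge R(v)^{-1}$, so $\frac{R(v)}{S(u)}-u$ has nonnegative linear coefficient $\frac b{R(v)-b}$, and the remaining terms (and the value on $(0,\infty)$, which is positive by $\fourier G_\lambda\ge0$) are nonnegative as well, so $h$ is complete Bernstein. (Alternatively, $\im h\ge0$ on the upper half-plane follows from a Cauchy--Schwarz estimate against $\nu$, or one first establishes \eqref{eq:g} with $m$ replaced by a mollification and lets the mollification shrink.)

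Granting the claim, $\fourier G_\lambda(\xi)=2\lambda\cos\thet_\lambda/h(\xi^2)$ is a Stieltjes function of $\xi^2$, and, being bounded and integrable, it must be of the form $\int_0^\infty\frac{\sigma(ds)}{\xi^2+s}$. Inverting the Fourier transform, $G_\lambda(x)=\int_0^\infty\frac1{2\sqrt s}e^{-\sqrt s|x|}\sigma(ds)=\laplace\gamma_\lambda(|x|)$, where $\gamma_\lambda$ is the image of $\frac1{2\sqrt s}\sigma(ds)$ under $s\mapsto\sqrt s$; thus $G_\lambda$ is completely monotone on $[0,\infty)$ and nonnegative, and $\gamma_\lambda$ is finite because $\gamma_\lambda([0,\infty))=\lim_{x\to0^+}G_\lambda(x)=G_\lambda(0)=\sin\thet_\lambda$. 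Integrability of $G_\lambda$ and formula \eqref{eq:gintegral} then follow by evaluating $\fourier G_\lambda(0)$ directly from \eqref{eq:g-def} using $\Psi(0)=0$.

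Finally, for \eqref{eq:g}: under the extra hypothesis, $\fourier G_\lambda$ continues holomorphically to the closed upper half-plane with the slit $i(0,\infty)$ removed — the apparent poles at $\pm\lambda$ in \eqref{eq:g-def} cancel, and $\Psi^+(i\xi)\ne\Psi(\lambda)$ excludes poles on the slit. For $x>0$ one shifts the contour in $G_\lambda(x)=\frac1{2\pi}\int_\R\fourier G_\lambda(\xi)e^{i\xi x}\,d\xi$ upward to wrap the slit (justified by the decay of $\fourier G_\lambda(\zeta)e^{i\zeta x}$ in the upper half-plane); only the term $-\cos\thet_\lambda\,\Psi'(\lambda)/(\Psi(\lambda)-\Psi(\zeta))$ is discontinuous across $i(0,\infty)$, with jump $2i\,\cos\thet_\lambda\,\Psi'(\lambda)\,\im\frac1{\Psi(\lambda)-\Psi^+(i\xi)}$, and collecting the resulting integral gives \eqref{eq:g}, which extends to all $x\in\R$ by continuity and evenness. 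The main obstacle is the complete Bernstein claim for $h$ in part~(b); everything else is a routine consequence of it together with part~(a) and the formulas of Theorem~\ref{th:flambda}.
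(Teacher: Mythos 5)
Your proof is correct, and in part~(b) it takes a genuinely different and shorter route than the paper.

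In part~(a) you use the secant inequality $\psi(u)\le\psi(v)+\psi'(v)(u-v)$ directly to show the integrand in \eqref{eq:theta-def} is nonnegative. The paper arrives at the same conclusion via a substitution $\xi=\lambda s$, $\xi=\lambda/s$ and the monotonicity of the function $\psi_\lambda$ from Proposition~\ref{prop:psilambda}; your version is slightly more elementary but has the same content, and both conclude $\fourier G_\lambda\ge 0$, hence $|G_\lambda|\le G_\lambda(0)=\sin\thet_\lambda$ by Fourier inversion.

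In part~(b) your organisation differs substantially. The paper establishes that $\fourier G_\lambda(\xi)$ is a constant multiple of $1/(\psi_\lambda)_\lambda(\xi^2)$, appeals to Proposition~\ref{prop:psilambda} applied twice to conclude this is Stieltjes in $\xi^2$, and then proves that $\laplace\tilde G_\lambda$ (the one-sided Laplace transform) is a Stieltjes function by a fairly intricate argument involving a substitution $\zeta=s\sqrt\xi$, holomorphic extension, and residue computations across three regions; $\gamma_\lambda$ is then produced via Proposition~\ref{prop:cbf:defs}(d). You instead (i)~re-derive the complete Bernstein property of $h(u)=(v-u)R(u)/(R(u)-R(v))$ from scratch by writing out the representing measure of $\psi$ and showing that the subtraction of $u$ from $R(v)/S(u)$ keeps the linear coefficient nonnegative (the coefficient computation $c_2'=\psi'(v)/(\psi'(v)-b)\ge1$ is correct and is the whole point, since the remaining coefficients are automatically nonnegative), and (ii)~once $\fourier G_\lambda(\xi)=\int_0^\infty\sigma(ds)/(\xi^2+s)$, simply Fourier-invert under the integral sign to get $G_\lambda(x)=\int_0^\infty(2\sqrt s)^{-1}e^{-\sqrt s|x|}\sigma(ds)$ directly. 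This bypasses the paper's contour-integration detour through $\laplace\tilde G_\lambda$ entirely, and it is a genuine simplification; finiteness of $\gamma_\lambda$ via $\gamma_\lambda([0,\infty))=\lim_{x\to0^+}G_\lambda(x)=G_\lambda(0)$ is also cleaner than the paper's $L^2\cap L^\infty$ argument. Step~(i) is in effect a self-contained proof of the relevant instance of Proposition~\ref{prop:psilambda}; the paper simply cites that proposition, so the two are comparable in content if not in style.

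Two small caveats. First, your derivation of \eqref{eq:g} by shifting the contour in the Fourier inversion integral to wrap the slit $i(0,\infty)$ is only sketched: you need polynomial boundedness of the holomorphic extension of $\fourier G_\lambda$ in the upper half-plane away from the slit (this requires estimates of the type in Proposition~\ref{prop:cbf:ests}), and you need to check that the extensions from $\re\zeta>0$ and $\re\zeta<0$ match along the slit up to the complex conjugate, so that the jump really is $2i\,\cos\thet_\lambda\,\Psi'(\lambda)\,\im(\Psi(\lambda)-\Psi^+(i\xi))^{-1}$. The paper instead reads off $\gamma_\lambda$ from the boundary values of the Stieltjes function $\laplace\tilde G_\lambda$ via Proposition~\ref{prop:cbf:repr}(b), which is the same computation phrased as a Stieltjes inversion; both are fine once the details are supplied. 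Second, your argument, like the paper's, tacitly excludes the degenerate case $m=0$ (pure Brownian motion, $S\equiv0$); there $G_\lambda\equiv0$ and everything is trivial, but it is worth a sentence.
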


Note that the assumptions for part~(a) of the theorem are exactly the same as in Theorem~\ref{th:taux}.

Before the statement of the third main result about the eigenfunctions $F_\lambda$, let us make the following remark. The `sine-cosine' Fourier transform $\tilde{\fourier}$, defined by the formula
\formula{
 \tilde{\fourier} f(\lambda) & = \expr{\int_{-\infty}^\infty f(x) \cos(\lambda x) dx, \int_{-\infty}^\infty f(x) \sin(\lambda x) dx}
}
for $f \in C_c(\R)$ and $\lambda > 0$, extends to a unitary (up to a constant factor $\sqrt{\pi}$) mapping of $L^2(\R)$ onto $L^2((0, \infty)) \oplus L^2((0, \infty))$, in which the free transition operators $P_t$ take a diagonal form. Namely, $P_t$ transforms to a multiplication operator $e^{-t \Psi(\lambda)}$, that is,
\formula{
 \tilde{\fourier} P_t f(\lambda) & = e^{-t \Psi(\lambda)} \tilde{\fourier} f(\lambda) ;
}
see Remark~\ref{rem:free}. The above formula is an explicit spectral decomposition, or \emph{generalised eigenfunction expansion} of $P_t$. The next theorem provides a similar result for transition operators $P^{\cop}_t$ of the killed process. The corresponding transform is given by a similar formula as $\tilde{\fourier}$, with $\cos(\lambda x)$ replaced by $F_\lambda(x)$.

\begin{theorem}
\label{th:spectral}
Suppose that the assumptions of Theorem~\ref{th:flambda} are satisfied, and let $F_\lambda$ be the eigenfunctions from Theorem~\ref{th:flambda}. Define
\formula{
 \Pi_\even f(\lambda) & = \int_{-\infty}^\infty f(x) F_\lambda(x) dx , \\
 \Pi_\odd f(\lambda) & = \int_{-\infty}^\infty f(x) \sin(\lambda x) dx , \\
 \Pi f(\lambda) & = \expr{\Pi_\even f(\lambda), \Pi_\odd f(\lambda)}
}
for $f \in C_c(\cop)$ and $\lambda > 0$. Furthermore, define
\formula{
 \Pi^*(f_1, f_2)(x) & = \int_0^\infty f_1(\lambda) F_\lambda(x) d\lambda + \int_0^\infty f_2(\lambda) \sin(\lambda x) d\lambda
}
for $f_1, f_2 \in C_c((0, \infty))$ and $x > 0$. Then $\pi^{-1/2} \Pi$ and $\pi^{-1/2} \Pi^*$ extend to unitary operators, which map $L^2(\cop)$ onto $L^2((0, \infty)) \oplus L^2((0, \infty))$, and $L^2((0, \infty)) \oplus L^2((0, \infty))$ onto $L^2(\cop)$, respectively. Furthermore, $\Pi \Pi^* = \pi$, $\Pi^* \Pi = \pi$, and
\formula[eq:fullrepresentation]{
 \Pi P^{\cop}_t f(\lambda) = e^{-t \Psi(\lambda)} \Pi f(\lambda) ,
}
for all $f \in L^2(\cop)$. In addition, $f \in L^2(\cop)$ belongs to $\domain(\A_\cop; L^2)$ if and only if $\Psi(\lambda) \Pi f(\lambda)$ is square integrable on $(0, \infty)$, and in this case for $\lambda > 0$,
\formula[eq:fullrepresentation2]{
 \Pi \A_\cop f(\lambda) = -\Psi(\lambda) \Pi f(\lambda) .
}
\end{theorem}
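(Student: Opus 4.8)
The plan is to handle the action of $P^\cop_t$ on even and on odd functions separately: the odd part is classical, and the even part is controlled through the resolvent of the killed process. Throughout I use that (as recalled in Preliminaries) $P^\cop_t$ is a strongly continuous, self-adjoint, sub-Markovian semigroup on $L^2(\cop)$ with generator $\A_\cop$, and I write $U^\cop_q = \int_0^\infty e^{-qt} P^\cop_t\,dt$ for its resolvent, with symmetric kernel $u^\cop_q(x,y)$, and $U_q$, $u_q(x-y)$ for the free process, where $\fourier u_q(\xi) = 1/(q+\Psi(\xi))$ and $u_q(0) = \tfrac{1}{\pi}\int_0^\infty d\xi/(q+\Psi(\xi)) < \infty$ by the integrability hypothesis. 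Since $X_t$ is symmetric, $\ex_0 f(X_s) = 0$ for odd $f$, so the strong Markov property at $\tau_0$ gives $P^\cop_t f = P_t f$ for odd $f$, and on odd functions $\Pi$ reduces, up to the factor $\sqrt{\pi}$, to the Fourier sine transform — the well-known unitary map diagonalising $P_t$ there (see Remark~\ref{rem:free}). Hence the content of the theorem lies in the even part, i.e.\ in $\Pi_\even$ and the eigenfunctions $F_\lambda$.

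First I would record the Hawkes--Kesten-type identity
\formula[eq:sk-hawkes]{
 u^\cop_q(x,y) & = u_q(x-y) - \frac{u_q(x)\,u_q(y)}{u_q(0)} ,
}
a consequence of the strong Markov property at $\tau_0$ together with $\ex_x e^{-q\tau_0} = u_q(x)/u_q(0)$. The heart of the matter is then the spectral representation of the killed resolvent kernel,
\formula[eq:sk-specres]{
 u^\cop_q(x,y) & = \frac{1}{\pi} \int_0^\infty \frac{F_\lambda(x) F_\lambda(y) + \sin(\lambda x)\sin(\lambda y)}{q + \Psi(\lambda)} \, d\lambda , \qquad x, y \in \cop, \ q > 0 .
}
Since the classical diagonalisation of $U_q$ reads $u_q(x-y) = \tfrac{1}{\pi}\int_0^\infty (\cos(\lambda x)\cos(\lambda y) + \sin(\lambda x)\sin(\lambda y))/(q+\Psi(\lambda))\,d\lambda$, identity~\eqref{eq:sk-specres} is, by~\eqref{eq:sk-hawkes}, equivalent to the assertion that $\tfrac{1}{\pi}\int_0^\infty (F_\lambda(x)F_\lambda(y) - \cos(\lambda x)\cos(\lambda y))/(q+\Psi(\lambda))\,d\lambda$ equals the rank-one kernel $-u_q(x)u_q(y)/u_q(0)$. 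I would prove this by inserting the Fourier transform of $F_\lambda$ supplied by Theorem~\ref{th:flambda}, namely $\fourier F_\lambda(\xi) = \cos\thet_\lambda\,\pv\,\Psi'(\lambda)/(\Psi(\lambda)-\Psi(\xi)) + \pi\sin\thet_\lambda(\delta_\lambda(\xi) + \delta_{-\lambda}(\xi))$, and evaluating the $\lambda$-integral by contour deformation and residues, the principal value and the two Dirac masses at $\pm\lambda$ conspiring to produce exactly the factored correction; the definition~\eqref{eq:theta-def} of $\thet_\lambda$ — equivalently $\int_0^\infty \fourier G_\lambda(\xi)\,d\xi = \pi\sin\thet_\lambda$ — and the formula~\eqref{eq:g-def} for $\fourier G_\lambda$ are exactly what make the bookkeeping close. (Alternatively one can test~\eqref{eq:sk-specres} against $f, g \in C_c(\cop)$ using the Laplace transform~\eqref{eq:f-laplace}.) This computation, with its careful handling of principal values and $\delta$-terms, is the step I expect to be the main obstacle.

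Granting~\eqref{eq:sk-specres}, the remainder is soft. Pairing~\eqref{eq:sk-specres} with $f \in C_c(\cop)$ in both variables, $\scalar{q U^\cop_q f, f} = \tfrac{1}{\pi}\int_0^\infty \tfrac{q}{q+\Psi(\lambda)}\,\abs{\Pi f(\lambda)}^2\,d\lambda$ (with $\abs{\Pi f(\lambda)}^2 = \abs{\Pi_\even f(\lambda)}^2 + \abs{\Pi_\odd f(\lambda)}^2$); as $q \to \infty$ the left-hand side tends to $\norm{f}^2$, and since $q/(q+\Psi(\lambda))$ increases to $1$, monotone convergence gives $\norm{\Pi f}^2 = \pi\norm{f}^2$. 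Thus $\Pi$ extends to a bounded operator with $\pi^{-1/2}\Pi$ isometric; verifying $\scalar{\Pi f, g} = \scalar{f, \Pi^* g}$ on test functions shows that $\Pi^*$ is bounded, coincides with the Hilbert-space adjoint of $\Pi$, and $\Pi^*\Pi = \pi$. Next, the symmetry of $P^\cop_t$ together with $P^\cop_t F_\lambda = e^{-t\Psi(\lambda)}F_\lambda$ from Theorem~\ref{th:flambda} gives, for $f \in C_c(\cop)$, $\Pi_\even P^\cop_t f(\lambda) = \int_{-\infty}^\infty (P^\cop_t f)(x) F_\lambda(x)\,dx = \int_{-\infty}^\infty f(x)(P^\cop_t F_\lambda)(x)\,dx = e^{-t\Psi(\lambda)}\Pi_\even f(\lambda)$ (the exchange being permissible since $P^\cop_t$ is symmetric, $f \in L^1$, and $F_\lambda$ is bounded), and the analogue for $\Pi_\odd$ is classical; by density and boundedness this yields~\eqref{eq:fullrepresentation}. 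Finally, to obtain $\Pi\Pi^* = \pi$ it is enough to show $\Pi$ is onto; equivalently, that $\Pi_\even$ maps the even functions of $L^2(\cop)$ onto $L^2((0,\infty))$ (the odd part being the surjective Fourier sine transform). The range of $\Pi_\even$ is closed (as $\Pi$ is bounded below) and invariant under multiplication by $e^{-t\Psi(\lambda)}$, hence — $\Psi$ being strictly increasing — equals $\ind_S L^2((0,\infty))$ for some Borel set $S$; were $S$ not co-null, one would get $\int_{-\infty}^\infty f(x) F_\lambda(x)\,dx = 0$ for every even $f$ and $\lambda$ in a set of positive measure, forcing $F_\lambda \equiv 0$ for such $\lambda$ — impossible, since in $F_\lambda(x) = \sin(\abs{\lambda x} + \thet_\lambda) - G_\lambda(x)$ one has $G_\lambda \in C_0(\R)$ while $\sin(\abs{\lambda x} + \thet_\lambda)$ does not decay, so that $\limsup_{\abs{x} \to \infty} \abs{F_\lambda(x)} = 1$. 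Hence $\pi^{-1/2}\Pi$ is unitary, $\pi^{-1/2}\Pi^*$ is its inverse, and $\Pi\Pi^* = \Pi^*\Pi = \pi$.

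The assertions about $\A_\cop$ follow by transporting it through the unitary $\pi^{-1/2}\Pi$. By~\eqref{eq:fullrepresentation} the transformed semigroup is multiplication by $e^{-t\Psi(\lambda)}$ on $L^2((0,\infty)) \oplus L^2((0,\infty))$, whose generator is multiplication by $-\Psi(\lambda)$ with domain $\set{h : \Psi(\lambda) h(\lambda) \in L^2((0,\infty))}$ — indeed $(e^{-t\Psi(\lambda)} - 1)h(\lambda)/t \to -\Psi(\lambda)h(\lambda)$ in $L^2$ precisely when $\Psi h \in L^2$, by dominated convergence using $\abs{(e^{-t\Psi(\lambda)}-1)/t} \le \Psi(\lambda)$. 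Therefore $f \in \domain(\A_\cop; L^2)$ if and only if $\Psi(\lambda)\Pi f(\lambda)$ is square integrable on $(0,\infty)$, and then $\Pi\A_\cop f(\lambda) = -\Psi(\lambda)\Pi f(\lambda)$, which is~\eqref{eq:fullrepresentation2}.
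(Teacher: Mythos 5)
Your proposal correctly reproduces the soft skeleton of the argument: the even/odd split, the observation $P^\cop_t f = P_t f$ for odd $f$ (so that $\Pi_\odd$ is the ordinary sine transform), the isometry via $q\to\infty$ through monotone convergence applied to the killed resolvent, the surjectivity via range-invariance of $\Pi_\even(L^2_\even)$ under the multiplication semigroup $e^{-t\Psi(\lambda)}$ together with the non-vanishing of $F_\lambda$, and the domain characterisation by transporting $\A_\cop$ through the unitary. All of that is sound once you have the spectral representation of the killed resolvent kernel, your identity~\eqref{eq:sk-specres}.

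But you have not proved~\eqref{eq:sk-specres}, and the route you sketch for it is the real problem. A contour deformation in the $\lambda$-variable, with residues at the zeros of $\Psi(\lambda) - \Psi(\xi)$, presupposes a holomorphic extension of $\Psi$ to a complex neighbourhood of the real line. Theorem~\ref{th:flambda} does \emph{not} assume this: it only requires $\Psi' > 0$ on $(0,\infty)$ and $1/(1+\Psi) \in L^1$. Several of the examples in Section~\ref{sec:examples} — notably $\Psi(\xi) = \xi^2 + 9(1-\cos\xi)$ and the truncated stable process — do not have a clean holomorphic $\Psi$, yet Theorem~\ref{th:spectral} must hold for them. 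Your alternative route (testing~\eqref{eq:sk-specres} against $e_{\xi_1}$, $e_{\xi_2}$) is precisely what the paper does, but what it then needs is an argument with no analyticity hypothesis on $\Psi$. The paper's device is to move the complex analysis from the $\lambda$- or $\xi$-variable into the resolvent variable $z$: the functions $\ph(z) = u_z(0)$, $\ph(\xi,z)$, $\ph(\xi_1,\xi_2,z)$, being integrals of $1/(\Psi(\zeta)+z)$, are Stieltjes functions of $z$ \emph{automatically}, and the whole point of Lemmas~\ref{lem:1}, \ref{lem:2}, \ref{lem:jump}, \ref{lem:key} is to show that $\tilde\ph(\xi_1,\xi_2,z) = \ph(\xi_1,\xi_2,z) - \ph(\xi_1,z)\ph(\xi_2,z)/\ph(z)$ is also a Stieltjes function of $z$ (this needs only monotonicity of $\Psi$ and a nontrivial symmetrisation argument), and then to read off its representing measure as $\Pi_\even e_{\xi_1}(\lambda)\,\Pi_\even e_{\xi_2}(\lambda)$ via the boundary-limit formulae of Lemma~\ref{lem:jump}. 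That chain of lemmas is the substantive content of the proof, and it has no counterpart in your proposal. Until that piece is supplied, your identity~\eqref{eq:sk-specres} — which you yourself flag as "the main obstacle" — remains unestablished, and the rest of the argument has nothing to stand on.

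A smaller point: in the surjectivity step, "$\int f(x)F_\lambda(x)\,dx = 0$ for every even $f$ and $\lambda$ in a set of positive measure" needs a little care, since the exceptional null set of $\lambda$'s a priori depends on $f$. One should restrict to a countable dense family $f_n \in C_c(\cop)$, intersect the co-null sets, and use continuity of $\lambda \mapsto \Pi_\even f_n(\lambda)$ to evaluate at a density point of $S^c$; then $F_{\lambda_0}\equiv 0$ as a continuous function, contradicting its non-decay. This is repairable, but worth stating.
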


In a rather standard way, explicit spectral representation for $P^\cop_t$ yields a formula for the kernel of the operator, that is, for the transition density.

\begin{corollary}
\label{cor:pdt}
Suppose that the assumptions of Theorem~\ref{th:flambda} are satisfied, and $2 \xi \Psi''(\xi) \le \Psi'(\xi)$ for all $\xi > 0$. Then the \emph{transition density} of the process killed upon hitting the origin (that is, the kernel function of $P^\cop_t$) is given by
\formula[eq:pdt]{
\begin{aligned}
 p^{\cop}_t(x, y) & = \frac{1}{\pi} \int_0^\infty e^{-t \Psi(\lambda)} (F_\lambda(x) F_\lambda(y) + \sin(\lambda x) \sin(\lambda y)) d\lambda \\
 & = \frac{p_t(x - y) - p_t(x + y)}{2} + \frac{1}{\pi} \int_0^\infty e^{-t \Psi(\lambda)} F_\lambda(x) F_\lambda(y) d\lambda .
\end{aligned}
}
\end{corollary}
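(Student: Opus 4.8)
The plan is to read the kernel off from the spectral representation of Theorem~\ref{th:spectral} and then rewrite it by comparison with the analogous representation of the free semigroup. Since $\pi^{-1/2}\Pi$ and $\pi^{-1/2}\Pi^*$ are unitary and $\Pi^*\Pi = \pi$, the intertwining relation \eqref{eq:fullrepresentation} gives $P^\cop_t f = \pi^{-1}\Pi^*(e^{-t\Psi(\cdot)}\Pi f)$ for $f\in L^2(\cop)$. Writing out $\Pi^*$ and $\Pi$ and inserting the definitions of $\Pi_\even$ and $\Pi_\odd$, this reads, for $f\in C_c(\cop)$,
\[
 P^\cop_t f(x) = \frac{1}{\pi}\int_0^\infty e^{-t\Psi(\lambda)}\expr{F_\lambda(x)\int_\cop f(y)F_\lambda(y)\,dy + \sin(\lambda x)\int_\cop f(y)\sin(\lambda y)\,dy}d\lambda .
\]
If the order of the $\lambda$- and $y$-integrations may be exchanged, the inner bracket becomes exactly the first expression in \eqref{eq:pdt}, which is then the kernel of $P^\cop_t$, that is, the transition density $p^\cop_t(x,y)$. (Alternatively one could derive $p^\cop_t$ probabilistically via Hunt's formula and match it with the spectral formula, but the route through Theorem~\ref{th:spectral} is the direct one.)

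The first step is to justify this exchange by Fubini's theorem, and this is the only place where the extra hypothesis $2\xi\Psi''(\xi)\le\Psi'(\xi)$ is used. By the very definition \eqref{eq:theta-def}, $\thet_\lambda=\arctan(\cdots)$; under the present hypothesis Theorem~\ref{th:glambda}(a) gives $\thet_\lambda\in[0,\pi/2)$ and $\abs{G_\lambda(x)}\le\sin\thet_\lambda<1$ uniformly in $\lambda$ and $x$, whence $\abs{F_\lambda(x)}\le 1+\sin\thet_\lambda\le 2$ by \eqref{eq:f-def}. Moreover $\Psi$ is increasing on $(0,\infty)$ and, since $1/(1+\Psi(\xi))$ is integrable, $\Psi(\lambda)\to\infty$ and $(1+\Psi(\lambda))e^{-t\Psi(\lambda)}\to 0$, so $e^{-t\Psi(\lambda)}$ is integrable on $(0,\infty)$ for every $t>0$. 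Hence the integrand above is dominated on $(0,\infty)\times\cop$ by $\tfrac{5}{\pi}e^{-t\Psi(\lambda)}\abs{f(y)}$, which is integrable (recall $f\in C_c(\cop)$), Fubini applies, and we obtain $P^\cop_t f(x)=\int_\cop p^\cop_t(x,y)f(y)\,dy$ with $p^\cop_t(x,y)=\tfrac1\pi\int_0^\infty e^{-t\Psi(\lambda)}(F_\lambda(x)F_\lambda(y)+\sin(\lambda x)\sin(\lambda y))\,d\lambda$. The same bound shows this function is bounded and, using continuity of $\lambda\mapsto\thet_\lambda$ and joint continuity of $(\lambda,x)\mapsto F_\lambda(x)$, jointly continuous; being the kernel of $P^\cop_t$, it is the transition density.

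For the second equality I would invoke the sine-cosine spectral representation of the free semigroup recalled before Theorem~\ref{th:spectral} (see Remark~\ref{rem:free}). Since $e^{-t\Psi}$ is integrable, $p_t$ is the bounded continuous inverse Fourier transform of $e^{-t\Psi(\xi)}$, and the identity $\tilde{\fourier} P_t f(\lambda)=e^{-t\Psi(\lambda)}\tilde{\fourier} f(\lambda)$, treated by the same Fubini argument (now the trivial bound $\abs{\cos(\lambda x)}\le 1$ suffices), yields
\[
 p_t(x-y)=\frac1\pi\int_0^\infty e^{-t\Psi(\lambda)}\expr{\cos(\lambda x)\cos(\lambda y)+\sin(\lambda x)\sin(\lambda y)}d\lambda=\frac1\pi\int_0^\infty e^{-t\Psi(\lambda)}\cos(\lambda(x-y))\,d\lambda
\]
for all $x,y\in\R$, by evenness of $\Psi$. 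Subtracting the same identity with $y$ replaced by $-y$ and dividing by $2$ gives $\tfrac1\pi\int_0^\infty e^{-t\Psi(\lambda)}\sin(\lambda x)\sin(\lambda y)\,d\lambda=\tfrac12(p_t(x-y)-p_t(x+y))$, and inserting this into the first line of \eqref{eq:pdt} produces the second line. The only genuine difficulty is the uniform-in-$\lambda$ control of the eigenfunctions $F_\lambda$ needed to run Fubini; the rest is bookkeeping and elementary trigonometry.
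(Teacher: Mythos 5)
Your proof is correct and follows essentially the same route as the paper's: deduce the kernel from the identity $P^\cop_t f = \pi^{-1}\Pi^*(e^{-t\Psi}\Pi f)$ given by Theorem~\ref{th:spectral}, use the uniform bound on $F_\lambda$ from Theorem~\ref{th:glambda}(a) together with integrability of $e^{-t\Psi}$ to justify Fubini, and recover the second line by writing the sine product in terms of the free transition density. (Your antisymmetrization of $p_t(x\mp y)$ is just a reordering of the paper's product-to-sum step; the content is identical.)
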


Formula~\eqref{eq:pdt}, however, is of rather limited applications due to many cancellations in the integral with two oscillatory terms. Nevertheless, it is one of the very few explicit descriptions of the transition density of a killed L\'evy process.

By symmetry, we have $\pr_x(\tau_0 > t) = \pr_0(\tau_x > t)$. Hence, Theorem~\ref{th:taux} follows trivially from the following result. In the theorem, an extra condition on $\Psi$ is required in order to use Theorem~\ref{th:glambda}(a).

\begin{theorem}
\label{th:tau}
Suppose that the assumptions of Theorem~\ref{th:flambda} are satisfied, and $2 \xi \Psi''(\xi) \le \Psi'(\xi)$ for all $\xi > 0$. Then
\formula[eq:tau]{
 \pr_x(t < \tau_0 < \infty) & = \frac{1}{\pi} \int_0^\infty \frac{\cos \thet_\lambda e^{-t \Psi(\lambda)} \Psi'(\lambda) F_\lambda(x)}{\Psi(\lambda)} \, d\lambda
}
for $t > 0$ and almost all $x \in \cop$.
\end{theorem}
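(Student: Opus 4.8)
The plan is to pass to the Laplace transform in the time variable $t$ and to reduce everything to a resolvent-type identity for $\ex_x[e^{-q\tau_0};\tau_0<\infty]$. Fix $q>0$. By Theorem~\ref{th:glambda}(a) we have $\thet_\lambda\in[0,\pi/2)$ and $|F_\lambda(x)|\le 1+\sin\thet_\lambda\le 2$ uniformly in $\lambda$ and $x$; this uniform bound is precisely what the extra hypothesis $2\xi\Psi''(\xi)\le\Psi'(\xi)$ is needed for. A Fubini argument then identifies the Laplace transform of the right-hand side of~\eqref{eq:tau} as $\frac1\pi\int_0^\infty\cos\thet_\lambda\,\Psi'(\lambda)\,F_\lambda(x)\,[\Psi(\lambda)(q+\Psi(\lambda))]^{-1}\,d\lambda$, while that of the left-hand side is $\frac1q\bigl(\pr_x(\tau_0<\infty)-\ex_x[e^{-q\tau_0};\tau_0<\infty]\bigr)$. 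Both sides of~\eqref{eq:tau} are continuous in $t$, so it suffices to match these two Laplace transforms for all $q>0$; after the partial-fraction split $[\Psi(\lambda)(q+\Psi(\lambda))]^{-1}=q^{-1}(\Psi(\lambda)^{-1}-(q+\Psi(\lambda))^{-1})$ this amounts to the identity
\[
 \ex_x\bigl[e^{-q\tau_0};\tau_0<\infty\bigr]=\frac1\pi\int_0^\infty\frac{\cos\thet_\lambda\,\Psi'(\lambda)\,F_\lambda(x)}{q+\Psi(\lambda)}\,d\lambda ,
\]
valid for every $q>0$ and almost every $x$, together with its $q\to0^+$ limit, which should read $\pr_x(\tau_0<\infty)=\frac1\pi\int_0^\infty\cos\thet_\lambda\,\Psi'(\lambda)\,F_\lambda(x)\,\Psi(\lambda)^{-1}\,d\lambda$. (The $\lambda$-integrals above are only conditionally convergent at $\infty$, because $F_\lambda$ oscillates; I would make rigorous sense of them, and of all the interchanges below, by first passing to Laplace transforms in the variable $x$, where everything converges absolutely.)

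For the left-hand side I would invoke the classical first-passage identity $\ex_x[e^{-q\tau_0};\tau_0<\infty]=u^q(x)/u^q(0)$. Here $u^q(y)=\frac1\pi\int_0^\infty(q+\Psi(\xi))^{-1}\cos(\xi y)\,d\xi$ is the $q$-potential kernel, a bounded continuous function because $1/(1+\Psi)$, and hence $1/(q+\Psi)$, is integrable; that same hypothesis makes the point $0$ non-polar, hence regular for itself, so that the first-passage decomposition $p_t(x)=\int_0^t\pr_x(\tau_0\in ds)\,p_{t-s}(0)$ holds and its Laplace transform in $t$ gives the identity. It remains to prove the purely analytic statement $u^q(x)/u^q(0)=\frac1\pi\int_0^\infty\cos\thet_\lambda\,\Psi'(\lambda)\,F_\lambda(x)\,(q+\Psi(\lambda))^{-1}\,d\lambda$ for almost every $x$. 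Both sides are bounded even functions of $x$, so it is enough to show their Laplace transforms $\xi\mapsto\int_{-\infty}^\infty(\,\cdot\,)\,e^{-\xi|x|}\,dx$ agree for every $\xi>0$. On the right one interchanges the $\lambda$- and $x$-integrations — legitimate once one has, from~\eqref{eq:f-laplace} and the bound $|G_\lambda(x)|\le\sin\thet_\lambda$, enough decay in $\lambda$ of $\int_{-\infty}^\infty F_\lambda(x)e^{-\xi|x|}dx$ to integrate it absolutely against $\Psi'(\lambda)/(q+\Psi(\lambda))$ — and then substitutes the explicit formula~\eqref{eq:f-laplace}; on the left one uses $\int_{-\infty}^\infty\cos(\zeta x)e^{-\xi|x|}dx=2\xi/(\xi^2+\zeta^2)$. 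A partial-fraction decomposition in the variable $\Psi(\lambda)$, an interchange of the two resulting outer integrals, and — this is the crucial point — the defining formula~\eqref{eq:theta-def} for $\thet_\lambda$, which is exactly what makes the $\sin\thet_\lambda$-terms cancel, should collapse both sides to $\frac2\pi\int_0^\infty\xi\,[(\xi^2+\zeta^2)(q+\Psi(\zeta))]^{-1}\,d\zeta$ divided by $u^q(0)=\frac1\pi\int_0^\infty(q+\Psi(\zeta))^{-1}\,d\zeta$, which is the Laplace transform of $u^q(x)/u^q(0)$.

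Finally I would let $q\to0^+$: monotone convergence on the left, and on the right passage through the Laplace transform in $x$ (where dominated convergence applies, with the transient case $u^0(0)<\infty$ and the recurrent case $u^0(0)=\infty$ treated separately), yielding the stated formula for $\pr_x(\tau_0<\infty)$. Feeding both formulae back into the first paragraph and using the partial-fraction split gives $\int_0^\infty e^{-qt}\pr_x(t<\tau_0<\infty)\,dt=\frac1\pi\int_0^\infty\cos\thet_\lambda\,\Psi'(\lambda)\,F_\lambda(x)\,[\Psi(\lambda)(q+\Psi(\lambda))]^{-1}\,d\lambda$ for all $q>0$, and Laplace inversion in $q$ (valid since both sides are continuous in $t$) delivers~\eqref{eq:tau} for every $t>0$ and almost every $x\in\cop$. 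Theorem~\ref{th:taux} then follows from the symmetry $\pr_x(\tau_0>t)=\pr_0(\tau_x>t)$.

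The main obstacle is the analytic identity of the second paragraph: the several interchanges of integration have to be justified uniformly in $\lambda$ (which is exactly why the hypothesis $2\xi\Psi''\le\Psi'$ — i.e.\ Theorem~\ref{th:glambda}(a) and the uniform bound $|F_\lambda(x)|\le 2$ — is imposed here), and the partial-fraction and principal-value bookkeeping built around~\eqref{eq:theta-def} that makes the two Laplace transforms collapse to a common expression is delicate. A secondary difficulty is the behaviour of the integrals near $\lambda=0$, where $\Psi'/\Psi$ is unbounded, together with the separate treatment of the transient and recurrent regimes in the limit $q\to0^+$.
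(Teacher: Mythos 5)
Your high-level strategy matches the paper's: take the double Laplace transform (in $t$ via $q$, in $x$ via $\xi$), invoke the resolvent identity $\ex_x e^{-q\tau_0}=u_q(x)/u_q(0)$, reduce the claim to an analytic identity for those double transforms, and conclude by uniqueness. You also correctly identify the role of the hypothesis $2\xi\Psi''\le\Psi'$ (the uniform bound $|F_\lambda|\le 2$ from Theorem~\ref{th:glambda}(a), needed for the Fubini passages and for making sense of the possibly only conditionally convergent $\lambda$-integral), and you correctly see that the transient case needs a separate $q\to0^+$ analysis.

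The gap is in the step you yourself flag as ``delicate'': showing that
\[
 \frac{2}{\xi}\,\frac{\ph(\xi,q)}{\ph(q)}
 =\frac{1}{\pi}\int_0^\infty\frac{\Psi'(\lambda)}{q+\Psi(\lambda)}
 \Bigl(\cos\thet_\lambda\int_{-\infty}^\infty F_\lambda(x)e^{-\xi|x|}dx\Bigr)d\lambda .
\]
Your plan --- substitute \eqref{eq:f-laplace}, do a partial-fraction decomposition in $\Psi(\lambda)$, interchange the $\lambda$- and $\zeta$-integrals, and let \eqref{eq:theta-def} cancel the $\sin\thet_\lambda$ terms --- does not account for the structure of the inner integral. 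Using \eqref{eq:piexp}, $\cos\thet_\lambda\int F_\lambda e^{-\xi|x|}dx=\frac{2}{\xi}\,\frac{K_\lambda L_\lambda(\xi)-K_\lambda(\xi)}{1+K_\lambda^2}$, where $K_\lambda$, $K_\lambda(\xi)$ are principal-value integrals of $\Psi'(\lambda)/(\Psi(\lambda)-\Psi(\zeta))$. The factor $(1+K_\lambda^2)^{-1}=\cos^2\thet_\lambda$ is \emph{nonlinear} in the defining $\pv$-integral, so it cannot be disposed of by a partial-fraction or a Fubini-type interchange; there is no bookkeeping of $\pvint\pvint$'s that linearizes it. The paper resolves exactly this obstacle by complex-analytic means: Lemma~\ref{lem:1} shows $\ph(\xi,z)/\ph(z)$ is a Stieltjes function of $z$; Lemma~\ref{lem:jump} identifies $\frac{K_\lambda L_\lambda(\xi)-K_\lambda(\xi)}{1+K_\lambda^2}$ as $-\Psi'(\lambda)\,\im\bigl(\ph^+(\xi,-\Psi(\lambda))/\ph^+(-\Psi(\lambda))\bigr)$, the density of its representing measure in the spectral variable $s=\Psi(\lambda)$ (the denominator $1+K_\lambda^2$ appears naturally as $|\Psi'(\lambda)\ph^+(-\Psi(\lambda))|^2$); and Proposition~\ref{prop:cbf:repr}(b) then turns the $\lambda$-integral into $\frac{\ph(\xi,q)}{\ph(q)}$ minus the $q\to0^+$ limit, in one stroke. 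This auxiliary machinery is the missing ingredient in your proposal, not a mere technical simplification: without it, the ``collapse'' you expect does not follow from partial fractions and $\thet_\lambda$'s definition alone.

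Two smaller points. First, the paper does not separately prove an identity for $\pr_x(\tau_0<\infty)$ --- it instead subtracts the Laplace transforms of $\pr_x(\tau_0\le t)$ and $\pr_x(\tau_0<\infty)$ in one computation, with $\lim_{\eps\to0^+}\ph(\xi,\eps)/\ph(\eps)$ absorbing both the recurrent and transient cases, so the dichotomy you anticipate is handled uniformly. Second, the integral in \eqref{eq:tau} is not shown to be absolutely convergent; the paper establishes that it is well defined (possibly infinite) by the decomposition \eqref{eq:tau-decomposition}, where the $\sin(|\lambda x|+\thet_\lambda)-\sin\thet_\lambda$ part is bounded by $2|x|e^{-t\Psi(\lambda)}$ using $\Psi'(\lambda)\le 2\Psi(\lambda)/\lambda$, and the $G_\lambda(0)-G_\lambda(x)$ part is nonnegative by Theorem~\ref{th:glambda}(a). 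Your bound $|F_\lambda|\le 2$ alone does not control the $\Psi'(\lambda)/\Psi(\lambda)$ singularity at $\lambda=0$; you would need this refined decomposition too.
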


Since $\pr_x(\tau_0 > t) = \int_{\cop} p^\cop_t(x, y) dy$, one could naively try to derive formula~\eqref{eq:tau} by integrating~\eqref{eq:pdt} over $y \in \cop$. Heuristically, changing the order of integration and substituting $\fourier F_\lambda(0) = \cos \thet_\lambda \Psi'(\lambda) / \Psi(\lambda)$ for the integral $\int_{-\infty}^\infty F_\lambda(x) dx$ (which is \emph{not} absolutely convergent, or even convergent in the usual way) would yield~\eqref{eq:tau} with $\pr_x(t < \tau_0)$ instead of $\pr_x(t < \tau_0 < \infty)$. Hence, during this illegitimate use of Fubini, $\pr_0(\tau_0 = \infty)$ is lost.

Relatively little is known about the distribution of the hitting time of single points, $\tau_x$. In the general case, the double integral transform (Fourier in space, Laplace in time) is well-known,
\formula{
 \int_\R e^{i \xi x} \ex_x e^{-z \tau_0} dx & = \frac{c(z)}{z + \Psi(\xi)}
}
for $z > 0$ and $\xi \in \R$, where $c(z)$ is the normalisation constant, chosen so that the integral of the right-hand side is $1$ (see Preliminaries). Hence,
\formula{
 \ex_x e^{-z \tau_0} & = \frac{u_z(x)}{u_z(0)}
}
for $z > 0$ and $x \in \R$, where $u_z(x)$ is the resolvent (or $z$-potential) kernel for $X_t$. However, $u_z(x)$ is typically given by an oscillatory integral (namely, the inverse Fourier transform of $1 / (z + \Psi(\xi))$, and therefore it is not easy to invert the Laplace transform in time, even for symmetric $\alpha$-stable processes. Some asymptotic analysis of $\pr_x(\tau_0 > t)$ (and many other related objects) can be found, for example, in~\cite{bib:bgr61, bib:k69, bib:yyy09}. Some more detailed results in this area have been obtained for spectrally negative L{\'e}vy processes, see, for example, \cite{bib:d91} and Section~46 in~\cite{bib:s99}. For totally asymmetric $\alpha$-stable processes, a series representation for $\pr_x(\tau_0 > t)$ was obtained in~\cite{bib:p08}.

The hitting time of a single point is closely related to questions about local time and excursions of a L\'evy process away from the origin, see~\cite{bib:c10, bib:y10}. Similar questions arise when killing a L\'evy process is replaced by penalizing it whenever it hits $0$, cf.~\cite{bib:yyy09:penalising}. In fact, using the same method, one can find generalized eigenfunctions for the transition operators of the penalized semigroup. Finally, spectral theory for the half-line has been succesfully applied in the study of processes killed upon leaving the interval~\cite{bib:kkm12, bib:kkms10, bib:k12} (see also~\cite{bib:bk04, bib:bk09}) and higher-dimensional domains~\cite{bib:fg11}. One can expect similar applications of the spectral theory for $\cop$, developed in the present article.

The remaining part of the article is divided into four sections. In Preliminaries, we describe the background on Schwartz distributions, complete Bernstein and Stieltjes functions, and L{\'e}vy processes and their generators. Section~\ref{sec:flambda} contains the derivation of the formula for the eigenfunctions $F_\lambda$. The generalised eigenfunction expansion and formulae for the transition density and the distribution of the hitting time are proved in Section~\ref{sec:spectral}. Finally, examples are studied in Section~\ref{sec:examples}.

%
%                            ---------- o ----------
%

\section{Preliminaries}

\subsection{Schwartz distributions}

Some background on the theory of Schwartz distributions is required for the proof of Theorem~\ref{th:flambda}. A general reference here is~\cite{bib:v02}; a closely related setting is described with more details in~\cite{bib:k10}.

The class of Schwartz functions in $\R$ is denoted by $\schwartz$, and $\schwartz'$ is the space od tempered distributions in $\R$. If $\ph \in \schwartz$ and $F \in \schwartz'$, we write $\tscalar{F, \ph}$ for the pairing of $F$ and $\ph$. The Fourier transform of $\ph \in \schwartz$ is defined by $\fourier \ph(\xi) = \int_{-\infty}^\infty e^{i \xi x} \ph(x) dx$. For $F \in \schwartz'$, $\fourier F$ is the tempered distribution satisfying $\tscalar{\fourier F, \ph} = \tscalar{F, \fourier \ph}$ for all $\ph \in \schwartz$. This definition extends the usual definition of the Fourier transform of $L^1(\R)$ or $L^2(\R)$ functions and finite signed measures on $\R$.

The convolution of $\ph_1, \ph_2 \in \schwartz$ is defined as usual by $\ph_1 * \ph_2(x) = \int_{-\infty}^\infty \ph_1(y) \ph_2(x - y) dy$. When $\ph \in \schwartz$ and $F \in \schwartz'$, then $F * \ph$ is the infinitely smooth function defined by
\formula{
 F * \ph(x) & = \tscalar{F, \ph_x} , && \text{where} & \ph_x(y) & = \ph(x - y) .
}
The convolution does not extend continuously to entire space $\schwartz'$. We say that $F_1, F_2 \in \schwartz'$ are \emph{$\schwartz'$-convolvable} if for all $\ph_1, \ph_2 \in \schwartz$, the functions $F_1 * \ph_1$ and $F_2 * \ph_2$ are convolvable in the usual sense, i.e. the integral $\int_{-\infty}^\infty (F_1 * \ph_1)(y) (F_2 * \ph_2)(x - y) dy$ exists for all $x$. When this is the case, the \emph{$\schwartz'$-convolution} $F_1 \conv F_2$ is the unique distribution $F$ satisfying $F * \ph_1 * \ph_2 = (F_1 * \ph_1) * (F_2 * \ph_2)$ for $\ph_1, \ph_2 \in \schwartz$. There are other non-equivalent definitions of the convolution of distributions; see~\cite{bib:v02}.

The \emph{support} of a distribution $F$ is the smallest closed set $\supp F$ such that $\tscalar{F, \ph} = 0$ for all $\ph \in \schwartz$ such that $\ph(x) = 0$ for $x \in \supp F$. If any of the tempered distributions $F_1$, $F_2$ has compact support, or if $F_1 * \ph$ is bounded and $F_2 * \ph_2$ is integrable for all $\ph \in \schwartz$, then $F_1$ and $F_2$ are automatically $\schwartz'$-convolvable. Note that $\schwartz'$-convolution is \emph{not} associative in general.

If distributions $F_1$, $F_2$ are $\schwartz'$-convolvable, then $\fourier (F_1 \conv F_2) = \fourier F_1 \cdot \fourier F_2$ (the \emph{exchange formula}), where the multiplication of distributions extends standard multiplication of functions in an appropriate manner. Below only a few special cases are discussed, and for the general notion of multiplication of distributions, we refer the reader to~\cite{bib:v02}.

When $F_1$ is a measure and $F_2$ is a function, then their multiplication $F_1 \cdot F_2$ is the measure $F_2(x) F_1(dx)$, as expected. Next example requires the following definition. By writing $F_1 = \pv (1/(x - x_0))$ we mean that
\formula{
 \scalar{F_1, \ph} & = \pvint_{-\infty}^\infty \frac{\ph(x)}{x - x_0} \, dx = \lim_{\eps \to 0} \int_{\R \setminus [x_0-\eps, x_0+\eps]} \frac{\ph(x)}{x} \, dx ;
}
the \emph{principal value integral} $\pvint$ is defined by the right-hand side. When $F_1 = \pv (1 / (x - x_0))$ and $F_2$ is a function vanishing at $x_0$ and differentiable at $x_0$, then $F_1 \cdot F_2$ is the ordinary function $F_2(x) / (x - x_0)$. The third example is the extension of the previous one. Let $f$ be a function vanishing at a finite number of points $x_1$, $x_2$, ..., $x_n$, continuously differentiable in a neighborhood of each $x_j$ with $f'(x_j) \ne 0$, and such that $1 / f(x)$ is bounded outside any neighborhood of $\{x_1, ..., x_n\}$. In this case, $F_1 = \pv (1/f(x))$ is formally defined by
\formula{
 \scalar{F_1, \ph} & = \pvint_{-\infty}^\infty \frac{\ph(x)}{f(x)} \, dx = \lim_{\eps \to 0} \int_{\R \setminus \bigcup_{j = 1}^n [x_j-\eps, x_j+\eps]} \frac{\ph(x)}{f(x)} \, dx .
}
Clearly, $F_1 = \sum_{j = 1}^n f'(x_j) \pv (1 / (x - x_j)) + F$, where $F$ is a genuine function. Hence, by the previous example, we obtain the following result: when $F_1 = \pv (1/f(x))$ is as above and $F_2$ is a function which vanishes at each $x_j$ and is differentiable at each $x_j$, then $F_1 \cdot F_2$ is the function $F_2(x) / f(x)$.

The Laplace transform of a function $F$ on $[0, \infty)$ is defined by $\laplace F(\xi) = \int_0^\infty F(x) e^{-\xi x} dx$. For a (possibly signed) Radon measure $m$ on $[0, \infty)$, we have $\laplace m(\xi) = \int_0^\infty e^{-\xi x} m(dx)$.

\subsection{Stieltjes functions and complete Bernstein functions}

A general reference here is~\cite{bib:ssv10}. A function $f(x)$ is said to be a \emph{complete Bernstein function} (CBF in short) if
\formula[eq:cbf]{
 f(x) & = c_1 + c_2 x + \frac{1}{\pi} \int_0^\infty \frac{x}{x + s} \, \frac{m(ds)}{s} , && x > 0 ,
}
where $c_1 \ge 0$, $c_2 \ge 0$, and $m$ is a Radon measure on $(0, \infty)$ such that $\int \min(s^{-1}, s^{-2}) m(ds) < \infty$. The right-hand side clearly extends to a holomorphic function of $x \in \C \setminus (-\infty, 0]$, and we often identify $f$ with its holomorphic extension. A function $f(x)$ is a \emph{Stieltjes function} if
\formula[eq:stieltjes]{
 f(x) & = \frac{\tilde{c}_1}{x} + \tilde{c}_2 + \frac{1}{\pi} \int_0^\infty \frac{1}{x + s} \, \tilde{m}(ds) , && x > 0 ,
}
where $\tilde{c}_1 \ge 0$, $\tilde{c}_2 \ge 0$, and $\tilde{m}$ is a Radon measure on $(0, \infty)$ such that $\int \min(1, s^{-1}) m(ds) < \infty$. Below we collect the properties of Stieltjes and complete Bernstein functions used in the article.

\begin{proposition}[see~\cite{bib:ssv10}]
\label{prop:cbf:defs}
\begin{enumerate}
\item[(a)] A function $f$ is a CBF if and only if $f(z) \ge 0$ for $z > 0$ and either $f$ is constant, or $f$ extends to a holomorphic function in $\C \setminus (-\infty, 0]$, which leaves the upper and the lower complex half-planes invariant.
\item[(b)] A function $f$ is a Stieltjes function if and only if $f(z) \ge 0$ for $z > 0$ and either $f$ is constant, or $f$ extends to a holomorphic function in $\C \setminus (-\infty, 0]$, which swaps the upper and the lower complex half-planes.
\item[(c)] If $f$ is positive, then the following four conditions are mutually equivalent: $f(x)$ is a CBF, $x / f(x)$ is a CBF, $1 / f(x)$ is a Stieltjes function, $f(x) / x$ is a Stieltjes function.
\item[(d)] A function $f$ is a Stielties function with representation~\eqref{eq:stieltjes} if and only if $f$ is the Laplace transform of the measure $\tilde{c}_2 \delta_0(ds) + (\tilde{c}_1 + \laplace \tilde{m}(s)) ds$.\qed
\end{enumerate}
\end{proposition}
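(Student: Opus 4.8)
The four assertions are interlocked, and the plan is to establish (a) and (b) first from the Herglotz--Nevanlinna representation of Pick functions, then deduce (c) from the resulting analytic characterisations by tracking arguments in the slit plane, and finally verify (d) by a direct Fubini--Tonelli computation. Throughout, $\C_+$ denotes the open upper half-plane $\set{z : \im z > 0}$ and $\C_-$ the open lower half-plane.

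\textbf{Parts (a) and (b).} For the ``only if'' direction of (a), assume \eqref{eq:cbf}; using $\frac{z}{z + s} = 1 - \frac{s}{z + s}$ and $\im \frac{z}{z + s} = \frac{s \, \im z}{|z + s|^2}$, one sees that for $z \in \C_+$ each term on the right-hand side has non-negative imaginary part, strictly positive unless $c_2 = 0$ and $m = 0$; since $|z + s| \ge c (1 + s)$ on compact subsets of $\C \setminus (-\infty, 0]$, the integral converges there locally uniformly by $\int \min(s^{-1}, s^{-2}) m(ds) < \infty$, yielding holomorphy, and the reflection $\overline{f(\bar z)} = f(z)$ handles the lower half-plane. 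For the converse, let $f$ be non-constant, holomorphic on $\C \setminus (-\infty, 0]$, with $f(\C_+) \sub \cl{\C_+}$ and $f \ge 0$ on $(0, \infty)$; then $f|_{\C_+}$ is a Pick function, so $f(z) = a + b z + \int_{\R} \expr{\frac{1}{t - z} - \frac{t}{1 + t^2}} \mu(dt)$ with $a \in \R$, $b \ge 0$, and $\int (1 + t^2)^{-1} \mu(dt) < \infty$. Because $f$ continues holomorphically and real-valued across $(0, \infty)$, the Stieltjes inversion formula (the representing measure is the vague limit of $\frac{1}{\pi} \im f(t + i \eps) \, dt$) forces $\mu$ to vanish on $(0, \infty)$, i.e. $\supp \mu \sub (-\infty, 0]$; the substitution $t = -s$ then turns the formula into \eqref{eq:cbf} after matching $c_1, c_2, m$. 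Part (b) is parallel: a non-constant Stieltjes function maps $\C_+$ into $\cl{\C_-}$, so $-f$ is a Pick function holomorphic and real across $(0, \infty)$; its linear coefficient vanishes (a Stieltjes function stays bounded at $+\infty$), its representing measure again lives on $(-\infty, 0]$, and the same substitution gives \eqref{eq:stieltjes} — whereas conversely \eqref{eq:stieltjes} manifestly swaps the half-planes.

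\textbf{Part (c).} Here I would only use the analytic descriptions from (a) and (b). A non-constant CBF maps $\C_+$ into $\C_+$ (open mapping theorem), hence never vanishes there, and reading off \eqref{eq:cbf}, for $z = r e^{i \theta}$ with $0 < \theta < \pi$ every summand has argument in $[0, \theta]$, so $\Arg f(z) \in [0, \theta]$; thus $\Arg(z / f(z)) \in [0, \theta]$ and $\Arg(f(z) / z) \in [-\theta, 0]$, which together with positivity on $(0, \infty)$ and holomorphy on the slit plane shows $x / f(x)$ is a CBF and $f(x) / x$ is a Stieltjes function. The symmetric computation from \eqref{eq:stieltjes} shows that a non-constant Stieltjes function $g$ satisfies $\Arg g(z) \in [-\theta, 0]$, whence $z g(z)$ and $1 / g(z)$ are CBFs. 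Composing these elementary implications (for instance $1 / f = (x / f) / x$ and $f = 1 / (1 / f)$) closes the cycle and yields the fourfold equivalence.

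\textbf{Part (d).} From \eqref{eq:stieltjes} one computes the Laplace transform of $\tilde c_2 \delta_0(ds) + (\tilde c_1 + \laplace \tilde m(s)) \, ds$ termwise: $\laplace(\delta_0)(x) = 1$, $\laplace(1)(x) = 1 / x$, and — the integrand being non-negative — Tonelli's theorem gives $\laplace(\laplace \tilde m)(x) = \int_0^\infty e^{-x s} \int_0^\infty e^{-s u} \tilde m(du) \, ds = \int_0^\infty \frac{\tilde m(du)}{x + u}$; summing reproduces \eqref{eq:stieltjes} (the factor $1 / \pi$ being absorbed into $\tilde m$), and the converse is the same computation read backwards. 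The one genuinely non-routine step is the converse direction of (a) and (b): locating the Herglotz measure of $f|_{\C_+}$ on $(-\infty, 0]$, which hinges on Stieltjes inversion together with the hypothesis that $f$ extends holomorphically and real-valued across $(0, \infty)$; everything else is direct verification or elementary bookkeeping of arguments in the slit plane.
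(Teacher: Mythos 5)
The paper does not prove Proposition~\ref{prop:cbf:defs}: it is stated with a \qed and cited from~\cite{bib:ssv10}, so there is no in-paper argument to compare against. Your approach — Herglotz--Nevanlinna (Pick) representation plus Stieltjes inversion for (a) and (b), argument tracking in the slit plane for (c), and Tonelli for (d) — is the standard route and is essentially the proof given in the cited reference, so the overall architecture is sound.

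There is, however, one concretely circular step. In the converse direction of (b) you write that the linear coefficient $b$ of the Pick function $-f$ ``vanishes (a Stieltjes function stays bounded at $+\infty$).'' But being a Stieltjes function is precisely what you are trying to prove about $f$, so this justification begs the question. The correct argument uses the standing hypothesis $f \ge 0$ on $(0,\infty)$: for real $x>0$ we have $-f(x) \le 0$, while in the Pick representation of $-f$ (with representing measure already localised on $(-\infty,0]$) the integrand $\frac{1}{t-x}-\frac{t}{1+t^2}$ increases monotonically in $x$ to the nonnegative limit $-\frac{t}{1+t^2}$, so the integral term is bounded below as $x\to\infty$; if $b>0$ then $-f(x)\to+\infty$, contradicting $-f\le 0$. (The same monotone-convergence argument also shows $\int \frac{|t|}{1+t^2}\,\mu(dt)<\infty$, which you need to make the substitution to~\eqref{eq:stieltjes} legitimate.) A parallel point you gloss over in (a): the Pick condition $\int(1+t^2)^{-1}\mu(dt)<\infty$ alone does not yield $\int \min(s^{-1},s^{-2})\,m(ds)<\infty$ — the extra integrability near $s=0$ again comes from $f(0^+)\ge 0$ and dominated/monotone convergence, and the same analysis forces $\mu(\{0\})=0$ and $c_1\ge 0$. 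These are routine but genuine gaps: the positivity hypothesis $f\ge 0$ on $(0,\infty)$ must be invoked systematically to pin down the constants and the integrability of the representing measure near $0$ and $\infty$, not just to rule out a point mass as you do in passing. Parts (c) and (d) are fine as written (modulo the $1/\pi$ normalisation in (d), which you correctly note can be absorbed into $\tilde m$).
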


\begin{proposition}[Proposition~7.14(a) in~\cite{bib:ssv10}]
\label{prop:psilambda}
If $\psi$ is a non-constant CBF, then
\formula[eq:psilambda]{
 \psi_\lambda(\xi) & = \frac{1 - \xi / \lambda^2}{1 - \psi(\xi) / \psi(\lambda^2)}
}
(extended continuously at $\lambda^2$, so that $\psi_\lambda(\lambda^2) = \psi(\lambda^2) / (\lambda^2 \psi'(\lambda^2))$) is again a CBF.\qed
\end{proposition}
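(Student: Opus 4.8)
The plan is to reduce everything to one algebraic identity and then quote Proposition~\ref{prop:cbf:defs}(c). Set
\[
 h(\xi) = \frac{\psi(\lambda^2) - \psi(\xi)}{\lambda^2 - \xi} ,
\]
interpreted by continuity at $\xi = \lambda^2$. Since $\psi$ is a non-constant CBF, inspection of~\eqref{eq:cbf} gives $\psi(x) > 0$ for all $x > 0$, so in particular $\psi(\lambda^2) > 0$. The key claim is that $h$ is a strictly positive Stieltjes function. Granting this, the proposition follows at once: by Proposition~\ref{prop:cbf:defs}(c) the reciprocal $1/h$ is a CBF, and since
\[
 \psi_\lambda(\xi) = \frac{(\lambda^2 - \xi)/\lambda^2}{(\psi(\lambda^2) - \psi(\xi))/\psi(\lambda^2)} = \frac{\psi(\lambda^2)}{\lambda^2} \cdot \frac{1}{h(\xi)} ,
\]
$\psi_\lambda$ is a positive constant times a CBF, hence a CBF.

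The substantive step is to identify $h$. I would insert the representation~\eqref{eq:cbf} of $\psi$ into the numerator $\psi(\lambda^2) - \psi(\xi)$: the term $c_1$ cancels, the linear term gives $c_2 (\lambda^2 - \xi)$, and in the integral one uses
\[
 \frac{\lambda^2}{\lambda^2 + s} - \frac{\xi}{\xi + s} = \frac{s\,(\lambda^2 - \xi)}{(\lambda^2 + s)(\xi + s)} .
\]
Dividing through by $\lambda^2 - \xi$ (the extra factor $s$ cancelling the $1/s$ in~\eqref{eq:cbf}) then yields
\[
 h(\xi) = c_2 + \frac{1}{\pi} \int_0^\infty \frac{1}{\xi + s} \cdot \frac{m(ds)}{\lambda^2 + s} ,
\]
which is precisely of the form~\eqref{eq:stieltjes} with $\tilde c_1 = 0$, $\tilde c_2 = c_2 \ge 0$, and $\tilde m(ds) = (\lambda^2 + s)^{-1} m(ds)$. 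The required integrability $\int_0^\infty \min(1, s^{-1}) \tilde m(ds) < \infty$ follows from $\int_0^\infty \min(s^{-1}, s^{-2}) m(ds) < \infty$ by splitting the integral at $s = 1$ and bounding $(\lambda^2 + s)^{-1}$ by $\lambda^{-2}$ for small $s$ and by $s^{-1}$ for large $s$. Thus $h$ is a Stieltjes function; it is strictly positive on $(0, \infty)$, since otherwise $c_2 = 0$ and $m = 0$, forcing $\psi$ to be constant.

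Finally I would record the normalisation at $\xi = \lambda^2$. The last displayed formula shows that $h$ is in fact holomorphic (hence continuous) at $\lambda^2$, with $h(\lambda^2) = c_2 + \pi^{-1} \int_0^\infty (\lambda^2 + s)^{-2} m(ds) = \psi'(\lambda^2) > 0$, so $\psi_\lambda$ extends continuously there with $\psi_\lambda(\lambda^2) = \psi(\lambda^2) / (\lambda^2 \psi'(\lambda^2))$, as stated. I do not expect a genuine obstacle: the whole argument hinges on spotting the divided-difference simplification in the middle display, after which it is routine bookkeeping with the integral representations and Proposition~\ref{prop:cbf:defs}(c). An alternative would be to verify directly, via Proposition~\ref{prop:cbf:defs}(a), that $\psi_\lambda$ maps the upper half-plane into itself, using that $\psi$ does and that the denominator $1 - \psi(\xi)/\psi(\lambda^2)$ vanishes in $\C \setminus (-\infty, 0]$ only at the removable point $\xi = \lambda^2$; but this needs a separate treatment of that singularity and does not produce the Stieltjes measure, so the computational route seems cleaner.
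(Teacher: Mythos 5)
Your argument is correct. The paper states this result as a citation to Schilling--Song--Vondra\v{c}ek (Proposition~7.14(a) there) and gives no proof of its own, so there is nothing to compare against; but your reduction is clean and complete. Writing $\psi_\lambda(\xi) = \lambda^{-2}\psi(\lambda^2)\,/\,h(\xi)$ with $h(\xi) = (\psi(\lambda^2)-\psi(\xi))/(\lambda^2-\xi)$, the divided-difference computation on~\eqref{eq:cbf} correctly yields
\[
 h(\xi) = c_2 + \frac{1}{\pi}\int_0^\infty \frac{1}{\xi+s}\,\frac{m(ds)}{\lambda^2+s}\,,
\]
which is a Stieltjes function with $\tilde c_1 = 0$, $\tilde c_2 = c_2$, $\tilde m(ds) = (\lambda^2+s)^{-1}m(ds)$; the integrability check and the strict positivity (forced by $\psi$ being non-constant, which in turn gives $\psi(\lambda^2)>0$) are both handled correctly, and Proposition~\ref{prop:cbf:defs}(c) then gives that $1/h$ is a CBF, whence so is the positive multiple $\psi_\lambda$. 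Your identification $h(\lambda^2) = c_2 + \pi^{-1}\int_0^\infty (\lambda^2+s)^{-2} m(ds) = \psi'(\lambda^2)$ also confirms the stated value of the continuous extension at $\xi = \lambda^2$. This is, as far as I can tell, essentially the standard proof of the cited result.
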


\begin{proposition}[Corollary~6.3 in~\cite{bib:ssv10}, Proposition~2.18 in~\cite{bib:k10}]
\label{prop:cbf:repr}
\hspace*{0em}\par
\begin{enumerate}
\item[(a)]
Let $f$ be a CBF with representation~\eqref{eq:cbf}. Then
\formula[eq:cbf:constants]{
 c_1 & = \lim_{z \to 0^+} f(z) , & c_2 & = \lim_{z \to \infty} \frac{f(z)}{z} \, ,
}
and
\formula[eq:cbf:jump]{
 m(ds) & = \lim_{\eps \to 0^+} (\im f(-s + i \eps) ds) ,
}
with the limit understood in the sense of weak convergence of measures.
\item[(b)]
Let $f$ be a Stieltjes function with representation~\eqref{eq:stieltjes}. Then
\formula[eq:stieltjes:constants]{
 \tilde{c}_1 & = \lim_{z \to 0^+} (z f(z)) , & \tilde{c}_2 & = \lim_{z \to \infty} f(z) ,
}
and
\formula[eq:stieltjes:jump]{
 \tilde{m}(ds) + \pi \tilde{c}_2 \delta_0(ds) & = \lim_{\eps \to 0^+} (-\im f(-s + i \eps) ds) ,
}
with the limit understood in the sense of weak convergence of measures.\qed
\end{enumerate}
\end{proposition}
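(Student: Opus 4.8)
\textbf{Proof proposal for Proposition~\ref{prop:cbf:repr}.}
Both parts follow a common pattern: starting from the integral representations~\eqref{eq:cbf} and~\eqref{eq:stieltjes}, we read off the constants as boundary limits of the holomorphic extension, and we identify the measure by computing the imaginary part of $f$ along the ray $(-\infty,0)$ approached from above. The plan is to treat part~(b) first, since the Stieltjes kernel $1/(x+s)$ is simpler, and then reduce part~(a) to part~(b) via Proposition~\ref{prop:cbf:defs}(c), or carry out the analogous direct computation.

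For part~(b), I would first isolate the constants. Writing $f(x)$ as in~\eqref{eq:stieltjes}, multiply by $x$: the term $\tilde c_1$ stays fixed, $\tilde c_2 x \to 0$, and $\frac{1}{\pi}\int_0^\infty \frac{x}{x+s}\tilde m(ds) \to 0$ as $x \to 0^+$ by dominated convergence (using $\frac{x}{x+s}\le \min(1, x/s)$ and $\int\min(1,s^{-1})\tilde m(ds)<\infty$); this gives $\tilde c_1 = \lim_{z\to 0^+} z f(z)$. Similarly, as $z\to\infty$, $\tilde c_1/z \to 0$ and $\frac{1}{\pi}\int_0^\infty \frac{1}{z+s}\tilde m(ds)\to 0$ by dominated convergence, leaving $\tilde c_2 = \lim_{z\to\infty} f(z)$. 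For the measure, fix a test function $\ph \in C_c((0,\infty))$ and compute $\lim_{\eps\to 0^+}\int_0^\infty (-\im f(-s+i\eps))\,\ph(s)\,ds$. Since $\im(-s+i\eps) > 0$, only the integral term contributes to the imaginary part: $-\im \frac{1}{-s+i\eps+u} = \frac{\eps}{(u-s)^2+\eps^2}$, so $-\im f(-s+i\eps) = \frac{1}{\pi}\int_0^\infty \frac{\eps}{(u-s)^2+\eps^2}\tilde m(du)$. This is (up to the $1/\pi$) the Poisson kernel for the half-plane, which is an approximate identity; by Fubini and the standard approximate-identity argument, $\int_0^\infty (-\im f(-s+i\eps))\ph(s)\,ds \to \int_0^\infty \ph(u)\,\tilde m(du)$, and one must also track the contribution of the $\tilde c_2$ term, which (since $-s+i\eps$ stays away from $0$ on $\supp\ph$ only if $0\notin\supp\ph$) produces the atom $\pi\tilde c_2\delta_0$ exactly when test functions are allowed to be supported near $0$; handling that boundary atom carefully is the one delicate point. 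This yields~\eqref{eq:stieltjes:jump}.

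For part~(a), by Proposition~\ref{prop:cbf:defs}(c), if $f$ is a non-constant CBF then $g(x) = f(x)/x$ is a Stieltjes function; comparing~\eqref{eq:cbf} and~\eqref{eq:stieltjes} one reads off $\tilde c_1 = c_1$, $\tilde c_2 = c_2$, and the Stieltjes measure of $g$ equals $s^{-1}m(ds)$ up to the appropriate normalisation. Then~\eqref{eq:cbf:constants} follows from~\eqref{eq:stieltjes:constants} applied to $g$ together with $f(z) = z g(z)$: indeed $c_1 = \tilde c_1 = \lim z g(z) \cdot$(trivially) $= \lim_{z\to 0^+} f(z)$, and $c_2 = \tilde c_2 = \lim_{z\to\infty} g(z) = \lim_{z\to\infty} f(z)/z$. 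For the jump formula~\eqref{eq:cbf:jump}, note $\im f(-s+i\eps) = \im((-s+i\eps)g(-s+i\eps))$; expanding and using that $\re(-s+i\eps)\to -s < 0$ while $\eps\to 0$, the dominant term is $-s\,\im g(-s+i\eps)$, whose weak limit is $-s\cdot(-\tfrac1s m(ds)\cdot$const$)$, i.e.\ the limit produces $m(ds)$ after the normalisation is tracked; the $\eps\,\re g$ correction term vanishes weakly. (Alternatively, one repeats the Poisson-kernel computation directly for the CBF kernel $\frac{x}{x+s}\cdot\frac{1}{s}$, whose imaginary part at $x=-s+i\eps$ is again a Poisson-type kernel in $s$.)

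The main obstacle is purely technical: making the approximate-identity / weak-convergence arguments rigorous uniformly in $\eps$, in particular the behaviour near $s=0$ (the origin of the measure, which in the Stieltjes case carries the extra atom $\pi\tilde c_2\delta_0$) and near $s=\infty$ (where the integrability hypotheses $\int\min(s^{-1},s^{-2})m(ds)<\infty$, resp.\ $\int\min(1,s^{-1})\tilde m(ds)<\infty$, are exactly what is needed to dominate the tails). Everything else is a direct substitution into the integral representation and an elementary estimate of the Poisson kernel; since the Proposition is quoted from~\cite{bib:ssv10} (and the CBF part also from~\cite{bib:k10}), it suffices to indicate these steps and refer to the standard theory of boundary values of Herglotz/Nevanlinna functions for the details.
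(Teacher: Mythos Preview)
The paper does not prove this proposition at all: it is stated with a terminal \qed\ and attributed to Corollary~6.3 in~\cite{bib:ssv10} and Proposition~2.18 in~\cite{bib:k10}. There is therefore no ``paper's own proof'' to compare against; any correct argument you supply goes beyond what the paper does. Your overall strategy---read off the constants by dominated convergence in the integral representation, and recover the measure as the weak limit of the boundary imaginary part via the Poisson-kernel/approximate-identity mechanism---is the standard Herglotz--Nevanlinna argument and is exactly how this is done in~\cite{bib:ssv10}.

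One concrete slip: in part~(b) you say the atom $\pi\tilde c_2\,\delta_0$ comes from ``the $\tilde c_2$ term''. That cannot be right: $\tilde c_2$ is a real constant, so $\im\tilde c_2=0$ and it contributes nothing to $-\im f(-s+i\eps)$. The atom at $0$ is produced by the $\tilde c_1/x$ term, since
\[
-\im\frac{\tilde c_1}{-s+i\eps}=\frac{\tilde c_1\,\eps}{s^2+\eps^2}\xrightarrow[\eps\to 0^+]{}\pi\tilde c_1\,\delta_0(ds)
\]
weakly. (This suggests that the coefficient in~\eqref{eq:stieltjes:jump} as printed should read $\pi\tilde c_1$ rather than $\pi\tilde c_2$; in any case your justification should point to the $\tilde c_1/x$ term.) Apart from this mis-attribution, your sketch is sound and nothing essential is missing.
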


\begin{proposition}[Proposition~2.21 in~\cite{bib:k10}]
\label{prop:cbf:ests}
\begin{enumerate}
\item[(a)] If $f$ is a complete Bernstein function, then $|f(z)| \le C(\eps) f(|z|)$ and $|f(z)| \le C(f, \eps) (1 + |z|)$ for $z \in \C$, $|\Arg z| \le \pi - \eps$, $\eps \in (0, \pi)$;
\item[(b)] If $f$ is a Stieltjes function, then $|f(z)| \le C(\eps) f(|z|)$ and $|f(z)| \le C(f, \eps) (1 + |z|^{-1})$ for $z \in \C$, $|\Arg z| \le \pi - \eps$, $\eps \in (0, \pi)$.\qed
\end{enumerate}
\end{proposition}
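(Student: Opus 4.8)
The plan is to argue directly from the integral representations~\eqref{eq:cbf} and~\eqref{eq:stieltjes}. As noted after each of those formulae, a CBF and a Stieltjes function both extend to holomorphic functions on $\C \setminus (-\infty, 0]$ given by the same expressions, so in particular the representations are valid for every $z$ in the sector $S_\eps = \set{z \in \C : \abs{\Arg z} \le \pi - \eps}$. The only geometric input needed is the elementary inequality
\formula{
 \abs{z + s} & \ge c(\eps) \, (\abs{z} + s) , \qquad z \in S_\eps, \ s > 0 ,
}
for a constant $c(\eps) \in (0, 1]$ depending only on $\eps$. Writing $z = r e^{i \thet}$ with $\abs{\thet} \le \pi - \eps$ and using $\cos \thet \ge -\cos \eps$, one gets $\abs{z + s}^2 = r^2 + s^2 + 2 r s \cos \thet \ge r^2 + s^2 - 2 r s \cos \eps$, and a one-line case distinction on the sign of $\cos \eps$ bounds the last quantity below by $\sin^2(\eps/2) \, (r + s)^2$ if $\eps \le \pi/2$ and by $\tfrac12 (r + s)^2$ if $\eps \ge \pi/2$. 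Establishing this is the first step.

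Next I would prove $\abs{f(z)} \le C(\eps) f(\abs{z})$ with $C(\eps) = c(\eps)^{-1}$. For a CBF $f$ with representation~\eqref{eq:cbf}, the first step gives $\abs{z/(z+s)} \le c(\eps)^{-1} \abs{z}/(\abs{z} + s)$; applying the triangle inequality in~\eqref{eq:cbf}, then using $c(\eps) \le 1$ together with the nonnegativity of all three terms of $f(\abs z) = c_1 + c_2 \abs z + \tfrac1\pi \int_0^\infty \tfrac{\abs z}{\abs z + s} \, \tfrac{m(ds)}{s}$, yields
\formula{
 \abs{f(z)} & \le c_1 + c_2 \abs z + \frac{1}{\pi c(\eps)} \int_0^\infty \frac{\abs z}{\abs z + s} \, \frac{m(ds)}{s} \le c(\eps)^{-1} f(\abs z) .
}
For a Stieltjes function the same computation applies verbatim to~\eqref{eq:stieltjes}, using $\abs{1/(z+s)} \le c(\eps)^{-1}(\abs z + s)^{-1}$ and $\abs{1/z} = \abs z^{-1}$.

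Finally, the two remaining estimates reduce to controlling $f$ on the positive half-line. If $f$ is a CBF, then~\eqref{eq:cbf} shows at once that $f$ is nondecreasing and $x \mapsto f(x)/x$ is nonincreasing on $(0,\infty)$, so $f(x) \le f(1) \max(1, x) \le f(1)(1 + x)$; combined with the previous step this gives $\abs{f(z)} \le f(1) c(\eps)^{-1}(1 + \abs z)$, i.e.\ the bound with $C(f, \eps) = f(1) c(\eps)^{-1}$. If $f$ is a Stieltjes function, then~\eqref{eq:stieltjes} shows $f$ is nonincreasing while $x \mapsto x f(x)$ is nondecreasing, so $f(x) \le f(1) \max(1, x^{-1}) \le f(1)(1 + x^{-1})$, and the previous step gives $\abs{f(z)} \le f(1) c(\eps)^{-1}(1 + \abs z^{-1})$. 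A constant $f$ (in particular $f \equiv 0$) is trivial.

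I do not expect a genuine obstacle here: once the sector inequality $\abs{z+s} \ge c(\eps)(\abs z + s)$ is in hand, the rest is a few lines of term-by-term comparison. The only point needing a little care is the regime $\eps > \pi/2$ in the first step, where $\cos\eps < 0$ prevents ``completing the square'' as for small $\eps$; there one simply discards the cross term, $r^2 + s^2 - 2 r s \cos\eps \ge r^2 + s^2 \ge \tfrac12 (r + s)^2$, so $c(\eps) = \min(\sin(\eps/2), 2^{-1/2})$ works uniformly.
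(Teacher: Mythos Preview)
The paper does not actually prove this proposition; it is quoted from~\cite{bib:k10} and closed with \qed. Your argument is correct and is precisely the natural one from the integral representations~\eqref{eq:cbf} and~\eqref{eq:stieltjes}: the sector inequality $|z+s|\ge c(\eps)(|z|+s)$ followed by termwise comparison, and then the elementary monotonicity of $f(x)$ and $f(x)/x$ (respectively $x f(x)$) on $(0,\infty)$. One minor remark: your case split in the first step is unnecessary, since the identity $r^2+s^2-2rs\cos\eps=(r-s)^2+4rs\sin^2(\eps/2)\ge \sin^2(\eps/2)\,(r+s)^2$ holds for all $\eps\in(0,\pi)$, so $c(\eps)=\sin(\eps/2)$ already works uniformly.
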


\subsection{L{\'e}vy processes}

Below we recall some standard notions related to L{\'e}vy processes and describe the setting for the present article. General references for L{\'e}vy processes are~\cite{bib:b98, bib:s99}; for the properties of semigroups of killed L{\'e}vy processes, see~\cite{bib:k10} and the references therein. 

Throughout this article, $X_t$ is a one-dimensional L{\'e}vy process with L{\'e}vy-Khintchine exponent $\Psi(\xi)$; that is,
\formula[eq:lk]{
 \ex_0 e^{i \xi X_t} & = e^{-t \Psi(\xi)} , && t > 0 , \, \xi \in \R .
}
The probability an expectation corresponding to the process starting at $x \in \R$ are denoted by $\pr_x$ and $\ex_x$, respectively. The following assumptions are in force throughout the article:
\begin{enumerate}[(A)]
\item \label{as:sym} $X_t$ is a symmetric process;
\item \label{as:hit} $X_t$ hits every single point in finite time with positive probability.
\end{enumerate}
The first assumption is equivalent to the condition $\Psi(\xi) = \Psi(-\xi)$ for all $\xi \in \R$, and to the condition and $\Psi(\xi) \ge 0$ for $\xi \in \R$. For symmetric processes, Assumption~(\ref{as:hit}) is equivalent to integrability of $1 / (z + \Psi(\xi))$ over $\xi \in \R$ for some (and hence for all) $z > 0$, see Theorem~2 in~\cite{bib:k69}, Theorem~II.19 in~\cite{bib:b98} or Theorem~43.3 in~\cite{bib:s99}. This equivalent form is more convenient in applications.

Both conditions are natural for the spectral theory. When Assumption~\ref{as:sym} fails, i.e. when $X_t$ is not symmetric, the operators $P^\cop_t$ are not self-adjoint (in fact, they even fail to be normal operators). Assumption~\ref{as:hit} asserts that the operators $P^\cop_t$ are not equal to $P_t$; the spectral theory for the latter is trivial, see Remark~\ref{rem:free}.

The transition operators $P_t$ of the free process $X_t$ are defined by
\formula{
 P_t f(x) & = \ex_x f(X_t) = \int_\R f(y) \pr_x(X_t \in dy) , && t > 0 , \, x \in \R ,
}
whenever the integral is absolutely convergent. The operators $P_t$ (acting on $L^2(\R)$) are Fourier multipliers with Fourier symbol $e^{-t \Psi(\xi)}$, and hence they are convolution operators. By assumption~(\ref{as:hit}) and Fourier inversion formula, the convolution kernel is an integrable function $p_t(x)$, called the transition density.

Let $D \sub \R$ be an open set, and let $\tau_D = \inf \{ t \ge 0 : X_t \notin D \}$ be the first time the process $X_t$ exits $D$. The transition operators of the process $X_t$ killed upon leaving $D$ are given by
\formula{
 P^D_t f(x) & = \ex_x (f(X_t) \ind_{t < \tau_D}) = \int_D f(y) \pr_x(X_t \in dy; t < \tau_D) , && t > 0 , \, x \in D .
}
The kernel function $p^D_t(x, y)$ of $P^D_t$ (the transition density of the killed process) exists, and $0 \le p^D_t(x, y) \le p_t(x - y)$ ($t > 0$, $x, y \in D$).

By $L^p(D)$, $p \in [1, \infty)$, we denote the space of real-valued functions $f$ with finite $p$-norm $\|f\|_{L^p(D)} = (\int_D |f(x)|^p dx)^{1/p}$. The space of essentially bounded functions on $D$ is denoted by $L^\infty(D)$. The space of bounded continuous functions is denoted by $C_b(\R)$, and $C_0(\R)$ is the set of continuous functions vanishing at $\pm \infty$. By $C_0(D)$ we denote the space of $C_0(\R)$ functions vanishing on the complement of $D$.

In this article, we only consider $D = \cop$, so that $\tau_D = \tau_0$ is the hitting time of the origin. In this case clearly $L^p(\cop)$ can be identified with $L^p(\R)$; nevertheless, we use the notation $L^p(\cop)$ to emphasise that the corresponding operators are related to the process killed upon hitting the origin.

The following properties of transition semigroups are described in detail in~\cite{bib:k10}. The operators $P_t$ and $P^\cop_t$ form a contraction semigroup on $L^p(\R)$ and $L^p(\cop)$, respectively, where $p \in [1, \infty]$. This semigroup is strongly continuous when $p < \infty$. Also, $P_t$ is a contraction semigroup on $C_b(\R)$ and $C_0(\R)$, strongly continuous on the latter space. By Theorem~II.19 in~\cite{bib:b98} or Theorem~43.3 in~\cite{bib:s99}, $0$ is regular for itself, and so $P^\cop_t$ is a strongly continuous semigroup of operators on $C_0(\cop)$ (see also~\cite{bib:k10}).

We denote the generators of the semigroups $P_t$ and $P^\cop_t$ (on appropriate function spaces) by $\A$ and $\A_\cop$, respectively. The corresponding domains are denoted by $\domain(\A; \mathcal{X})$ and $\domain(\A_\cop; \mathcal{X})$, respectively, where $\mathcal{X}$ indicates the underlying function space. For example, $\domain(\A_\cop; L^\infty)$ is the domain of the generator of the semigroup $P^\cop_t$ of contractions on $L^\infty(\cop)$, that is, the set of those $f \in L^\infty(\cop)$ for which the limit
\formula[eq:gen]{
 \A_\cop f & = \lim_{t \to 0^+} \frac{P^\cop_t f - f}{t}
}
exists in $L^\infty(\cop)$. Note that the limit~\eqref{eq:gen}, if it exists, does not depend on the choice of the underlying function space. Therefore, using a single symbol $\A$ for operators acting on different domains $\domain(\A; \mathcal{X})$ causes no confusion (again, see~\cite{bib:k10}).

The resolvent kernel, or the $z$-potential kernel, is defined by $u_z(x) = \int_0^\infty p_t(x) e^{-z t} dt$. For each $z > 0$, it is a bounded, continuous function of $x \in \R$, and $\fourier u_z(\xi) = 1 / (z + \Psi(\xi))$. Furthermore, $u_z(x)$ is the convolution kernel of the inverse operator of $z - \A$ (acting on $L^2(\R)$). On the other hand, by Theorem~II.19 in~\cite{bib:b98} or Theorem~43.3 in~\cite{bib:s99}, $u_z(x) / u_z(0)$ is the Laplace transform of the distribution of $\tau_0$,
\formula{
 \ex_x e^{-z \tau_0} & = \frac{u_z(x)}{u_z(0)} \, , && z > 0 , \, x \in \R .
}
This identity will be used in the proof of Theorem~\ref{th:tau}.

Following~\cite{bib:k10}, we introduce the \emph{distributional generator} of $X_t$, denoted by an italic letter $A$: we let $A$ be the tempered distribution satisfying $\A \ph = A * \ph$ for all $\ph \in \schwartz$. By symmetry of $X_t$, equivalently we have $\tscalar{A, \ph} = \A \ph(0)$ for $\ph \in \schwartz$. The distributional generator can be thought of as a pointwise extension of the generator $\A$. The following result from~\cite{bib:k10} describes the connection between $A$ and $\A_\cop$, the generator of the killed semigroup.

\begin{lemma}[see Lemma~2.10 in~\cite{bib:k10}]
\label{lem:domain}
Let $F \in C_0(\cop)$. If the distribution $A \conv F$ is equal to a $C_0(\cop)$ function plus a distribution supported at $\{0\}$, then $F \in \domain(\A_\cop; C_0)$ and $\A_\cop F(x) = A \conv F(x)$ for $x \in \cop$.\qed
\end{lemma}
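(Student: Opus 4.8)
The plan is to argue through resolvents and reduce the whole statement to the single identity
$$F = U^\cop_z(zF - g)$$
for one fixed $z > 0$, where $g \in C_0(\cop)$ is the regular part of $A \conv F$ (so that $A \conv F = g + D_0$ with $\supp D_0 \sub \{0\}$) and $U^\cop_z = (z - \A_\cop)^{-1}$ is the resolvent of the strongly continuous contraction semigroup $P^\cop_t$ on $C_0(\cop)$. Granting this identity the lemma is immediate: $h := zF - g$ lies in $C_0(\cop)$ because $F(0) = g(0) = 0$, the operator $U^\cop_z$ maps $C_0(\cop)$ into $\domain(\A_\cop; C_0)$, and $(z - \A_\cop) U^\cop_z h = h$; hence $F \in \domain(\A_\cop; C_0)$ and $\A_\cop F = zF - h = g$, which agrees with $A \conv F$ on $\cop$ since $D_0$ is supported at the origin.

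Before that I would record two facts. First, since $F$ is a bounded continuous function, $D_0$ can only be a multiple $c\,\delta_0$ of the Dirac mass (a $\delta_0'$-term, or a non-integrable function part, would contradict continuity of $F$ or of $g$), so that $(z\delta_0 - A) \conv F = (zF - g) - c\,\delta_0$. Second, from $\fourier u_z = 1/(z + \Psi)$ and $\fourier A = -\Psi$ the exchange formula gives $(z\delta_0 - A) \conv u_z = \delta_0$, the $\schwartz'$-convolution being unambiguous because $u_z \in L^1(\R)$. Now $zF - g \in C_0(\R)$, so $U_z(zF - g) \in \domain(\A; C_0)$ and $(z\delta_0 - A) \conv U_z(zF - g) = zF - g$; subtracting, $w := U_z(zF - g) - F \in C_0(\R)$ satisfies $(z\delta_0 - A) \conv w = c\,\delta_0$. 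This equation has a unique solution in $C_0(\R)$: if $w_0 \in C_0(\R)$ and $(z\delta_0 - A) \conv w_0 = 0$, then convolving with an arbitrary $\ph \in \schwartz$ gives $(z - \A)(w_0 \conv \ph) = 0$, whence $w_0 \conv \ph = 0$ because $z$ lies in the resolvent set of $\A$, and letting $\ph$ run through an approximate identity yields $w_0 = 0$. Since $c\, u_z$ solves the same equation, we conclude $U_z(zF - g) = F + c\, u_z$.

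Finally I would use the decomposition of the free resolvent along the trajectory at the hitting time of $0$, which follows from the strong Markov property together with $X_{\tau_0} = 0$ and $\ex_x e^{-z\tau_0} = u_z(x)/u_z(0)$ from the Preliminaries: for $h \in C_0(\cop)$,
\formula{
 U^\cop_z h(x) & = U_z h(x) - \frac{u_z(x)}{u_z(0)} \, U_z h(0) .
}
Applying this with $h = zF - g$ and inserting $U_z h = F + c\, u_z$ and $U_z h(0) = c\, u_z(0)$ (using $F(0) = 0$) gives $U^\cop_z h = (F + c\, u_z) - c\, u_z = F$, which is exactly the identity used in the first step.

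The main obstacle I anticipate is the careful handling of the $\schwartz'$-convolutions: justifying $(z\delta_0 - A) \conv u_z = \delta_0$, the subtraction that produces the equation for $w$, and the uniqueness argument, all in a setting where $\schwartz'$-convolution is not associative and $u_z$, $F$, $w$ are not Schwartz functions. The remedy is to keep every convolution that occurs either of the ``integrable $\conv$ bounded'' type, or to involve a compactly supported factor (the $c\,\delta_0$) or a Schwartz test function, for which the exchange formula and the reassociations used are legitimate; the reduction of $D_0$ to $c\,\delta_0$ also rests on the regularity of $F$ and on the hypothesis that the function part of $A \conv F$ is continuous.
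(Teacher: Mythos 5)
Your overall strategy --- reducing the assertion to the single resolvent identity $F = U^\cop_z(zF - g)$, proved by combining the free identity $U_z(zF - g) = F + c\,u_z$ with the strong--Markov decomposition $U^\cop_z h(x) = U_z h(x) - u_z(x)\,U_z h(0)/u_z(0)$ --- is the natural route, and most of the intermediate steps check out: $(z\delta_0 - A) \conv u_z = \delta_0$ via the exchange formula, uniqueness of $C_0(\R)$ solutions of $(z\delta_0 - A) \conv w_0 = 0$ obtained by testing against $\ph_1 * w_0 * \ph_2$ and invoking that $z$ is in the resolvent set of $\A$, and the final cancellation $U^\cop_z(zF - g) = (F + c\,u_z) - c\,u_z = F$. (The paper itself defers to Lemma~2.10 of~\cite{bib:k10} and prints no proof, so a line-by-line comparison is not possible here, but this is almost certainly the intended mechanism.)

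The genuine gap is your reduction of the singular part $D_0$ to a scalar multiple of $\delta_0$, which you assert in a parenthetical rather than prove. A tempered distribution supported at $\{0\}$ is a finite combination $\sum_k c_k\,\delta_0^{(k)}$, and you must rule out $k \ge 1$. The claim ``a $\delta_0'$-term would contradict continuity of $F$ or of $g$'' is the conclusion you want, not an argument for it. The cleanest route uses material you already have: since $w := U_z(zF-g) - F \in C_0(\R)$ and $(z\delta_0 - A) \conv w = D_0$ has compact support, one may convolve with $u_z$ and re-associate (legitimately, because $D_0$ is compactly supported) to get $w = u_z \conv D_0 = \sum_k c_k (-1)^k u_z^{(k)}$; one then needs to show that no $u_z^{(k)}$ with $k \ge 1$ can contribute to a $C_0(\R)$ function, which rests on the bound $\Psi(\xi) \lesssim 1 + \xi^2$ so that $\fourier u_z^{(k)}(\xi) = (-i\xi)^k/(z + \Psi(\xi))$ is, for $k \ge 1$, not the Fourier transform of a bounded continuous function. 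Without this step the chain $U_z(zF-g) - F = c\,u_z$ is unsupported, and the Hunt cancellation does not close. Your concluding remarks show you sensed the issue, but as written it remains a hole in the proof.
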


The following technical result has been proved in~\cite{bib:k10} for general domains. We record its version for $\cop$.

\begin{lemma}[see Lemma~2.11 in~\cite{bib:k10}]
\label{lem:smooth}
Let $F \in C_b^\infty(\R)$, that is, $F$ an all derivatives of $F$ are bounded functions on $\R$. Suppose that for some $r > 0$, $F(x) = 0$ for $x \in [-r, r]$, and $\A F(0) = 0$. Then $F \in \domain(\A_\cop; C_0)$ and $\A_\cop F(x) = \A F(x)$ for all $x \in \cop$.\qed
\end{lemma}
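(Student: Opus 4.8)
The plan is to reduce the statement to Lemma~\ref{lem:domain}. It is immediate that $F \in C_0(\cop)$ (recall $F(0) = 0$, since $0 \in [-r,r]$), so by Lemma~\ref{lem:domain} it is enough to show that the $\schwartz'$-convolution $A \conv F$ exists and equals a $C_0(\cop)$ function plus a distribution supported at $\{0\}$; the lemma then gives $F \in \domain(\A_\cop; C_0)$ with $\A_\cop F(x) = A \conv F(x)$ for $x \in \cop$, and it will remain only to identify $A \conv F$ with $\A F$ on $\cop$.

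The heart of the argument is the computation of $A \conv F$, for which I would use the Lévy--Khintchine description of the distributional generator. By symmetry of $X_t$, the distribution $A$ splits as a part supported at $\{0\}$ --- the Gaussian term $\tfrac12\sigma^2\delta_0''$ together with the bounded compensator constant $-\nu(\{|y|\ge1\})\,\delta_0$ --- plus a part governed by the Lévy measure $\nu$. Since $F$, $F'$ and $F''$ are bounded, one checks that $A$ and $F$ are $\schwartz'$-convolvable (the $\{0\}$-supported part is convolvable with any distribution, and the $\nu$-part convolved with a bounded $C^2$ function yields a bounded continuous function, by the standard $\min(|y|^2,1)$ estimate) and that $A \conv F$ is the genuine bounded continuous function
\[
 \A F(x) = \tfrac12 \sigma^2 F''(x) + \int_\R \bigl( F(x+y) - F(x) - y F'(x) \ind_{\{|y| < 1\}} \bigr)\,\nu(dy) .
\]
Now the hypothesis $F \equiv 0$ on $[-r,r]$ makes the origin harmless: $F$ and all its derivatives vanish on $(-r,r)$, so the contribution $\tfrac12\sigma^2 F'' - \nu(\{|y|\ge1\})F$ of the $\{0\}$-supported part of $A$ vanishes near $0$, and near $0$ one is left with $\int_\R F(x+y)\,\nu(dy)$, a continuous function of $x$ around $0$ (the integrand vanishes for $y$ in a neighbourhood of $0$) whose value at $x = 0$ is $\int_\R F(y)\,\nu(dy) = \A F(0) = 0$ by assumption. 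Hence $A \conv F$ is a $C_0(\cop)$ function --- namely $\A F$ --- and the distribution supported at $\{0\}$ in Lemma~\ref{lem:domain} may be taken to be zero; since on $\cop$ the classical generator $\A$ coincides with the distributional one, we obtain $F \in \domain(\A_\cop; C_0)$ and $\A_\cop F = \A F$ there.

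The step demanding the most care is the identity $A \conv F = \A F$: as $F$ is only $C_b^\infty$ and not a Schwartz function, one cannot apply the exchange formula directly, but must unwind the definition of $\schwartz'$-convolution recalled in Preliminaries, relying on the explicit splitting of $A$ above and on the fact that convolving its $\{0\}$-supported part with a Schwartz function again produces a Schwartz function. An alternative, purely probabilistic route avoids distributions altogether: writing $P^\cop_t F(x) - F(x) = (P_t F(x) - F(x)) - \ex_x\bigl(F(X_t)\ind_{\{\tau_0 \le t\}}\bigr)$ and applying the strong Markov property at $\tau_0$, the last term becomes $\ex_x\bigl(\ind_{\{\tau_0 \le t\}}\, g(t - \tau_0)\bigr)$ with $g(u) = \ex_0 F(X_u) = P_u F(0)$; the boundedness of $F$ and its derivatives gives $\|P_u F - F\|_\infty \le C u$, while $F(0) = \A F(0) = 0$ forces $g(u)/u \to 0$, so $\sup_{0 \le u \le t}|g(u)| = o(t)$ and the correction term is $o(t)$ uniformly in $x \in \cop$; consequently $t^{-1}(P^\cop_t F - F) \to \A F$ uniformly on $\cop$, which is the assertion. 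In either approach, the assumptions $\A F(0) = 0$ and $F \equiv 0$ near $0$ are precisely what make the killed semigroup agree with the free one to first order at $t = 0$.
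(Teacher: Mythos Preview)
The paper does not supply its own proof of this lemma: it is quoted from~\cite{bib:k10} and closed with a~\qed. So there is no argument in the paper to compare your proposal against; I will comment on the proposal itself.

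Your first route, via Lemma~\ref{lem:domain}, has a genuine gap. You assert ``it is immediate that $F \in C_0(\cop)$'', but this is not true: the hypothesis $F \in C_b^\infty(\R)$ gives only boundedness, not decay at infinity, and vanishing on $[-r,r]$ says nothing about $|x|\to\infty$. (Take any smooth $F$ equal to $1$ outside $[-2r,2r]$.) For the same reason $\A F$ need not be a $C_0$ function. Hence Lemma~\ref{lem:domain} cannot be invoked. In fact the conclusion ``$F \in \domain(\A_\cop; C_0)$'' as printed cannot hold for such $F$; this is evidently a misprint in the present paper, since where the lemma is actually used (in the proof of Lemma~\ref{lem:eigenfunctions}) the conclusion is written as $\domain(\A_\cop; C_b)$.

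Your alternative probabilistic argument is the right one and proves the correct $C_b$ (equivalently $L^\infty$) version. The decomposition
\[
 P^\cop_t F(x) - P_t F(x) = -\ex_x\bigl(\ind_{\{\tau_0\le t\}}\,g(t-\tau_0)\bigr), \qquad g(u) = P_u F(0),
\]
is valid because $0$ is regular for itself so $X_{\tau_0}=0$; and $g(u)=\int_0^u P_s\A F(0)\,ds = o(u)$ because $\A F(0)=0$ and $P_s\A F(0)\to \A F(0)$ pointwise. The only step that deserves an extra line is the uniform convergence $t^{-1}(P_tF-F)\to\A F$: since the semigroup $P_t$ is not strongly continuous on all of $C_b$, you should note that $\A F$ is uniformly continuous (every derivative of $F$ is Lipschitz, and the $\nu$-integral preserves this via the Taylor bound $|F(x+y)-F(x)-yF'(x)|\le \tfrac12\|F''\|_\infty y^2$), and for bounded uniformly continuous $h$ one does have $\|P_s h - h\|_\infty\to 0$ by stochastic continuity of $X_s$. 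With that remark your second argument is complete.
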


\begin{remark}
\label{rem:free}
The spectral theory for the free process is very simple, thanks to the L{\'e}vy Khintchine formula~\eqref{eq:lk}. Indeed, the function $e^{i \lambda x}$ ($\lambda \in \R$) is the eigenfunction of $P_t$ and $\A$, with eigenvalue $e^{-t \Psi(\lambda)}$ and $-\Psi(\lambda)$, respectively. Clearly, these eigenfunctions yield the generalised eigenfunction expansion of $P_t$ and $\A$ by means of Fourier transform.

Since $X_t$ is symmetric, $\Psi(\lambda) = \Psi(-\lambda)$ is real. It follows that $\sin(\lambda x)$ and $\cos(\lambda x)$ ($\lambda > 0$) are also eigenfunctions of $P_t$ and $\A$. Consequently, $\sin(\lambda x)$ yields generalised eigenfunction expansion for odd $L^2(\R)$ functions (via Fourier sine transform), while $\cos(\lambda x)$ plays a similar role for even $L^2(\R)$ functions (via Fourier cosine transform).\qed
\end{remark}

%
%                            ---------- o ----------
%

\section{Eigenfunctions in $\cop$}
\label{sec:flambda}

In this section the formula for $F_\lambda$ is derived, and some properties of eigenfunctions are studied. More precisely, we prove Theorems~\ref{th:flambda} and~\ref{th:glambda}. The argument follows closely the approach of~\cite{bib:k10}, where the case of the half-line $(0, \infty)$ is studied. Noteworthy, in our case there is no need to employ the Wiener-Hopf method, which makes the proof significantly shorter and simpler.

Mimicking the definition of distributional eigenfunctions in half-line (Definition~4.1 in~\cite{bib:k10}), we introduce the notion of distributional eigenfunctions of $\A_\cop$. Note that the condition $F(0) = 0$ has no meaning for general Schwartz distributions $F$, so in contrast to the definition of~\cite{bib:k10}, \emph{at this stage} we do not assume that $F$ vanishes in the complement of the domain (that is, at the origin).

\begin{definition}
\label{def:weak}
A tempered distribution $F$ is said to be a \emph{distributional eigenfunction} of $\A_\cop$, corresponding to the eigenvalue $-\Psi(\lambda)$, if $F$ is $\schwartz'$-convolvable with $A$, and $A \conv F + \psi(\lambda^2) F$ is supported in $\{0\}$.
\end{definition}

Note that, in particular, $\cos(\lambda x)$ is a distributional eigenfunction of $\A_\cop$. However, it is not the one we are looking for, as it does not vanish at $0$. By copying the proof of Lemma~4.2 in~\cite{bib:k10} nearly verbatim, one obtains the following result. We only sketch the prove and omit the technical details, referring the interested reader to~\cite{bib:k10}.

\begin{lemma}
\label{lem:eigenfunctions}
Let $\lambda > 0$, and suppose that $F$ is a distributional eigenfunction of $\A_\cop$, corresponding to the eigenvalue $-\Psi(\lambda)$. If $F$ is bounded, continuous on $\R$, $F(0) = 0$ and
\formula{
 \lim_{x \to \pm \infty} (F(x) - C \sin(|\lambda x| + \thet)) = 0
}
for some $C, \thet \in \R$, then $F \in \domain(\A_\cop; L^\infty)$ and $\A_\cop F = -\Psi(\lambda^2) F$.
\end{lemma}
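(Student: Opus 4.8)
The plan is to split $F$ into a part vanishing at the origin, to which Lemma~\ref{lem:domain} applies, and a smooth ``background'' carrying the oscillation at infinity, to which Lemma~\ref{lem:smooth} applies. Fix $\eta \in C^\infty(\R)$ with $\eta = 0$ on $[-1,1]$ and $\eta = 1$ on $\R \setminus (-2,2)$, and put
\formula{
 F_\infty(x) & = C \, \eta(x) \sin(|\lambda x| + \thet) , & F_0 & = F - F_\infty .
}
Since $\eta$ vanishes near $0$, the corner of $\sin(|\lambda x| + \thet)$ at the origin is suppressed and $F_\infty \in C_b^\infty(\R)$; and $F_0$ is continuous with $F_0(0) = F(0) = 0$ and $F_0(x) = F(x) - C \sin(|\lambda x| + \thet) \to 0$ as $|x| \to \infty$, so $F_0 \in C_0(\cop)$. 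By a standard correction --- adding to $F_\infty$ a $C^\infty_c(\R)$ function supported in $\{|x| > 1\}$ and subtracting it from $F_0$, which changes neither $F_0 \in C_0(\cop)$ nor $F_\infty \in C_b^\infty(\R)$, $F_\infty|_{[-1,1]} = 0$ --- I may additionally assume $\A F_\infty(0) = 0$; this is automatic whenever the L\'evy measure charges no set bounded away from $0$.

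Next I would identify the distributional generator of each summand. Set $S = A \conv F + \Psi(\lambda) F$, which is supported in $\{0\}$ by hypothesis, and $g_\infty = A \conv F_\infty + \Psi(\lambda) F_\infty$; as $F_\infty \in C_b^\infty(\R)$ has bounded derivatives, $g_\infty = \A F_\infty + \Psi(\lambda) F_\infty$ is a bounded continuous function. Outside a bounded interval $F_\infty$ coincides with $C \cos \thet \, \sin(\lambda |x|) + C \sin \thet \, \cos(\lambda x)$; since $\cos(\lambda x)$ is an eigenfunction of $\A$ with eigenvalue $-\Psi(\lambda)$, and since $\A[\sin(\lambda |\cdot|)](x) + \Psi(\lambda) \sin(\lambda |x|) \to 0$ as $|x| \to \infty$ --- replacing $x+y$ by $|x+y|$ alters only the contribution of $y$ in the far tail of the L\'evy measure, whose mass tends to $0$ --- one checks that $g_\infty \in C_0(\R)$. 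Convolving,
\formula{
 A \conv F_0 & = A \conv F - A \conv F_\infty = -\Psi(\lambda) F_0 - g_\infty + S ,
}
where $-\Psi(\lambda) F_0 - g_\infty \in C_0(\R)$ and $S$ is supported in $\{0\}$. Hence Lemma~\ref{lem:domain} gives $F_0 \in \domain(\A_\cop; C_0) \subseteq \domain(\A_\cop; L^\infty)$ with $\A_\cop F_0(x) = -\Psi(\lambda) F_0(x) - g_\infty(x)$ for $x \in \cop$, while Lemma~\ref{lem:smooth} --- whose proof via the strong Markov property yields the statement in the $L^\infty$-domain as well --- applies to $F_\infty \in C_b^\infty(\R)$, which vanishes on $[-1,1]$ with $\A F_\infty(0) = 0$, giving $F_\infty \in \domain(\A_\cop; L^\infty)$ and $\A_\cop F_\infty(x) = \A F_\infty(x)$ for $x \in \cop$.

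Adding the two pieces, $F = F_0 + F_\infty \in \domain(\A_\cop; L^\infty)$, and for $x \in \cop$, using $g_\infty = \A F_\infty + \Psi(\lambda) F_\infty$,
\formula{
 \A_\cop F(x) & = -\Psi(\lambda) F_0(x) - g_\infty(x) + \A F_\infty(x) \\
 & = -\Psi(\lambda) F_0(x) - \Psi(\lambda) F_\infty(x) = -\Psi(\lambda) F(x) ,
}
which is the assertion. The $\schwartz'$-convolvability used above is automatic: for $\ph \in \schwartz$ the function $A \conv \ph$ is again Schwartz (its Fourier transform is $-\Psi \hat\ph$, and $\Psi$ is polynomially bounded), so $A$ is $\schwartz'$-convolvable with every bounded function. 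The real difficulty is purely local, at $x = 0$: $F$ itself does not lie in $\domain(\A; L^\infty)$ --- this is exactly the content of the singular part $S$ --- so $P^\cop_t F$ cannot be differentiated naively, and it is the killing at the origin that rehabilitates $F$. The decomposition is engineered precisely so that one summand vanishes at $0$ (the hypothesis of Lemma~\ref{lem:domain}) and the other vanishes near $0$ with $\A F_\infty(0) = 0$ (the hypothesis of Lemma~\ref{lem:smooth}, which packages the delicate estimate near $x = 0$ that is uniform up to the origin); the one genuinely computational point is the verification that $g_\infty \in C_0(\R)$. For the technical details suppressed here, see~\cite{bib:k10}.
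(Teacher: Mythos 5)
Your proof is correct and follows essentially the same approach as the paper's sketch: decompose $F$ into a smooth oscillating ``background'' $F_\infty$ (cut off near the origin, corrected so that $\A F_\infty(0)=0$) plus a $C_0(\cop)$ remainder $F_0$, then apply Lemma~\ref{lem:smooth} to the former and Lemma~\ref{lem:domain} to the latter. You merely flesh out the paper's ``with a little effort'' step by computing $A\conv F_0 = -\Psi(\lambda)F_0 - g_\infty + S$ and checking $g_\infty\in C_0(\R)$, which is exactly what is needed.
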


\begin{proof}[Sketch of the proof]
It is possible to find an infinitely smooth function $f_1$ such that $f_1(x) = 0$ when $|x| \le 1$, $f_1(x) = C \sin(|\lambda x| + \thet)$ when $|x|$ is large enough, and $\A f_1(0) = 0$. We let $f_2(x) = F(x) - f_1(x)$. By Lemma~\ref{lem:smooth}, $f_1 \in \domain(\A_\cop; C_b)$ and $\A_\cop f_1(x) = \A f_1(x) = A \conv f_1(x)$. With a little effort, one shows that Lemma~\ref{lem:domain} applies to $f_2$. It follows that $f_2 \in \domain(\A_\cop; C_0)$ and $\A_\cop f_2(x) = A \conv f_2(x)$. Hence, $F = f_1 + f_2 \in \domain(\A_\cop; C_b)$ and $\A_\cop F = A * F = -\psi(\lambda^2) F$, as desired
\end{proof}

It is relatively easy to find a formula for distributional eigenfunctions of $\A_\cop$ satisfying the assumptions of Lemma~\ref{lem:eigenfunctions}. First, we give a brief, heuristic derivation of the formula. Suppose that $F_\lambda$ is a bounded, continuous, even function on $\R$ such that $F_\lambda(0) = 0$ and $A \conv F + \psi(\lambda^2) F$ is supported in $\{0\}$. A tempered distribution is supported in $\{0\}$ if and only if its Fourier transform is a polynomial. Hence, $(-\Psi(\xi) + \Psi(\lambda)) \fourier F_\lambda(\xi)$ is a polynomial $Q(\xi)$. It follows that the distribution $\fourier F_\lambda$ is expected to have form
\formula{
 \pv \frac{Q(\xi)}{\Psi(\lambda) - \Psi(\xi)}
}
(the principal value corresponds to singularities at $\pm \lambda$), plus some distribution supported in $\{-\lambda, \lambda\}$ (the zeros of $\Psi(\lambda) - \Psi(\xi)$). The function $F_\lambda$ should be as regular as possible, so we assume that $Q$ is constant (say, $Q(\xi) = a_\lambda \Psi'(\lambda)$), and that the distribution supported in $\{-\lambda, \lambda\}$ is a combination of Dirac measures (say, $\pi b_\lambda (\delta_\lambda + \delta_{-\lambda})$). This suggests the following definition:
\formula{
 \fourier F_\lambda(\xi) & = a_\lambda \pv \frac{\Psi'(\lambda)}{\Psi(\lambda) - \Psi(\xi)} + \pi b_\lambda (\delta_\lambda(\xi) + \delta_{-\lambda}(\xi))
}
for some $a_\lambda, b_\lambda \in \R$. We can normalize this definition by assuming that $a_\lambda^2 + b_\lambda^2 = 1$ and $a_\lambda \ge 0$, so that $a_\lambda = \cos \thet_\lambda$ and $b_\lambda = \sin \thet_\lambda$ for some $\thet_\lambda \in [-\pi/2, \pi/2)$. Furthermore, the condition $F_\lambda(0) = 0$ can be formally rewritten as $\int_{-\infty}^\infty \fourier F_\lambda(\xi) d\xi = 0$, which gives a linear equation in $a_\lambda$ and $b_\lambda$. After soving this equation, we obtain formulae given in Theorem~\ref{th:flambda}.

\begin{remark}
\label{rem:pv}
Before the proof Theorem~\ref{th:flambda}, let us make tho following observation. The primitive function of $2 \lambda / (\lambda^2 - \xi^2)$ is $\log((\lambda + \xi) / (\lambda - \xi))$. This implies that for $\lambda > 0$,
\formula{
 \frac{1}{\pi} \, \pvint_0^\infty \frac{2\lambda}{\lambda^2 - \xi^2} \, d\xi & = 0 .
}
By a similar direct calculation (we omit the details), for $\lambda, \xi > 0$,
\formula{
 \frac{1}{\pi} \, \pvint_0^\infty \frac{\xi}{\xi^2 + \zeta^2} \, \frac{2\lambda}{\lambda^2 - \zeta^2} \, d\zeta & = \frac{\lambda}{\lambda^2 + \xi^2} \, .
}
Hence, formulae of Theorem~\ref{th:flambda} take a simpler form
\formula[eq:theta-def-alt]{
 \thet_\lambda & = \arctan\expr{-\frac{1}{\pi} \, \pvint_0^\infty \frac{\Psi'(\lambda)}{\Psi(\lambda) - \Psi(\xi)} \, d\xi} ,
}
and
\formula[eq:f-laplace-alt]{
\begin{aligned}
 \int_{-\infty}^\infty F_\lambda(x) e^{-\xi |x|} dx & = 2 \, \frac{\xi \sin \thet_\lambda}{\xi^2 + \lambda^2} \\
 & \hspace*{-4em} + \frac{2 \cos \thet_\lambda}{\pi} \, \pvint_0^\infty \frac{\xi}{\xi^2 + \zeta^2} \frac{\Psi'(\lambda)}{\Psi(\lambda) - \Psi(\zeta)} \, d\zeta .
\end{aligned}
}
\end{remark}

\begin{proof}[Proof of Theorem~\ref{th:flambda}]
Since $\Psi(\xi)$ is smooth near $\xi = \lambda$, the integrand in~\eqref{eq:theta-def} is a bounded function, and the integral is finite by the assumption. By a similar argument, the right-hand side of~\eqref{eq:g-def} is a bounded integrable function. Hence, \eqref{eq:g-def} indeed defines an $L^2(\R) \cap C_0(\R)$ function $G_\lambda(x)$, and so $F_\lambda(x)$ is well defined and belongs to $C_b(\R)$.

Denote
\formula[eq:k0]{
 K_\lambda & = -\frac{1}{\pi} \, \pvint_0^\infty \frac{\Psi'(\lambda) d\xi}{\Psi(\lambda) - \Psi(\xi)} \, , && \lambda > 0 ,
}
so that $\thet_\lambda = \arctan(K_\lambda)$ (by~\eqref{eq:theta-def-alt}). Let
\formula[eq:abtheta]{
 a_\lambda & = \cos \thet_\lambda = \frac{1}{\sqrt{1 + K_\lambda^2}} \, , & b_\lambda & = \sin \thet_\lambda = \frac{K_\lambda}{\sqrt{1 + K_\lambda^2}} \, .
}
Note that $\fourier G_\lambda(\xi) = a_\lambda (2 \lambda / (\lambda^2 - \xi^2) - \Psi'(\lambda) / (\Psi(\lambda) - \Psi(\xi)))$. We define
\formula{
 F_\lambda^*(x) = \sin(|\lambda x| + \thet_\lambda) = a_\lambda \sin |\lambda x| + b_\lambda \cos(\lambda x) ,
}
so that $F_\lambda(x) = F_\lambda^*(x) - G_\lambda(x)$.

By Fourier inversion formula,
\formula{
 G_\lambda(0) & = \frac{1}{2 \pi} \int_{-\infty}^\infty \fourier G_\lambda(\xi) d \xi \\
 & = \frac{a_\lambda}{\pi} \expr{\pvint_0^\infty \frac{2 \lambda}{\lambda^2 - \xi^2} \, d\xi - \pvint_0^\infty \frac{\Psi'(\lambda)}{\Psi(\lambda) - \Psi(\xi)} \, d\xi} .
}
The first principal value integral vanishes (cf. Remark~\ref{rem:pv}). It follows that
\formula{
 G_\lambda(0) & = a_\lambda K_\lambda = \cos(\thet_\lambda) \tan(\thet_\lambda) = \sin(\thet_\lambda) = F_\lambda^*(0) ,
}
and therefore $F_\lambda(0) = 0$, as desired.

Note that $2 \pi \delta_\lambda(\xi)$ is the Fourier transform of $e^{-i \lambda x}$, while $\pv 2 i / (\lambda - \xi)$ is the Fourier transform of $-e^{-i \lambda x} \sign x$. Hence, $\pi (\delta_{-\lambda}(\xi) + \delta_\lambda(\xi))$ is the Fourier transform of $\cos(\lambda x)$, and $\pv (2 \lambda / (\lambda^2 - \xi^2))$ is the Fourier transform of $\sin |\lambda x|$. It follows that,
\formula{
 \fourier F_\lambda(\xi) & = \fourier F_\lambda^*(\xi) - \fourier G_\lambda(\xi) \\
 & = \expr{\pi b_\lambda (\delta_{-\lambda}(\xi) + \delta_\lambda(\xi)) + a_\lambda \pv \frac{2 \lambda}{\lambda^2 - \xi^2}} \\
 & \hspace*{10em} - a_\lambda \expr{\frac{2 \lambda}{\lambda^2 - \xi^2} - \frac{\Psi'(\lambda)}{\Psi(\lambda) - \Psi(\xi)}} \\
 & = a_\lambda \pv \frac{\Psi'(\lambda)}{\Psi(\lambda) - \Psi(\xi)} + \pi b_\lambda (\delta_\lambda(\xi) + \delta_{-\lambda}(\xi)) ,
}
and~\eqref{eq:f-fourier} is proved. As any bounded function, $F_\lambda$ is $\schwartz'$-convolvable with $A$, and by the exchange formula we have (see Preliminaries)
\formula{
 \fourier (A * F_\lambda + \Psi(\lambda) F_\lambda)(\xi) & = (-\Psi(\xi) + \Psi(\lambda)) \fourier F_\lambda(\xi) = a_\lambda \Psi'(\lambda) .
}
In particular, $A * F_\lambda + \Psi(\lambda) F_\lambda = a_\lambda \Psi'(\lambda) \delta_0$, which proves~\eqref{eq:f-gen}. Formula~\eqref{eq:f-laplace-alt} follows from~\eqref{eq:f-fourier} by the exchange formula, since the Fourier transform of $e^{-a |x|}$ is $2 a / (a^2 + \xi^2)$. By Remark~\ref{rem:pv}, \eqref{eq:f-laplace} follows. It remains to prove that $F_\lambda \in \domain(\A_\cop; L^\infty)$ and that~\eqref{eq:f-eigen} holds true. 

By~\eqref{eq:f-gen}, $F_\lambda$ is a distributional eigenfunction of $\A_\cop$. Furthermore, $F_\lambda \in C_b(\R)$, $F_\lambda(0) = 0$ and $F_\lambda(x) - F_\lambda^*(x) = G_\lambda(x)$ converges to $0$ as $x \to \pm \infty$. By Lemma~\ref{lem:eigenfunctions}, $F_\lambda \in \domain(\A_\cop; L^\infty)$ and $\A_\cop F_\lambda = -\Psi(\lambda) F_\lambda$. Finally, for $x \in \cop$ and $t > 0$ we have (see, for example, \cite{bib:d65})
\formula{
 P^\cop_t F_\lambda(x) & = F_\lambda(x) + \int_0^t P^\cop_s \A_\cop F_\lambda(x) ds \\
 & = F_\lambda(x) - \Psi(\lambda) \int_0^t P^\cop_s F_\lambda(x) ds .
}
For a fixed $x \in \cop$, this integral equation is easily solved, and we obtain $P^\cop_t F_\lambda(x) = e^{-t \Psi(\lambda)} F_\lambda(x)$, as desired.
\end{proof}

Suppose that $\Psi(\xi) = \psi(\xi^2)$ for some function $\psi(\xi)$ ($\xi > 0$). Note that with the notation of~\eqref{eq:psilambda}, we have
\formula[eq:fourierg]{
\begin{aligned}
 \fourier G_\lambda(\xi) & = a_\lambda \expr{\frac{2 \lambda}{\lambda^2 - \xi^2} - \frac{2 \lambda \psi'(\lambda^2)}{\psi(\lambda^2) - \psi(\xi^2)}} \\
 & = a_\lambda \, \frac{2 \lambda}{\lambda^2 - \xi^2} \expr{1 - \frac{\psi_\lambda(\xi^2)}{\psi_\lambda(\lambda^2)}} = \frac{2 a_\lambda}{\lambda} \, \frac{1}{(\psi_\lambda)_\lambda(\xi^2)} \, .
\end{aligned}
}

\begin{proof}[Proof of Theorem~\ref{th:glambda}]
\emph{Part (a).}
Let $\psi(\xi) = \Psi(\sqrt{\xi})$. Then we have $\psi'(\xi) > 0$ and $\psi''(\xi) \le 0$ for $\xi > 0$; hence, $\psi$ is increasing and concave. First, we prove that $\thet_\lambda \ge 0$. Let $K_\lambda$ be defined by~\eqref{eq:k0}. Observe that since $\psi_\lambda(\xi^2)$ is infinitely smooth near $\xi = \lambda$, we have
\formula{
 K_\lambda & = - \frac{1}{\pi} \, \frac{\lambda^2 \Psi'(\lambda)}{\Psi(\lambda)} \, \pvint_0^\infty \frac{\psi_\lambda(\xi^2)}{\lambda^2 - \xi^2} \, d\xi \\
 & = - \frac{1}{\pi} \, \lim_{q \to 1^-} \frac{\lambda^2 \Psi'(\lambda)}{\Psi(\lambda)} \, \expr{\int_0^{q \lambda} + \int_{\lambda / q}^\infty} \frac{\psi_\lambda(\xi^2)}{\lambda^2 - \xi^2} \, d\xi .
}
By substituting $\xi = \lambda s$ for $\xi < \lambda$ and $\xi = \lambda / s$ for $\xi > \lambda$, and then combining the two integrals, we obtain that
\formula{
 K_\lambda & = \frac{1}{\pi} \, \frac{\lambda \Psi'(\lambda)}{\Psi(\lambda)} \, \int_0^1 \frac{(\psi_\lambda(\lambda^2 / s^2) - \psi_\lambda(\lambda^2 s^2))}{1 - s^2} \, ds .
}
Since $\psi$ is concave, $\psi_\lambda$ is a non-decreasing function (e.g. by a geometric argument: $\lambda^{-2} \psi(\lambda^2) / \psi_\lambda(\xi) = (\psi(\xi^2) - \psi(\lambda^2)) / (\xi^2 - \lambda^2)$ is the difference quotient of $\psi$, and hence a non-increasing function of $\xi$). It follows that $K_\lambda \ge 0$, and therefore $\thet_\lambda = \arctan(K_\lambda) \ge 0$. Monotonicity of $\psi_\lambda$ and formula~\eqref{eq:fourierg} imply also that $\fourier G_\lambda(\xi) \ge 0$ for all $\xi$, which, by Fourier inversion formula, proves that $|G_\lambda(\xi)| \le G_\lambda(0)$.

\emph{Part (b).}
Suppose now that $\psi$ defined as above is a complete Bernstein function. Clearly, $\psi$ is concave, so, by part~(a), $\thet_\lambda \in [0, \pi/2)$. The proof of~\eqref{eq:g} is very similar to the proof of formula~(1.4) in~\cite{bib:k10}.

Let $\tilde{G}_\lambda(x) = \ind_{(0, \infty)}(x) G_\lambda(x)$. We will show that $\laplace \tilde{G}_\lambda$ is a Stieltjes function. For $a > 0$, the Fourier transform of $e^{-a |x|}$ is $2 a / (a^2 + \xi^2)$. Hence, by Plancherel's theorem, for $\xi > 0$ we have
\formula{
 \laplace \tilde{G}_\lambda(\xi) & = \frac{1}{2} \int_{-\infty}^\infty e^{-\xi |x|} G_\lambda(x) dx = \frac{1}{\pi} \int_0^\infty \frac{\xi \fourier G_\lambda(\zeta)}{\xi^2 + \zeta^2} \, d\zeta .
}
Substituting $\zeta = s \sqrt{\xi}$, we obtain
\formula[eq:lapg]{
 \laplace \tilde{G}_\lambda(\xi) & = \frac{1}{\pi} \int_0^\infty \frac{\sqrt{\xi} \fourier G_\lambda(s \sqrt{\xi})}{\xi + s^2} \, ds .
}
By~\eqref{eq:fourierg},
\formula{
 \fourier G_\lambda(\xi) & = \frac{2 \cos \thet_\lambda}{\lambda} \, \frac{1}{(\psi_\lambda)_\lambda(\xi^2)} \, .
}
By Proposition~\ref{prop:psilambda}, $\psi_\lambda$ and $(\psi_\lambda)_\lambda$ are complete Bernstein functions. Hence, by Proposition~\ref{prop:cbf:defs}(c) $1 / (\psi_\lambda)_\lambda$ is a Stieltjes function, bounded on $(0, \infty)$, and so $\fourier G_\lambda$ extends to a holomorphic function in the right complex half-plane $\re \xi > 0$. Abusing the notation, we denote this extension with the same symbol $\fourier G_\lambda$. By Proposition~\ref{prop:cbf:ests}(b), $\fourier G_\lambda(\xi)$ is bounded on every region $-\pi/2 + \eps \le \Arg \xi \le \pi/2 - \eps$. It follows that the right-hand side of~\eqref{eq:lapg} defines a holomorphic function in $\C \setminus (-\infty, 0]$, which is the holomorphic extension of $\laplace \tilde{G}_\lambda$. We denote this extension by $g(\xi)$.

Assume that $\im \xi > 0$ and $\re \xi < 0$. By substituting $s = -\zeta \sqrt{\xi}$ ($\zeta$ is a complex variable here) in the right-hand side of~\eqref{eq:lapg}, we obtain
\formula{
 g(\xi) & = -\frac{1}{\pi} \int_0^{-\xi^{-1/2} \infty} \frac{\fourier G_\lambda(-\xi \zeta)}{1 + \zeta^2} \, d\zeta .
}
The integrand has a simple pole at $\zeta = i$, with the corresponding residue $(-i/2) \fourier G_\lambda(-i \xi)$. By an appropriate contour integration, the residue theorem and passage to a limit (for the details, see Lemma~3.8 in~\cite{bib:k10}),
\formula[eq:gbd]{
 g(\xi) & = -\frac{1}{\pi} \int_0^\infty \frac{\fourier G_\lambda(-\xi \zeta)}{1 + \zeta^2} \, d\zeta + \fourier G_\lambda(-i \xi) .
}
When $\im \xi > 0$, $\re \xi > 0$, we have in a similar manner
\formula[eq:gbd2]{
 g(\xi) & = \frac{1}{\pi} \int_0^\infty \frac{\fourier G_\lambda(\xi \zeta)}{1 + \zeta^2} \, d\zeta .
}
Recall that $\fourier G_\lambda(\zeta) = (2 \lambda^{-1} \cos \thet_\lambda) / (\psi_\lambda)_\lambda(\zeta^2)$, and $1 / (\psi_\lambda)_\lambda$ is a Stieltjes function. Hence, $\im \fourier G_\lambda(\zeta) > 0$ when $\re \zeta > 0$ and $\im \zeta > 0$, and $\im \fourier G_\lambda(\zeta) < 0$ when $\re \zeta > 0$ and $\im \zeta < 0$. By Proposition~\ref{prop:cbf:defs}(b), formulae~\eqref{eq:gbd} and~\eqref{eq:gbd2} imply that $g = \laplace \tilde{G}_\lambda$ is indeed a Stieltjes function. Hence, by Proposition~\ref{prop:cbf:defs}(d), $\tilde{G}_\lambda$ is the Laplace transform of a Radon measure $\gamma_\lambda$ on $[0, \infty)$. Since $\tilde{G}_\lambda \in L^2(\R) \cap L^\infty(\R)$, $\gamma_\lambda$ does not charge $\{0\}$, and it is a finite measure.

The assumption for the last part of theorem can be rephrased as follows: the holomorphic extension of $\psi$ to the upper complex half-plane has continuous boundary limit on $(-\infty, 0)$, which will be denoted by $\psi^+$, and $\psi^+(-\xi^2) \ne \psi(\lambda^2)$ for all $\xi > 0$. In this case, by~\eqref{eq:gbd} and~\eqref{eq:fourierg}, the continuous boundary limit $g^+$ of $g$ on $(-\infty, 0)$ approached from the upper half-plane exists, and it satisfies
\formula{
 g^+(-\xi) & = -\frac{1}{\pi} \int_0^\infty \frac{\fourier G_\lambda(\xi \zeta)}{1 + \zeta^2} \, d\zeta + a_\lambda \expr{\frac{2 \lambda}{\lambda^2 + \xi^2} - \frac{2 \lambda \psi'(\lambda^2)}{\psi(\lambda^2) - \psi^+(-\xi^2)}} .
}
By Proposition~\ref{prop:cbf:repr}(b), it follows that
\formula{
 \gamma_\lambda(d\xi) & = \frac{1}{\pi} \, \im g^+(-\xi) d\xi = \frac{2 \lambda \psi'(\lambda^2) a_\lambda}{\pi} \, \im \frac{1}{\psi(\lambda^2) - \psi^+(-\xi^2)} \, d\xi .
}
Formula~\eqref{eq:g} is proved.
\end{proof}

%
%                            ---------- o ----------
%

\section{Eigenfunction expansion in $\cop$}
\label{sec:spectral}

In this section we prove Theorem~\ref{th:spectral}, which states that the system of generalised eigenfunctions $F_\lambda(x)$ and $\sin(\lambda x)$, $\lambda > 0$, is complete in $L^2(\cop)$. Throughout this section assumptions of Theorem~\ref{th:flambda} are in force, that is, $\Psi(\xi)$ is the L{\'e}vy-Khintchine exponent of a symmetric L{\'e}vy process $X_t$, $1 / (1 + \Psi(\xi))$ is integrable, and $\Psi'(\xi) > 0$ for $\xi > 0$. Our argument is modelled after the proof of Theorem~1.9 in~\cite{bib:kmr11}, providing a similar result for the half-line. Noteworthy, in contrast to the previous section, here the case of $\cop$ appears to require essentially more work than the half-line $(0, \infty)$. 

We begin with some auxiliary definitions and four technical lemmas. For $z \in \C \setminus (-\infty, 0]$, we define
\formula{
 \ph(z) & = \frac{1}{\pi} \int_0^\infty \frac{1}{\Psi(\zeta) + z} \, d\zeta , \\
 \ph(\xi, z) & = \frac{1}{\pi} \int_0^\infty \frac{\xi^2}{\xi^2 + \zeta^2} \, \frac{1}{\Psi(\zeta) + z} \, d\zeta , \\
 \ph(\xi_1, \xi_2, z) & = \frac{1}{\pi} \int_0^\infty \frac{\xi_1^2}{\xi_1^2 + \zeta^2} \, \frac{\xi_2^2}{\xi_2^2 + \zeta^2} \, \frac{1}{\Psi(\zeta) + z} \, d\zeta .
}
The integrals are convergent by integrability of $1 / (1 + \Psi(\xi))$. By a substitution $\Psi(\zeta) = s$, one easily sees that $\ph(z)$, $\ph(\xi, z)$ and $\ph(\xi_1, \xi_2, z)$ are Stieltjes functions of $z$. Representing measures (as in Proposition~\ref{prop:cbf:repr}) for these and related Stieltjes functions play an important role in the sequel.

We remark that the above three functions are related to the resolvent (or $z$-potential) kernel $u_z(x)$ of the operator $\A$, for example, $\ph(z) = u_z(0)$ and $\ph(\xi, z) / \xi = \int_{-\infty}^\infty u_z(x) e^{-\xi |x|} dx$ (see Preliminaries). These connections are only used in the proof of Theorem~\ref{th:tau}.

\begin{lemma}
\label{lem:1}
For any $\xi > 0$, the function $\ph(\xi, z) / \ph(z)$ is a Stieltjes function of $z$.
\end{lemma}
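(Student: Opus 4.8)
The plan is to exhibit $\ph(\xi,z)/\ph(z)$ as a ratio that collapses, after a change of variables, into the form $\psi_\mu(\cdot)$ or a composition thereof, and then invoke Proposition~\ref{prop:psilambda} and Proposition~\ref{prop:cbf:defs}(c). First I would record that both $\ph(z)$ and $\ph(\xi,z)$ are Stieltjes functions of $z$: substituting $\Psi(\zeta)=s$ turns each integral into $\frac{1}{\pi}\int \frac{1}{s+z}\,\nu(ds)$ for an appropriate nonnegative measure $\nu$ on $(0,\infty)$ (for $\ph(\xi,z)$ the density carries the extra bounded factor $\xi^2/(\xi^2+\zeta^2)$, still nonnegative), so both are Stieltjes by \eqref{eq:stieltjes}. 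Since the quotient of two Stieltjes functions need not be Stieltjes in general, the content of the lemma is the special algebraic structure here.

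The key computation is the partial-fraction identity
\formula{
 \frac{\xi^2}{\xi^2+\zeta^2}\cdot\frac{1}{\Psi(\zeta)+z}
 & = \frac{1}{\Psi(\zeta)+z} - \frac{\zeta^2}{\xi^2+\zeta^2}\cdot\frac{1}{\Psi(\zeta)+z},
}
which gives $\ph(\xi,z) = \ph(z) - \ph_1(\xi,z)$, where $\ph_1(\xi,z) = \frac{1}{\pi}\int_0^\infty \frac{\zeta^2}{\xi^2+\zeta^2}\cdot\frac{d\zeta}{\Psi(\zeta)+z}$. Hence
\formula{
 \frac{\ph(\xi,z)}{\ph(z)} & = 1 - \frac{\ph_1(\xi,z)}{\ph(z)},
}
so it suffices to show $\ph_1(\xi,z)/\ph(z)$ is a complete Bernstein function of $z$ (by Proposition~\ref{prop:cbf:defs}(a), a CBF is nonnegative and maps the half-planes into themselves; subtracting from the constant $1$ flips the half-plane behaviour, and one checks nonnegativity of $1-\ph_1/\ph$ directly from $0\le\xi^2/(\xi^2+\zeta^2)\le1$, giving $0\le\ph(\xi,z)\le\ph(z)$; a bounded-at-$\infty$, vanishing-at-$0$ function with these properties is Stieltjes via Proposition~\ref{prop:cbf:defs}(b), possibly after noting it is $1/(\text{CBF})$ up to the subtraction — I would phrase the final sign bookkeeping carefully). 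Alternatively, and perhaps more cleanly, I would argue: $z\mapsto 1/\ph(z)$ is a CBF (Stieltjes reciprocal, Proposition~\ref{prop:cbf:defs}(c)), and $z\mapsto \ph(\xi,z)$ is Stieltjes; and the \emph{product} of a CBF and a Stieltjes function, while not automatically Stieltjes, \emph{is} here because the two functions share the same representing measure up to the bounded multiplier $\xi^2/(\xi^2+\zeta^2)$ — which is exactly the hypothesis that makes the Wiener–Hopf-free argument of~\cite{bib:k10} go through. I expect the cleanest route is to use the identity $\ph(\xi,z) = \ph(z) - \ph_1(\xi,z)$ together with the observation that $\ph_1(\xi,z)/\ph(z)$ is a CBF because, writing things in terms of $\psi(\xi)=\Psi(\sqrt\xi)$ and the measure $\nu$, $\ph_1/\ph$ is of the form $\psi_\mu$ for a CBF built from $\psi$, matching Proposition~\ref{prop:psilambda}.

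The main obstacle I anticipate is the last step: verifying that $\ph_1(\xi,z)/\ph(z)$ really lands in the CBF (equivalently Stieltjes, after the subtraction) class, rather than merely being a bounded nonnegative function of $z$. The natural tool is the holomorphic characterisation Proposition~\ref{prop:cbf:defs}(a)–(b): I would extend $\ph(z)$, $\ph(\xi,z)$ to $z\in\C\setminus(-\infty,0]$ (legitimate since $\Psi(\zeta)\ge0$, so $\Psi(\zeta)+z$ never vanishes there), show $\ph(\xi,z)$ maps the upper half-plane to the lower one and $\ph(z)$ does likewise, and conclude their quotient maps the upper half-plane to itself, hence is either constant or a function whose reciprocal-type behaviour at $0$ and $\infty$ pins it down as Stieltjes. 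Checking the boundary/growth behaviour ($\lim_{z\to0^+} z\cdot(\ph(\xi,z)/\ph(z))$ and $\lim_{z\to\infty}$) to identify the constants $\tilde c_1,\tilde c_2$ is the routine-but-delicate part; the sign and half-plane bookkeeping across the subtraction $1-\ph_1/\ph$ is where I would be most careful.
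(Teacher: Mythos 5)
Your reduction is clean as far as it goes: writing $\ph(\xi,z)=\ph(z)-\ph_1(\xi,z)$ with $\ph_1(\xi,z)=\frac1\pi\int_0^\infty\frac{\zeta^2}{\xi^2+\zeta^2}\frac{d\zeta}{\Psi(\zeta)+z}$ is correct, and you are right that it suffices to check the sign of $\im\bigl(\ph(\xi,z)/\ph(z)\bigr)$ on the upper half-plane and then invoke Proposition~\ref{prop:cbf:defs}(b) (positivity on $(0,\infty)$ being immediate from $0\le\xi^2/(\xi^2+\zeta^2)\le1$). But the central claim — that $\ph_1/\ph$ is a CBF, equivalently that $\im\bigl(\ph(\xi,z)/\ph(z)\bigr)\le0$ for $\im z>0$ — is asserted in several forms and never proved. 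Neither appeal you sketch closes this: the remark about ``the same representing measure up to a bounded multiplier'' is not a theorem about Stieltjes quotients, and the attempted match with Proposition~\ref{prop:psilambda} does not work here, since $\psi_\lambda$ is built from a single CBF while $\ph_1/\ph$ involves an extra multiplier $\zeta^2/(\xi^2+\zeta^2)$ that is not a function of $\Psi(\zeta)$.

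Worse, the argument you call cleanest in your last paragraph is actually false as an implication. Knowing that $\ph(\xi,\cdot)$ and $\ph(\cdot)$ both send the upper half-plane into the lower one tells you nothing about their quotient: both arguments lie in $(-\pi,0)$, so their difference ranges over all of $(-\pi,\pi)$, and the quotient of two Stieltjes functions need not be Stieltjes nor CBF — a fact you yourself flag at the start. The ingredient you are missing is the one that actually drives the paper's proof: expand
$\im\ph(\xi,z)\,\re\ph(z)-\re\ph(\xi,z)\,\im\ph(z)$
as a double integral over $\zeta_1,\zeta_2$, symmetrise in $\zeta_1\leftrightarrow\zeta_2$, and observe that the resulting integrand contains the product
$\bigl(\tfrac{\xi^2}{\xi^2+\zeta_1^2}-\tfrac{\xi^2}{\xi^2+\zeta_2^2}\bigr)\bigl(\Psi(\zeta_1)-\Psi(\zeta_2)\bigr)$,
which is everywhere $\le0$ because $\Psi$ is increasing while $\xi^2/(\xi^2+\zeta^2)$ is decreasing. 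This Chebyshev-type correlation inequality is exactly what pins down the sign; without it the proposal does not reach the conclusion.
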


\begin{proof}
Clearly, $\ph(\xi, z) / \ph(z) > 0$ for $z > 0$. By Proposition~\ref{prop:cbf:defs}(b), it suffices to show that $\im (\ph(\xi, z) / \ph(z)) \le 0$ when $\im z > 0$; then automatically $\im (\ph(\xi, z) / \ph(z)) = -\im (\ph(\xi, \bar{z}) / \ph(\bar{z})) \ge 0$ when $\im z < 0$. Let $z = x + i y$ for real $x$, $y$. We have
\formula{
 \re \frac{1}{\Psi(\zeta) + z} & = \frac{\Psi(\zeta) + x}{(\Psi(\zeta) + x)^2 + y^2} \, , & \im \frac{1}{\Psi(\zeta) + z} & = -\frac{y}{(\Psi(\zeta) + x)^2 + y^2} \, .
}
Hence, by expanding the integrals and a short calculation, we obtain
\formula{
 \im \frac{\ph(\xi, z)}{\ph(z)} & = \frac{\im \ph(\xi, z) \re \ph(z) - \re \ph(\xi, z) \im \ph(z)}{|\ph(z)|^2} \\
 & = \frac{1}{\pi^2 |\ph(z)|^2} \int_0^\infty \int_0^\infty \frac{\xi^2}{\xi^2 + \zeta_1^2} \times \hspace*{0em} \\
 & \hspace*{5em} \times \frac{y (\Psi(\zeta_1) - \Psi(\zeta_2))}{((\Psi(\zeta_1) + x)^2 + y^2) ((\Psi(\zeta_2) + x)^2 + y^2)} \, d\zeta_1 d\zeta_2 .
}
Another version of the above formula is obtained by changing the roles of $\zeta_1$ and $\zeta_2$. By adding the sides of the two versions of the formula, we obtain a symmetrised version,
\formula{
 2 \im \frac{\ph(\xi, z)}{\ph(z)} & = \frac{1}{\pi^2 |\ph(z)|^2} \int_0^\infty \int_0^\infty \expr{\frac{\xi^2}{\xi^2 + \zeta_1^2} - \frac{\xi^2}{\xi^2 + \zeta_2^2}} \times \hspace*{0em} \\ & \hspace*{4.5em} \times \frac{y (\Psi(\zeta_1) - \Psi(\zeta_2))}{((\Psi(\zeta_1) + x)^2 + y^2) ((\Psi(\zeta_2) + x)^2 + y^2)} \, d\zeta_1 d\zeta_2 .
}
Note that $\Psi(\zeta)$ is increasing on $(0, \infty)$, while $\xi^2 / (\xi^2 + \zeta^2)$ decreases with $\zeta > 0$. It follows that when $y > 0$, the integrand on the right-hand side is non-positive, and so $\im (\ph(\xi, z) / \ph(z)) \le 0$.
\end{proof}

\begin{lemma}
\label{lem:2}
For any $\xi_1, \xi_2 > 0$, the function
\formula{
 \tilde{\ph}(\xi_1, \xi_2, z) & = \ph(\xi_1, \xi_2, z) - \frac{\ph(\xi_1, z) \ph(\xi_2, z)}{\ph(z)}
}
is a Stieltjes function of $z$.
\end{lemma}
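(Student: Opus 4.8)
The plan is to represent $\tilde{\ph}(\xi_1,\xi_2,z)$ as a non-negative superposition, over an auxiliary pair of parameters, of very simple Stieltjes functions of $z$, and then invoke the stability of the Stieltjes class under such superpositions. First I would perform the substitution $s = \Psi(\zeta)$, which is an increasing bijection of $(0,\infty)$ onto itself (since $\Psi$ is increasing, $\Psi(0^+) = 0$, and $\Psi(\zeta) \to \infty$ by integrability of $1/(1+\Psi)$); call $\zeta(s)$ its inverse. It turns each of $\ph(z)$, $\ph(\xi_1,z)$, $\ph(\xi_2,z)$, $\ph(\xi_1,\xi_2,z)$ into an integral $\tfrac1\pi \int_0^\infty \tfrac{F(s)}{s+z}\,\nu(ds)$ with one and the same positive measure $\nu(ds) = (\Psi'(\zeta(s)))^{-1}\,ds$ — which satisfies $\int_0^\infty (1+s)^{-1}\nu(ds) = \int_0^\infty (1+\Psi(\zeta))^{-1}\,d\zeta < \infty$, so all the integrals below are absolutely convergent for $z \in \C \setminus (-\infty,0]$ — and with $F$ equal to $1$, $F_1$, $F_2$, $F_1 F_2$ respectively, where $F_j(s) = \xi_j^2/(\xi_j^2 + \zeta(s)^2)$ is a continuous, strictly decreasing function on $(0,\infty)$ with $F_j(0^+)=1$ and $F_j(\infty)=0$. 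Note that $\ph(\xi_1,\xi_2,z)\ph(z) - \ph(\xi_1,z)\ph(\xi_2,z)$ is bilinear in $(F_1,F_2)$, while $\ph(z)$ does not involve $F_1$ or $F_2$.

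Next I would write $F_j(s) = \int_0^\infty \ind_{(0,t)}(s)\,\mu_j(dt)$ with $\mu_j = -dF_j$ a probability measure on $(0,\infty)$, insert this into the bilinear expression, and apply Fubini's theorem. For indicators $F_1 = \ind_{(0,a)}$, $F_2 = \ind_{(0,b)}$ with $0 < a \le b$ one has $\ind_{(0,a)}\ind_{(0,b)} = \ind_{(0,a)}$ and $\int_0^b = \int_0^\infty - \int_b^\infty$, so the bilinear expression collapses (for these indicators) to $\pi^{-2} A(z) B(z)$, where
\[ A(z) = \int_0^a \frac{\nu(ds)}{s+z}, \qquad B(z) = \int_b^\infty \frac{\nu(ds)}{s+z}, \qquad C(z) = \int_0^\infty \frac{\nu(ds)}{s+z} . \]
Dividing by $\ph(z) = \pi^{-1}C(z)$, this would give
\[ \tilde{\ph}(\xi_1,\xi_2,z) = \frac1\pi \int_0^\infty\!\!\int_0^\infty \Phi_{t_1 \wedge t_2,\, t_1 \vee t_2}(z)\, \mu_1(dt_1)\,\mu_2(dt_2), \qquad \Phi_{a,b}(z) = \frac{A(z)B(z)}{C(z)} . \]
Since a superposition of Stieltjes functions against a non-negative measure is again Stieltjes, and $0 \le \Phi_{a,b} \le C$ while the $\mu_j$ are finite (so the superposition converges), it then suffices to show that each $\Phi_{a,b}$, $0 < a \le b$, is a Stieltjes function of $z$.

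For this I would check the criterion of Proposition~\ref{prop:cbf:defs}(b). Positivity of $\Phi_{a,b}$ on $(0,\infty)$, non-constancy, and holomorphy on $\C \setminus (-\infty,0]$ are immediate, the last because $A$, $B$, $C$ extend holomorphically there and $C$, being a non-constant Stieltjes function, has no zeros in $\C \setminus (-\infty,0]$. The substantial point is that $\im \Phi_{a,b}(z) < 0$ when $\im z > 0$. The key observation is that for fixed $z$ with $\im z > 0$ the map $s \mapsto \arg(s+z)^{-1}$ is strictly increasing with values in $(-\pi,0)$; hence, writing $M(z) = \int_a^b (s+z)^{-1}\nu(ds)$, the elementary fact that the $\nu$-integral of a family of complex numbers lying in a convex cone of opening $< \pi$ lies in that cone gives $\arg A \le \arg(a+z)^{-1} \le \arg M \le \arg(b+z)^{-1} \le \arg B$, with all three of $\arg A$, $\arg M$, $\arg B$ in $(-\pi,0)$. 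Expanding $AB\overline{C} = |A|^2 B + A|B|^2 + AB\overline{M}$, the first two summands have negative imaginary part (as $\im A, \im B < 0$), and $\arg(AB\overline{M}) = \arg A + \arg B - \arg M$ lies between $\arg A$ and $\arg B$, hence in $(-\pi,0)$, so the third summand has negative imaginary part too; therefore $\im \Phi_{a,b}(z) = |C(z)|^{-2}\im(A(z)B(z)\overline{C(z)}) < 0$.

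The step I expect to be the main obstacle is precisely this last argument: realising that one should first reduce $F_1, F_2$ to indicator functions — so that the ``covariance'' $\int F_1 F_2\,d\mu \int d\mu - \int F_1\,d\mu \int F_2\,d\mu$ (with $d\mu = \nu(ds)/(s+z)$) becomes the bare product $AB$ — and then exploiting the monotonicity of $s \mapsto \arg(s+z)^{-1}$ to order the arguments of $A$, $M$, $B$ and thereby pin down the sign of $\im(AB/C)$. For $z > 0$ one may, if preferred, avoid the reduction altogether and get non-negativity of $\tilde{\ph}$ directly from the identity $\int F_1 F_2\,d\mu \int d\mu - \int F_1\,d\mu \int F_2\,d\mu = \tfrac12 \iint (F_1(s_1) - F_1(s_2))(F_2(s_1) - F_2(s_2))\,d\mu(s_1)\,d\mu(s_2) \ge 0$, which holds because $F_1$ and $F_2$ are monotone in the same direction.
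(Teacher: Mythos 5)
Your proof is correct, and it takes a genuinely different route from the one in the paper. The paper's argument is built on the algebraic identity~\eqref{eq:threephi}, which expresses $\im(\ph(\xi_1,z)\ph(\xi_2,z)/\ph(z))$ as a sum of two terms: one controlled by Lemma~\ref{lem:1} (that $\ph(\xi,z)/\ph(z)$ is Stieltjes) together with the fact that $1/\ph(z)$ is a CBF, and one controlled by a symmetrised double-integral computation showing $\im\ph(\xi_1,z)\im\ph(\xi_2,z) \le \im\ph(z)\im\ph(\xi_1,\xi_2,z)$; positivity of $\tilde\ph$ on $(0,\infty)$ is then extracted via monotonicity and a limit as $z\to\infty$. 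You instead exploit the bilinearity of $\ph(\xi_1,\xi_2,z)\ph(z)-\ph(\xi_1,z)\ph(\xi_2,z)$ in the pair of kernels $(F_1,F_2)$ to reduce everything, by a superposition against the measures $-dF_j$, to the model case $F_j = \ind_{(0,t_j)}$, where the expression collapses to $A(z)B(z)/C(z)$ with $A$, $B$, $C$ the truncations of $\ph(z)$ to $(0,a)$, $(b,\infty)$, $(0,\infty)$; the sign of $\im(AB/C)$ in the upper half-plane then follows from the monotonicity of $s\mapsto\arg(s+z)^{-1}$ and the decomposition $AB\overline{C}=|A|^2B+A|B|^2+AB\overline{M}$. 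This route is more structural and is self-contained in the sense that it neither invokes Lemma~\ref{lem:1} nor requires the separate limiting argument for positivity on $(0,\infty)$ (which you obtain directly from $A,B,C>0$, or from the covariance inequality you note at the end); what it costs is the extra layer of the superposition and the justification, which you correctly flag via the bound $0\le\Phi_{a,b}\le C$, that the integral of Stieltjes functions against $\mu_1\otimes\mu_2$ is again Stieltjes. The paper's approach, by contrast, is shorter once Lemma~\ref{lem:1} is in hand and reuses the same symmetrisation device for both lemmas, so it fits better into the surrounding development even though each individual estimate is less transparent.
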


\begin{proof}
The argument is very similar to the proof of Lemma~\ref{lem:1}. For simplicity, we denote $\ph(\xi_1, z) = a_1 + i b_1$, $\ph(\xi_2, z) = a_2 + i b_2$, $\ph(z) = a + i b$ for real $a, a_1, a_2, b, b_1, b_2$. We have
\formula[eq:threephi]{
\begin{aligned}
 \im \frac{\ph(\xi_1, z) \ph(\xi_2, z)}{\ph(z)} \hspace*{-8em} & \hspace*{8em} = \frac{(a_1 b_2 + b_1 a_2) a - (a_1 a_2 - b_1 b_2) b}{a^2 + b^2} \\
 & = \frac{(a^2 + b^2) b_1 b_2 - (a b_1 - a_1 b) (a b_2 - a_2 b) }{b (a^2 + b^2)} \\
 & = \frac{\im \ph(\xi_1, z) \im \ph(\xi_2, z)}{\im \ph(z)} + \frac{\im (\ph(\xi_1, z) / \ph(z)) \im (\ph(\xi_2, z) / \ph(z))}{\im (1 / \ph(z))} \, .
\end{aligned}
}
By Lemma~\ref{lem:1}, $\ph(\xi_1, z) / \ph(z)$ and $\ph(\xi_2, z) / \ph(z)$ are Stieltjes functions, and by Proposition~\ref{prop:cbf:defs}(c), $1 / \ph(z)$ is a complete Bernstein function. Hence, the second summand on the right-hand side of~\eqref{eq:threephi} is non-negative when $\im z > 0$ (Proposition~\ref{prop:cbf:defs}(a,b)). The first one is non-positive, but below we prove that it is bounded below by $\im \ph(\xi_1, \xi_2, z)$.

For $z = x + i y$ with real $x$, $y$, we have as in the proof of Lemma~\ref{lem:1},
\formula{
 \hspace*{3em} & \hspace*{-3em} \im \ph(\xi_1, z) \im \ph(\xi_2, z) - \im \ph(z) \im \ph(\xi_1, \xi_2, z) \\
 & = \frac{1}{\pi^2} \int_0^\infty \int_0^\infty \frac{\xi_1^2}{\xi_1^2 + \zeta_1^2} \expr{\frac{\xi_2^2}{\xi_2^2 + \zeta_2^2} - \frac{\xi_2^2}{\xi_2^2 + \zeta_1^2}} \times \hspace*{0em} \\
 & \hspace*{5em} \times \frac{y^2}{((\Psi(\zeta_1) + x)^2 + y^2) ((\Psi(\zeta_2) + x)^2 + y^2)} \, d\zeta_1 d\zeta_2
}
By a similar symmetrisation procedure as in the proof of Lemma~\ref{lem:1}, we obtain that
\formula{
 \hspace*{3em} & \hspace*{-3em} 2 \im \ph(\xi_1, z) \im \ph(\xi_2, z) - 2 \im \ph(z) \im \ph(\xi_1, \xi_2, z) \\
 & = \frac{1}{\pi^2} \int_0^\infty \int_0^\infty \expr{\frac{\xi_1^2}{\xi_1^2 + \zeta_1^2} - \frac{\xi_1^2}{\xi_1^2 + \zeta_2^2}} \expr{\frac{\xi_2^2}{\xi_2^2 + \zeta_2^2} - \frac{\xi_2^2}{\xi_2^2 + \zeta_1^2}} \times \hspace*{0em} \\
 & \hspace*{5em} \times \frac{y^2}{((\Psi(\zeta_1) + x)^2 + y^2) ((\Psi(\zeta_2) + x)^2 + y^2)} \, d\zeta_1 d\zeta_2 ,
}
and again the integrand on the right-hand side is non-positive. Hence,
\formula{
 \im \ph(\xi_1, z) \im \ph(\xi_2, z) - \im \ph(z) \im \ph(\xi_1, \xi_2, z) & \le 0 .
}
When $y > 0$, we have $\im \ph(z) < 0$, and so
\formula{
 \frac{\im \ph(\xi_1, z) \im \ph(\xi_2, z)}{\im \ph(z)} & \ge \im \ph(\xi_1, \xi_2, z) .
}
By~\eqref{eq:threephi}, it follows that $\im \tilde{\ph}(\xi_1, \xi_2, z) \le 0$ whenever $\im z > 0$. Clearly, $\im \tilde{\ph}(\xi_1, \xi_2, z) = -\im \tilde{\ph}(\xi_1, \xi_2, \bar{z}) \ge 0$ when $\im z < 0$. By Proposition~\ref{prop:cbf:defs}(b), it remains to prove that $\tilde{\ph}(\xi_1, \xi_2, z) \ge 0$ when $z > 0$.

Since $\tilde{\ph}(\xi_1, \xi_2, z)$ is real when $z > 0$ and $\im \tilde{\ph}(\xi_1, \xi_2, z) \le 0$ when $\im z > 0$, we have $(\partial / \partial z) \tilde{\ph}(\xi_1, \xi_2, z) \le 0$ for $z > 0$. It follows that $\tilde{\ph}(\xi_1, \xi_2, z)$ is a non-increasing function of $z > 0$, and hence it suffices to show that $\lim_{z \to \infty} \tilde{\ph}(\xi_1, \xi_2, z) \ge 0$. When $z \to \infty$, by the definition of $\ph(\xi, z)$ and $\ph(\xi_1, \xi_2, z)$ and dominated convergence, the functions $\ph(\xi_1, z)$ and $\ph(\xi_1, \xi_2, z)$ converge to $0$. Also, $\ph(\xi_2, z) / \ph(z) \le 1$ for all $z > 0$. We conclude that $\tilde{\ph}(\xi_1, \xi_2, z)$ converges to $0$ as $z \to \infty$, and the proof is complete.
\end{proof}

For $\lambda, \xi > 0$, we define
\formula{
 K_\lambda(\xi) & = - \frac{1}{\pi} \, \pvint_0^\infty \frac{\xi^2}{\xi^2 + \zeta^2} \, \frac{\Psi'(\lambda)}{\Psi(\lambda) - \Psi(\zeta)} \, d\zeta , & L_\lambda(\xi) & = \frac{\xi^2}{\xi^2 + \lambda^2} \, ,
}
and, as in~\eqref{eq:k0},
\formula{
 K_\lambda & = -\frac{1}{\pi} \, \pvint_0^\infty \frac{\Psi'(\lambda) d\xi}{\Psi(\lambda) - \Psi(\xi)} \, .
}
%Note that $\lim_{\xi \to \infty} K_\lambda(\xi) = K_\lambda$ and $\lim_{\xi \to \infty} L_\lambda(\xi) = 1$. 
We denote the boundary limits of $\ph(z)$, $\ph(\xi, z)$ and $\ph(\xi_1, \xi_2, z)$ along $(-\infty, 0)$ approached from the upper half-plane by $\ph^+(-z)$, $\ph^+(\xi, -z)$ and $\ph^+(\xi_1, \xi_2, -z)$ ($z > 0$), respectively. The existence of these limits is a part of the next result.

\begin{lemma}
\label{lem:jump}
For $\xi, \lambda > 0$, we have
\formula{
 \ph^+(-\Psi(\lambda)) & = \frac{K_\lambda - i}{\Psi'(\lambda)} , & \ph^+(\xi, -\Psi(\lambda)) & = \frac{K_\lambda(\xi) - i L_\lambda(\xi)}{\Psi'(\lambda)} \, .
}
Furthermore (cf. Lemma~\ref{lem:1}),
\formula{
 \im \frac{\ph^+(\xi, -\Psi(\lambda))}{\ph^+(-\Psi(\lambda))} & = - \frac{K_\lambda L_\lambda(\xi) - K_\lambda(\xi)}{1 + K_\lambda^2} \, ,
}
and for $\xi_1, \xi_2, \lambda > 0$ (cf. Lemma~\ref{lem:2}),
\formula{
 & \im \tilde{\ph}^+(\xi_1, \xi_2, -\Psi(\lambda)) \\
 & \hspace*{3em} = - \frac{1}{\Psi'(\lambda)} \, \frac{K_\lambda L_\lambda(\xi_1) - K_\lambda(\xi_1)}{\sqrt{1 + K_\lambda^2}} \, \frac{K_\lambda L_\lambda(\xi_2) - K_\lambda(\xi_2)}{\sqrt{1 + K_\lambda^2}} \, .
}
\end{lemma}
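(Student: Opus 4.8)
The plan is to obtain all four identities by computing the boundary values of the Stieltjes functions $\ph$, $\ph(\xi,\cdot)$ and $\ph(\xi_1,\xi_2,\cdot)$ along $(-\infty,0)$ directly from their defining integrals via the Sokhotski--Plemelj jump formula, and then to reduce the last two assertions to elementary algebra.

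First I would compute the boundary values of $\ph$ and $\ph(\xi,\cdot)$. Fix $\lambda>0$ and put $z=-\Psi(\lambda)+i\eps$ with $\eps\to 0^+$. Since $\Psi$ is smooth on $(0,\infty)$ with $\Psi'(\lambda)>0$, the equation $\Psi(\zeta)=\Psi(\lambda)$ has the unique solution $\zeta=\lambda$ on $(0,\infty)$, and the change of variables $s=\Psi(\zeta)$ turns $\ph(z)$ into a Cauchy integral $\tfrac1\pi\int\tfrac{w(s)}{s+z}\,ds$ with density $w$ continuous near $s=\Psi(\lambda)$, where $w(\Psi(\lambda))=1/(\pi\Psi'(\lambda))$; integrability at the endpoints comes from integrability of $1/(1+\Psi)$. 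Plemelj's formula then gives
\[
 \ph^+(-\Psi(\lambda))=\frac1\pi\,\pvint_0^\infty\frac{d\zeta}{\Psi(\zeta)-\Psi(\lambda)}-\frac{i}{\Psi'(\lambda)}\,,
\]
and the principal value term equals $K_\lambda/\Psi'(\lambda)$ by the definition of $K_\lambda$, proving the first identity. Running the same argument with the factor $\xi^2/(\xi^2+\zeta^2)$ carried throughout, the residual $\delta$-mass at $\zeta=\lambda$ acquires the value $\xi^2/(\xi^2+\lambda^2)=L_\lambda(\xi)$ of that factor while the principal value term matches the integral defining $K_\lambda(\xi)$, so $\ph^+(\xi,-\Psi(\lambda))=(K_\lambda(\xi)-iL_\lambda(\xi))/\Psi'(\lambda)$, which is the second identity. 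The same reasoning applied to $\ph(\xi_1,\xi_2,\cdot)$ yields, for later use, $\im\ph^+(\xi_1,\xi_2,-\Psi(\lambda))=-L_\lambda(\xi_1)L_\lambda(\xi_2)/\Psi'(\lambda)$, since only the $\delta$-term affects the imaginary part.

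The third identity is then pure algebra: writing the quotient as $(K_\lambda(\xi)-iL_\lambda(\xi))/(K_\lambda-i)$ and multiplying through by the conjugate $(K_\lambda+i)$ one reads off that its imaginary part is $(K_\lambda(\xi)-K_\lambda L_\lambda(\xi))/(1+K_\lambda^2)$, as claimed. For the fourth, I would substitute the boundary values just obtained into $\tilde\ph(\xi_1,\xi_2,z)=\ph(\xi_1,\xi_2,z)-\ph(\xi_1,z)\ph(\xi_2,z)/\ph(z)$, take imaginary parts, and put everything over the common denominator $\Psi'(\lambda)(1+K_\lambda^2)$; the resulting numerator is
\[
 L_\lambda(\xi_1)L_\lambda(\xi_2)(1+K_\lambda^2)+K_\lambda(\xi_1)K_\lambda(\xi_2)-L_\lambda(\xi_1)L_\lambda(\xi_2)-K_\lambda\bigl(K_\lambda(\xi_1)L_\lambda(\xi_2)+L_\lambda(\xi_1)K_\lambda(\xi_2)\bigr),
\]
which, after cancelling $L_\lambda(\xi_1)L_\lambda(\xi_2)$ and collecting terms, factors as $\bigl(K_\lambda L_\lambda(\xi_1)-K_\lambda(\xi_1)\bigr)\bigl(K_\lambda L_\lambda(\xi_2)-K_\lambda(\xi_2)\bigr)$, giving exactly the stated expression for $\im\tilde\ph^+(\xi_1,\xi_2,-\Psi(\lambda))$.

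The only genuinely delicate step is the justification of the Plemelj computation, i.e.\ the existence of the pointwise boundary limits and the identification of their real parts with the principal value integrals. I expect to handle this as in the analogous lemmas of~\cite{bib:k10,bib:kmr11}: split the $\zeta$-integral into a small neighborhood of $\zeta=\lambda$, where the substitution $s=\Psi(\zeta)$ reduces matters to the classical Plemelj formula for a Cauchy integral with H\"older-continuous density, and its complement, where the integrand is uniformly bounded in $\eps$ so that dominated convergence applies. Everything after that is bookkeeping; in particular the continuity of $w$ near $\Psi(\lambda)$, and hence the applicability of Plemelj, rests on the smoothness of $\Psi$ at positive points, which is part of the standing assumptions of Theorem~\ref{th:flambda}.
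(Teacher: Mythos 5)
Your proposal follows the paper's proof essentially line for line: compute the boundary values of $\ph$, $\ph(\xi,\cdot)$ and $\ph(\xi_1,\xi_2,\cdot)$ via the Sokhotski--Plemelj jump formula (the paper cites Vladimirov~\cite{bib:v02}, \S1.8, rather than rederiving it through the substitution $s=\Psi(\zeta)$, but the content is the same), and then finish by elementary algebra. The only real difference is in the last identity: the paper reuses identity~\eqref{eq:threephi}, already established inside Lemma~\ref{lem:2}, to split $\im(\ph^+(\xi_1)\ph^+(\xi_2)/\ph^+)$, whereas you expand $(K_{\lambda}(\xi_1)-iL_{\lambda}(\xi_1))(K_{\lambda}(\xi_2)-iL_{\lambda}(\xi_2))(K_{\lambda}+i)$ directly; both are a few lines of bookkeeping and lead to the same factorisation, so the choice is purely stylistic. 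One small slip: the numerator you display for $\im\tilde{\ph}^+$ over the common denominator $\Psi'(\lambda)(1+K_\lambda^2)$ has the wrong overall sign --- it should be $-L_{\lambda}(\xi_1)L_{\lambda}(\xi_2)(1+K_\lambda^2)-K_{\lambda}(\xi_1)K_{\lambda}(\xi_2)+L_{\lambda}(\xi_1)L_{\lambda}(\xi_2)+K_\lambda\bigl(K_{\lambda}(\xi_1)L_{\lambda}(\xi_2)+L_{\lambda}(\xi_1)K_{\lambda}(\xi_2)\bigr)$, which factors as $-(K_\lambda L_{\lambda}(\xi_1)-K_{\lambda}(\xi_1))(K_\lambda L_{\lambda}(\xi_2)-K_{\lambda}(\xi_2))$ and hence gives the stated $\im\tilde{\ph}^+$; as written your version would lose the minus sign. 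This is a transcription error, not a gap in the argument.
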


\begin{proof}
For $\lambda > 0$ and any bounded, continuously differentiable function $f$ on $(0, \infty)$, we have (see Section~1.8 in~\cite{bib:v02})
\formula{
 \hspace*{3em} & \hspace*{-3em} \lim_{\eps \to 0^+} \int_0^\infty \frac{1}{\Psi(\zeta) - \Psi(\lambda) + i \eps} \, f(\zeta) d\zeta \\
 & = \lim_{\eps \to 0^+} \int_0^\infty \frac{\Psi(\zeta) - \Psi(\lambda)}{(\Psi(\zeta) - \Psi(\lambda))^2 + \eps^2} \, f(\zeta) d\zeta \\
 & \hspace*{5em} - \lim_{\eps \to 0^+} \int_0^\infty \frac{i \eps}{(\Psi(\zeta) - \Psi(\lambda))^2 + \eps^2} \, f(\zeta) d\zeta \\
 & = - \pvint_0^\infty \frac{1}{\Psi(\lambda) - \Psi(\zeta)} \, f(\zeta) d\zeta - \frac{i}{\Psi'(\lambda)} \, f(\lambda) .
}
Hence, the boundary limits $\ph^+(-\Psi(\lambda))$, $\ph^+(\xi, -\Psi(\lambda))$ and $\ph^+(\xi_1, \xi_2, -\Psi(\lambda))$ (approached from the upper half-plane; here and below $\xi, \xi_1, \xi_2 > 0$) are continuous functions of $\lambda > 0$, given by
\formula{
 \ph^+(-\Psi(\lambda)) & = - \frac{1}{\pi} \, \pvint_0^\infty \frac{1}{\Psi(\lambda) - \Psi(\zeta)} \, d\zeta - \frac{i}{\Psi'(\lambda)} = \frac{K_\lambda - i}{\Psi'(\lambda)} \, , \\
 \ph^+(\xi, -\Psi(\lambda)) & = - \frac{1}{\pi} \, \pvint_0^\infty \frac{\xi^2}{\xi^2 + \zeta^2} \, \frac{1}{\Psi(\lambda) - \Psi(\zeta)} \, d\zeta - \frac{i}{\Psi'(\lambda)} \, \frac{\xi^2}{\xi^2 + \lambda^2} \\
 & = \frac{K_\lambda(\xi) - i L_\lambda(\xi)}{\Psi'(\lambda)} \, ,
}
and (we only need the imaginary part)
\formula{
 \im \ph^+(\xi_1, \xi_2, -\Psi(\lambda)) & = -\frac{1}{\Psi'(\lambda)} \, \frac{\xi_1^2}{\xi_1^2 + \lambda^2} \, \frac{\xi_2^2}{\xi_2^2 + \lambda^2} = -\frac{L_\lambda(\xi_1) L_\lambda(\xi_2)}{\Psi'(\lambda)} \, .
}
We obtain that
\formula{
 \im \frac{\ph^+(\xi, -\Psi(\lambda))}{\ph^+(-\Psi(\lambda))} & = - \frac{K_\lambda L_\lambda(\xi) - K_\lambda(\xi)}{1 + K_\lambda^2} \, ,
}
and therefore, by~\eqref{eq:threephi},
\formula{
 \hspace*{0.5em} & \hspace*{-0.5em} \im \frac{\ph^+(\xi_1, -\Psi(\lambda)) \ph^+(\xi_2, -\Psi(\lambda))}{\ph^+(-\Psi(\lambda))} \\
 & = - \frac{L_\lambda(\xi_1) L_\lambda(\xi_2)}{\Psi'(\lambda)} + \frac{K_\lambda L_\lambda(\xi_1) - K_\lambda(\xi_1)}{1 + K_\lambda^2} \, \frac{K_\lambda L_\lambda(\xi_2) - K_\lambda(\xi_2)}{1 + K_\lambda^2} \, \frac{1 + K_\lambda^2}{\Psi'(\lambda)} \\
 & = \frac{1}{\Psi'(\lambda)} \expr{\frac{K_\lambda L_\lambda(\xi_1) - K_\lambda(\xi_1)}{\sqrt{1 + K_\lambda^2}} \, \frac{K_\lambda L_\lambda(\xi_2) - K_\lambda(\xi_2)}{\sqrt{1 + K_\lambda^2}} - L_\lambda(\xi_1) L_\lambda(\xi_2)} .
}
The proof is complete.
\end{proof}

\begin{lemma}
\label{lem:key}
For all $\xi_1, \xi_2 > 0$ and $t \ge 0$, let
\formula[eq:keydef]{
 \Phi(\xi_1, \xi_2, t) & = \int_0^\infty \frac{K_\lambda L_\lambda(\xi_1) - K_\lambda(\xi_1)}{\sqrt{1 + K_\lambda^2}} \, \frac{K_\lambda L_\lambda(\xi_2) - K_\lambda(\xi_2)}{\sqrt{1 + K_\lambda^2}} \, e^{-t \Psi(\lambda)} d\lambda .
}
Then
\formula[eq:key0]{
 \Phi(\xi_1, \xi_2, 0) & = \frac{\pi}{2} \, \frac{\xi_1 \xi_2}{\xi_1 + \xi_2} \, ,
}
and for all $z > 0$,
\formula[eq:key]{
 \frac{1}{\pi} \int_0^\infty \Phi(\xi_1, \xi_2, t) e^{-z t} dt & = \tilde{\ph}(\xi_1, \xi_2, z) .
}
\end{lemma}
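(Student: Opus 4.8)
The plan is to recognise $z \mapsto \tilde{\ph}(\xi_1, \xi_2, z)$ as a Stieltjes function whose representing measure can be read off from the boundary values computed in Lemma~\ref{lem:jump}, and then to obtain both assertions by manipulating Laplace transforms. By Lemma~\ref{lem:2} the function $z \mapsto \tilde{\ph}(\xi_1, \xi_2, z)$ is Stieltjes, so I would first check that in its representation~\eqref{eq:stieltjes} both constants $\tilde{c}_1$ and $\tilde{c}_2$ vanish, so that
\[
 \tilde{\ph}(\xi_1, \xi_2, z) = \frac{1}{\pi} \int_{[0, \infty)} \frac{\tilde{m}(ds)}{z + s}
\]
for a finite, non-negative Radon measure $\tilde{m}$. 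The bound $0 \le \tilde{\ph}(\xi_1, \xi_2, z) \le \ph(\xi_1, \xi_2, z)$ (immediate from the definition of $\tilde{\ph}$, as $\ph(\xi_1, z)$, $\ph(\xi_2, z)$, $\ph(z)$ are all positive for $z > 0$), together with dominated convergence using the $z$-independent integrable majorant $\zeta \mapsto \xi_1^2 \xi_2^2 / ((\xi_1^2 + \zeta^2)(\xi_2^2 + \zeta^2))$, gives $z \ph(\xi_1, \xi_2, z) \to 0$ both as $z \to 0^+$ and as $z \to \infty$; hence $\tilde{c}_1 = \tilde{c}_2 = 0$ (the vanishing at infinity was already observed in the proof of Lemma~\ref{lem:2}).

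Next I would identify $\tilde{m}$. Since $\Psi$ is continuous and strictly increasing on $(0, \infty)$ with $\Psi(0^+) = 0$, it maps $(0, \infty)$ onto $(0, s_1)$, where $s_1 = \lim_{\zeta \to \infty} \Psi(\zeta) \in (0, \infty]$; for $s > s_1$ none of $\ph$, $\ph(\xi, \cdot)$, $\ph(\xi_1, \xi_2, \cdot)$ has a singularity at $-s$, so $\tilde{m}$ is supported in $[0, s_1]$. On $(0, s_1)$, Proposition~\ref{prop:cbf:repr}(b) (applicable since the relevant boundary limits exist, by Lemma~\ref{lem:jump}) gives $\tilde{m}(ds) = -\im \tilde{\ph}^+(\xi_1, \xi_2, -s) \, ds$. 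Performing the substitution $s = \Psi(\lambda)$ and invoking the last formula of Lemma~\ref{lem:jump}, according to which $-\Psi'(\lambda) \im \tilde{\ph}^+(\xi_1, \xi_2, -\Psi(\lambda))$ equals precisely the $\lambda$-integrand appearing in~\eqref{eq:keydef} (at $t = 0$), yields
\[
 \int_{[0, \infty)} e^{-t s} \, \tilde{m}(ds) = \int_0^\infty e^{-t \Psi(\lambda)} \, \frac{K_\lambda L_\lambda(\xi_1) - K_\lambda(\xi_1)}{\sqrt{1 + K_\lambda^2}} \, \frac{K_\lambda L_\lambda(\xi_2) - K_\lambda(\xi_2)}{\sqrt{1 + K_\lambda^2}} \, d\lambda = \Phi(\xi_1, \xi_2, t) ,
\]
the last equality being the definition~\eqref{eq:keydef}. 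Writing $1/(z + s) = \int_0^\infty e^{-z t} e^{-t s} \, dt$ and applying Tonelli's theorem (legitimate because $\tilde{m} \ge 0$) to the Stieltjes representation above, we get $\tilde{\ph}(\xi_1, \xi_2, z) = \frac{1}{\pi} \int_0^\infty e^{-z t} \Phi(\xi_1, \xi_2, t) \, dt$, which is~\eqref{eq:key}.

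For~\eqref{eq:key0}, note that $\Phi(\xi_1, \xi_2, 0) = \tilde{m}([0, \infty))$, while by monotone convergence $z \tilde{\ph}(\xi_1, \xi_2, z) = \frac{1}{\pi} \int_{[0, \infty)} \frac{z}{z + s} \, \tilde{m}(ds) \to \frac{1}{\pi} \tilde{m}([0, \infty))$ as $z \to \infty$; so it suffices to evaluate $\lim_{z \to \infty} z \tilde{\ph}(\xi_1, \xi_2, z)$. From $z \tilde{\ph}(\xi_1, \xi_2, z) = z \ph(\xi_1, \xi_2, z) - (z \ph(\xi_1, z))(z \ph(\xi_2, z)) / (z \ph(z))$ and dominated convergence (with the majorants $\xi^2 / (\xi^2 + \zeta^2)$ and $\xi_1^2 \xi_2^2 / ((\xi_1^2 + \zeta^2)(\xi_2^2 + \zeta^2))$), one finds $z \ph(\xi, z) \to \frac{1}{\pi} \int_0^\infty \frac{\xi^2}{\xi^2 + \zeta^2} \, d\zeta = \xi / 2$ and $z \ph(\xi_1, \xi_2, z) \to \frac{1}{\pi} \int_0^\infty \frac{\xi_1^2 \xi_2^2 \, d\zeta}{(\xi_1^2 + \zeta^2)(\xi_2^2 + \zeta^2)} = \frac{\xi_1 \xi_2}{2 (\xi_1 + \xi_2)}$, while $z \ph(z) \to \infty$ by Fatou's lemma. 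Hence the quotient term tends to $0$, $z \tilde{\ph}(\xi_1, \xi_2, z) \to \frac{\xi_1 \xi_2}{2(\xi_1 + \xi_2)}$, and $\Phi(\xi_1, \xi_2, 0) = \frac{\pi}{2} \, \frac{\xi_1 \xi_2}{\xi_1 + \xi_2}$.

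The step I expect to be most delicate is the identification of $\tilde{m}$ from Lemma~\ref{lem:jump}: one must verify that $\Psi$ gives a genuine change of variables carrying $(0, \infty)$ onto $(0, s_1)$, handle the case of bounded exponent $s_1 < \infty$ (relevant for truncated stable processes), and confirm that $\tilde{m}$ charges neither $\{0\}$ nor $(s_1, \infty)$, so that the representing measure is exactly the one transported from the $\lambda$-variable. The two interchanges of integration and the three dominated-convergence limits are routine, thanks to the rational majorants coming from the factors $\xi^2 / (\xi^2 + \zeta^2)$, and the positivity $\tilde{m} \ge 0$ guaranteed by Lemma~\ref{lem:2} makes the Tonelli applications unproblematic.
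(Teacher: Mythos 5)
Your proof is correct and follows essentially the same route as the paper's: invoke Lemma~\ref{lem:jump} for the boundary values, Lemma~\ref{lem:2} for the sign, Proposition~\ref{prop:cbf:repr}(b) to identify the representing measure of the Stieltjes function $\tilde{\ph}(\xi_1,\xi_2,\cdot)$, change variables $s = \Psi(\lambda)$, and apply Fubini/Tonelli. The paper organises the steps slightly differently (writing $\Phi$ as a Laplace transform first and then matching against the Stieltjes representation), but the ingredients are identical. Two small remarks: (i) your worry about a bounded exponent ($s_1 < \infty$) is moot, since the standing hypotheses $\Psi' > 0$ and $1/(1+\Psi) \in L^1$ force $\Psi$ to be unbounded, so $\Psi$ maps $(0,\infty)$ onto $(0,\infty)$; (ii) the sentence claiming $z\ph(\xi_1,\xi_2,z) \to 0$ as $z \to \infty$ is false (the limit is $\xi_1\xi_2/(2(\xi_1+\xi_2))$), but what you actually use for $\tilde{c}_2 = 0$ is that $\ph(\xi_1,\xi_2,z) \to 0$ as $z \to \infty$, which is the correct statement and which you do cite via Lemma~\ref{lem:2}.
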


\begin{proof}
Note that by Lemma~\ref{lem:jump}, we have
\formula{
 \Phi(\xi_1, \xi_2, t) & = -\int_0^\infty \im \tilde{\ph}^+(\xi_1, \xi_2, -\Psi(\lambda)) e^{-t \Psi(\lambda)} \Psi'(\lambda) d\lambda \\
 & = -\int_0^\infty \im \tilde{\ph}^+(\xi_1, \xi_2, -s) e^{-t s} ds .
}
By Lemma~\ref{lem:2}, $\im \tilde{\ph}^+(\xi_1, \xi_2, -s) \le 0$. Hence, by Fubini,
\formula{
 \int_0^\infty \Phi(\xi_1, \xi_2, t) e^{-z t} dt & = -\int_0^\infty \frac{1}{s + z} \, \im \tilde{\ph}^+(\xi_1, \xi_2, -s) ds .
}
By the definition, the coefficients $c_1, c_2$ in the representation~\eqref{eq:stieltjes} of the Stieltjes function $\ph(\xi_1, \xi_2, z)$ vanish. Hence, by Proposition~\ref{prop:cbf:repr}(b) and the inequality $\tilde{\ph}(\xi_1, \xi_2, z) \le \ph(\xi_1, \xi_2, z)$ ($z > 0$), the coefficients $c_1, c_2$ in the representation~\eqref{eq:stieltjes} for $\tilde{\ph}(\xi_1, \xi_2, z)$ vanish too. By~\eqref{eq:stieltjes} and another application of Proposition~\ref{prop:cbf:repr}(b),
\formula{
 -\int_0^\infty \frac{1}{s + z} \, \im \tilde{\ph}^+(\xi_1, \xi_2, -s) ds & = \pi \tilde{\ph}(\xi_1, \xi_2, z) .
}
This proves~\eqref{eq:key}.

By monotone convergence, $\Phi(\xi_1, \xi_2, t)$ is right-continuous at $t = 0$. This and~\eqref{eq:key} imply that
\formula{
 \Phi(\xi_1, \xi_2, 0) = \lim_{z \to \infty} z \int_0^\infty \Phi(\xi_1, \xi_2, t) e^{-z t} dt = \pi \lim_{z \to \infty} (z \tilde{\ph}(\xi_1, \xi_2, z)) .
}
As $z \to \infty$, by monotone convergence, the function $z \ph(\xi, z)$ converges to $\xi$, but $z \ph(z)$ diverges to infinity. Hence, $z \ph(\xi_1, z) \ph(\xi_2, z) / \ph(z)$ converges to $0$. It follows that (see the definition of $\tilde{\ph}(\xi_1, \xi_2, z)$ in Lemma~\ref{lem:2})
\formula{
 \Phi(\xi_1, \xi_2, 0) = \pi \lim_{z \to \infty} (z \ph(\xi_1, \xi_2, z)) = \int_0^\infty \frac{\xi_1^2}{\xi_1^2 + \zeta^2} \, \frac{\xi_2^2}{\xi_2^2 + \zeta^2} \, d\zeta ;
}
monotone convergence was used again for the second equality. The integral can be easily evaluated (we omit the details), and~\eqref{eq:key0} follows.
\end{proof}

With the above technical background, we can prove generalized eigenfunction expansion of the operators $P^{\cop}_t$. Let $\Pi_\even$, $\Pi_\odd$ and $\Pi$ be defined as in Theorem~\ref{th:spectral}. First, we prove that the operator $\Pi_\even$ is isometric, up to a constant factor.

\begin{lemma}
\label{lem:unitary}
For any even functions $f, g \in L^2(\cop)$,
\formula{
 \scalar{\Pi_\even f, \Pi_\even g}_{L^2((0, \infty))} & = \pi \scalar{f, g}_{L^2(\cop)} .
}
\end{lemma}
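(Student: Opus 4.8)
The plan is to verify the identity first on a dense family of exponentials, and then to extend it by bilinearity and continuity. For $\xi > 0$ write $e_\xi(x) = e^{-\xi |x|}$. These functions are even, belong to $L^1(\cop) \cap L^2(\cop)$, and their finite linear combinations form a dense subspace $V$ of the even part of $L^2(\cop)$ (a function in $L^2((0, \infty))$ orthogonal to every $e^{-\xi x}$ has identically vanishing Laplace transform, hence is zero). Since $F_\lambda$ is bounded, $\Pi_\even e_\xi(\lambda) = \int_{-\infty}^\infty e_\xi(x) F_\lambda(x)\,dx$ is given by an absolutely convergent integral, namely by the left-hand side of~\eqref{eq:f-laplace-alt}. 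Matching that formula against the quantities $K_\lambda$, $K_\lambda(\xi)$ and $L_\lambda(\xi)$ introduced just before Lemma~\ref{lem:jump} --- using $\xi / (\xi^2 + \lambda^2) = L_\lambda(\xi) / \xi$, the identity $\frac{1}{\pi} \pvint_0^\infty \frac{\xi^2}{\xi^2 + \zeta^2} \, \frac{\Psi'(\lambda)}{\Psi(\lambda) - \Psi(\zeta)} \, d\zeta = -K_\lambda(\xi)$, and $\sin \thet_\lambda = K_\lambda / \sqrt{1 + K_\lambda^2}$, $\cos \thet_\lambda = 1 / \sqrt{1 + K_\lambda^2}$ from~\eqref{eq:abtheta} --- one obtains
\formula{
 \Pi_\even e_\xi(\lambda) & = \frac{2}{\xi} \, \frac{K_\lambda L_\lambda(\xi) - K_\lambda(\xi)}{\sqrt{1 + K_\lambda^2}} \, .
}

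With this formula in hand the lemma is essentially Lemma~\ref{lem:key}. For $\xi_1, \xi_2 > 0$,
\formula{
 \scalar{\Pi_\even e_{\xi_1}, \Pi_\even e_{\xi_2}}_{L^2((0, \infty))} & = \frac{4}{\xi_1 \xi_2} \int_0^\infty \frac{K_\lambda L_\lambda(\xi_1) - K_\lambda(\xi_1)}{\sqrt{1 + K_\lambda^2}} \, \frac{K_\lambda L_\lambda(\xi_2) - K_\lambda(\xi_2)}{\sqrt{1 + K_\lambda^2}} \, d\lambda \\
 & = \frac{4}{\xi_1 \xi_2} \, \Phi(\xi_1, \xi_2, 0) = \frac{2 \pi}{\xi_1 + \xi_2}
}
by~\eqref{eq:key0}; taking $\xi_1 = \xi_2$ in particular shows $\Pi_\even e_\xi \in L^2((0, \infty))$, so the manipulation above is legitimate. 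On the other hand $\scalar{e_{\xi_1}, e_{\xi_2}}_{L^2(\cop)} = \int_{-\infty}^\infty e^{-(\xi_1 + \xi_2) |x|} \, dx = 2 / (\xi_1 + \xi_2)$, and hence the asserted identity holds when $f = e_{\xi_1}$ and $g = e_{\xi_2}$. By bilinearity it holds for all $f, g \in V$; in particular $\Pi_\even$ restricted to $V$ is an isometry up to the factor $\sqrt{\pi}$, so it extends uniquely to a bounded operator on the even subspace of $L^2(\cop)$ for which the identity persists. For $f \in L^1(\cop) \cap L^2(\cop)$ this extension is still represented by the integral $\int_{-\infty}^\infty f(x) F_\lambda(x)\,dx$, by dominated convergence along a sequence in $V$ approximating $f$ in both $L^1$ and $L^2$.

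The displayed computation --- the heart of the argument --- is immediate once Lemma~\ref{lem:key} is available. The one computational step that needs care is the identification of $\Pi_\even e_\xi(\lambda)$: the principal-value integral in~\eqref{eq:f-laplace-alt} must be matched sign-for-sign with the definition of $K_\lambda(\xi)$, and the powers of $\xi$ tracked correctly. The only other mildly delicate issue is the density and extension bookkeeping of the last paragraph, needed to reconcile the continuous extension of $\Pi_\even$ with its original integral definition and to reach arbitrary even $L^2(\cop)$ functions; this is routine but should be written out explicitly.
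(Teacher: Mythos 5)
Your proof is correct and follows essentially the same route as the paper's: compute $\Pi_\even e_\xi(\lambda)$ from the Laplace-transform formula in Theorem~\ref{th:flambda}, identify it as $\frac{2}{\xi}\frac{K_\lambda L_\lambda(\xi) - K_\lambda(\xi)}{\sqrt{1+K_\lambda^2}}$, invoke Lemma~\ref{lem:key} (specifically~\eqref{eq:key0}) to evaluate $\scalar{\Pi_\even e_{\xi_1}, \Pi_\even e_{\xi_2}}$, and extend by density of the span of $\{e_\xi\}$ in the even subspace. The only difference is that you spell out the density and extension bookkeeping a bit more than the paper does; that is fine and changes nothing essential.
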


\begin{proof}
Let $e_\xi(x) = e^{-\xi |x|}$, $x \in \cop$, $\xi > 0$. By Theorem~\ref{th:flambda}, for $\xi, \lambda > 0$ we have
\formula[eq:piexp]{
\begin{aligned}
 \Pi_\even e_\xi(\lambda) & = \int_{-\infty}^\infty F_\lambda(x) e^{-\xi |x|} dx \\
 & = \frac{-2 K_\lambda(\xi) \cos \thet_\lambda + 2 L_\lambda(\xi) \sin \thet_\lambda}{\xi} \\
 & = \frac{2}{\xi} \, \frac{K_\lambda L_\lambda(\xi) - K_\lambda(\xi)}{\sqrt{1 + K_\lambda^2}} \, .
\end{aligned}
}
By~\eqref{eq:keydef} and~\eqref{eq:key0}, it follows that for $\xi_1, \xi_2 > 0$,
\formula{
 \int_0^\infty \Pi_\even e_{\xi_1}(\lambda) \Pi_\even e_{\xi_2}(\lambda) d\lambda & = \frac{4 \Phi(\xi_1, \xi_2, 0)}{\xi_1 \xi_2} \\
 & = \frac{2 \pi}{\xi_1 + \xi_2} = \pi \scalar{e_{\xi_1}, e_{\xi_2}}_{L^2(\cop)} .
}
The family of functions $e_\xi$ is linearly dense in the space of even $L^2(\cop)$ functions. The result follows by approximation.
\end{proof}

The following side-result is interesting. It is obvious when $\Psi(\xi) = \psi(\xi^2)$ for a complete Bernstein function $\psi$. In the general case, however, direct approach seems problematic.

\begin{proposition}
\label{prop:positivelaplace}
We have
\formula{
 \int_{-\infty}^\infty F_\lambda(x) e^{-\xi |x|} dx & \ge 0 , && \lambda, \xi > 0 .
}
\end{proposition}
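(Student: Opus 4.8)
The plan is to read off the value of the integral from~\eqref{eq:piexp}, which was already established in the proof of Lemma~\ref{lem:unitary}, and then to deduce positivity from Lemmas~\ref{lem:1} and~\ref{lem:jump}. By~\eqref{eq:piexp} we know that
\formula{
 \int_{-\infty}^\infty F_\lambda(x) e^{-\xi|x|}\,dx & = \Pi_\even e_\xi(\lambda) = \frac{2}{\xi}\,\frac{K_\lambda L_\lambda(\xi) - K_\lambda(\xi)}{\sqrt{1 + K_\lambda^2}} \, ,
}
so, since $\xi > 0$ and $\sqrt{1 + K_\lambda^2} > 0$, it suffices to prove that $K_\lambda L_\lambda(\xi) - K_\lambda(\xi) \ge 0$ for all $\lambda, \xi > 0$.

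First I would invoke Lemma~\ref{lem:1}: the function $z \mapsto \ph(\xi, z)/\ph(z)$ is a Stieltjes function of $z$, hence, by Proposition~\ref{prop:cbf:defs}(b), it maps the upper complex half-plane into the lower one, so $\im(\ph(\xi, z)/\ph(z)) \le 0$ whenever $\im z > 0$. Then I would pass to the boundary value at $z = -\Psi(\lambda) \in (-\infty, 0)$ along the vertical approach $z = -\Psi(\lambda) + i\eps$, $\eps \downarrow 0$: by Lemma~\ref{lem:jump} the boundary limits $\ph^+(\xi, -\Psi(\lambda))$ and $\ph^+(-\Psi(\lambda))$ exist, and $\ph^+(-\Psi(\lambda)) = (K_\lambda - i)/\Psi'(\lambda) \ne 0$, so the quotient passes to the boundary and
\formula{
 \im \frac{\ph^+(\xi, -\Psi(\lambda))}{\ph^+(-\Psi(\lambda))} & = \lim_{\eps \to 0^+} \im \frac{\ph(\xi, -\Psi(\lambda) + i\eps)}{\ph(-\Psi(\lambda) + i\eps)} \le 0 .
}
But Lemma~\ref{lem:jump} identifies the left-hand side as $-(K_\lambda L_\lambda(\xi) - K_\lambda(\xi))/(1 + K_\lambda^2)$; since $1 + K_\lambda^2 > 0$, this forces $K_\lambda L_\lambda(\xi) - K_\lambda(\xi) \ge 0$, which is exactly what was needed.

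The statement therefore falls out of machinery already in place, and there is no serious obstacle; the one point that deserves a word of justification is the passage to the boundary — one must note that the inequality $\im \le 0$, valid on the open half-plane, is inherited by the continuous boundary value along the vertical approach, and that division by $\ph^+(-\Psi(\lambda))$, which stays bounded away from $0$ near that point, respects the limit, so no new estimate is required. I would also remark that in the special case $\Psi(\xi) = \psi(\xi^2)$ with $\psi$ a complete Bernstein function the inequality is transparent, since there the principal-value integral defining $K_\lambda L_\lambda(\xi) - K_\lambda(\xi)$ collapses to a genuine, manifestly positive integral (the factor $\lambda^2 - \zeta^2$ cancels and one is left with $\psi_\lambda(\zeta^2) \ge 0$ under the integral sign); the argument above is needed only to cover the general case, where no such direct manipulation is available.
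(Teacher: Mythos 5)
Your proof is correct and takes the same route as the paper: from \eqref{eq:piexp} the Laplace transform equals $(2/\xi)\,(K_\lambda L_\lambda(\xi)-K_\lambda(\xi))/\sqrt{1+K_\lambda^2}$, which Lemma~\ref{lem:jump} rewrites as a positive multiple of $-\im\bigl(\ph^+(\xi,-\Psi(\lambda))/\ph^+(-\Psi(\lambda))\bigr)$, and the Stieltjes property in Lemma~\ref{lem:1} gives the sign. Your careful justification of the passage to the boundary, and the closing aside about the complete Bernstein case where $K_\lambda L_\lambda(\xi)-K_\lambda(\xi)$ collapses to a manifestly nonnegative integral in $\psi_\lambda$, are welcome elaborations but do not change the underlying argument.
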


\begin{proof}
By Lemma~\ref{lem:jump} and~\eqref{eq:piexp}, we see that for $e_\xi(x) = e^{-\xi |x|}$,
\formula{
 \int_{-\infty}^\infty F_\lambda(x) e^{-\xi |x|} dx & = \Pi_\even e_\xi(\lambda) = \frac{2}{\xi} \, \frac{K_\lambda L_\lambda(\xi) - K_\lambda(\xi)}{\sqrt{1 + K_\lambda^2}} \\
 & = -\frac{2 \sqrt{1 + K_\lambda^2}}{\xi} \, \im \frac{\ph^+(\xi, -\Psi(\lambda))}{\ph^+(-\Psi(\lambda))} \, ,
}
which is nonnegative by Lemma~\ref{lem:1}.
\end{proof}

The space $L^2(\cop)$ is the direct sum of the spaces $L^2_\even(\cop)$ and $L^2_\odd(\cop)$, consisting of even and odd $L^2(\cop)$ functions, respectively. Recall that $\Pi_\odd$ is the `sine part' of the Fourier transform. Hence, $\pi^{-1/2} \Pi_\odd$ is a unitary mapping of $L^2_\odd(\cop)$ onto $L^2((0, \infty))$, and $\Pi_\odd$ is a zero operator on $L^2_\even(\cop)$. In a similar manner, $\pi^{-1/2} \Pi_\even$ is an isometric mapping of $L^2_\even(\cop)$ \emph{into} $L^2((0, \infty))$, and $\Pi_\even$ vanishes on $L^2_\odd(\cop)$. It remains to prove that $\pi^{-1/2} \Pi_\even$ is a unitary operator on $L^2_\even(\cop)$ \emph{onto} $L^2((0, \infty))$. An equivalent condition is that the kernel of the adjoint operator $\Pi_\even^* : L^2((0, \infty)) \to L^2(\cop)$ is trivial. This is proved in the following result.

\begin{lemma}
\label{lem:representation}
For all $f, g \in L^2((0, \infty))$,
\formula[eq:starisometry]{
 \scalar{\Pi_\even^* f, \Pi_\even^* g}_{L^2(\cop)} & = \pi \scalar{f, g}_{L^2((0, \infty))} ,
}
and
\formula[eq:representation]{
 P^{\cop}_t \Pi_\even^* f & = \Pi_\even^* (e^{-t \Psi} f) , && t > 0 .
}
\end{lemma}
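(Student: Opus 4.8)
The plan is to prove the intertwining relation~\eqref{eq:representation} first and then to deduce the isometry~\eqref{eq:starisometry} from it together with Lemma~\ref{lem:unitary}; the point is that~\eqref{eq:starisometry} amounts to $\ker \Pi_\even^* = \{0\}$, which is precisely what upgrades the already isometric (by Lemma~\ref{lem:unitary}) operator $\pi^{-1/2} \Pi_\even|_{L^2_\even(\cop)}$ to a \emph{surjective} isometry. For~\eqref{eq:representation} I would first observe that both sides depend boundedly and linearly on $f \in L^2((0,\infty))$: on the left because $\Pi_\even^*$ is bounded (by Lemma~\ref{lem:unitary}, and since $\Pi_\even$ annihilates odd functions, $\|\Pi_\even^*\| = \|\Pi_\even\| \le \sqrt\pi$) and $P^\cop_t$ is a contraction; on the right because multiplication by $e^{-t\Psi}$ is a contraction on $L^2((0,\infty))$ as $0 \le e^{-t\Psi} \le 1$. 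Hence it is enough to verify~\eqref{eq:representation} for $f \in C_c((0,\infty))$. For such $f$ the function $\Pi_\even^* f(x) = \int_0^\infty f(\lambda) F_\lambda(x)\, d\lambda$ is a genuine bounded continuous function, because $F_\lambda$ is bounded (Theorem~\ref{th:taux}) with a bound which is easily seen to be uniform for $\lambda$ in compact subsets of $(0,\infty)$ and $(\lambda,x)\mapsto F_\lambda(x)$ is jointly continuous. Writing $P^\cop_t$ through its kernel $p^\cop_t(x,y) \le p_t(x-y)$ and interchanging the integrations by Fubini --- justified by that uniform bound and $p_t \in L^1$ --- gives
\[
 P^\cop_t \Pi_\even^* f(x) = \int_0^\infty f(\lambda)\, P^\cop_t F_\lambda(x)\, d\lambda = \int_0^\infty f(\lambda)\, e^{-t\Psi(\lambda)} F_\lambda(x)\, d\lambda = \Pi_\even^*(e^{-t\Psi} f)(x),
\]
the middle equality being the pointwise eigenrelation $P^\cop_t F_\lambda(x) = e^{-t\Psi(\lambda)} F_\lambda(x)$ from Theorem~\ref{th:flambda}; density of $C_c((0,\infty))$ in $L^2((0,\infty))$ then yields~\eqref{eq:representation} in full.

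For the kernel of $\Pi_\even^*$, suppose $\Pi_\even^* g = 0$. By~\eqref{eq:representation}, $\Pi_\even^*(e^{-t\Psi} g) = P^\cop_t \Pi_\even^* g = 0$ for every $t > 0$, hence $\Pi_\even^*\bigl(p(e^{-\Psi}) g\bigr) = 0$ for every polynomial $p$ with $p(0) = 0$. Under the standing hypotheses $\Psi$ is a strictly increasing homeomorphism of $(0,\infty)$ onto $(0,\infty)$ (integrability of $1/(1+\Psi)$ forces $\Psi(\lambda) \to \infty$), so $\lambda \mapsto e^{-\Psi(\lambda)}$ is a homeomorphism of $(0,\infty)$ onto $(0,1)$; given $m \in C_c((0,\infty))$, the function $u \mapsto m(-\log u)$ extends continuously to $[0,1]$ and vanishes at the endpoints, so it is a uniform limit of polynomials $p_n$ with $p_n(0) = 0$ (e.g.\ Bernstein polynomials), whence $p_n(e^{-\Psi}) g \to m(\Psi) g$ in $L^2((0,\infty))$ by dominated convergence and, by continuity of $\Pi_\even^*$, $\Pi_\even^*(m(\Psi) g) = 0$. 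Thus
\[
 \int_0^\infty m(\Psi(\lambda))\, g(\lambda)\, F_\lambda(x)\, d\lambda = 0 \qquad \text{for all } m \in C_c((0,\infty)) \text{ and a.e.\ } x .
\]
Letting $m$ range over a countable uniformly dense subfamily and fixing $x$ in the resulting full-measure set, the push-forward under $\Psi$ of the locally finite signed measure $g(\lambda) F_\lambda(x)\, d\lambda$ annihilates $C_c((0,\infty))$, hence is the zero measure; as $\Psi$ is a homeomorphism this forces $g(\lambda) F_\lambda(x) = 0$ for a.e.\ $\lambda$. By joint measurability and Fubini, for a.e.\ $\lambda$ either $g(\lambda) = 0$ or $F_\lambda = 0$ a.e.; but $F_\lambda$ is never the zero function (since $F_\lambda(x) - \sin(|\lambda x| + \thet_\lambda) = -G_\lambda(x) \to 0$ as $x \to \pm\infty$, so $F_\lambda$ cannot vanish identically --- alternatively this is visible from~\eqref{eq:f-fourier}). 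Therefore $g = 0$.

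To conclude, by Lemma~\ref{lem:unitary} the operator $V := \Pi_\even|_{L^2_\even(\cop)} \colon L^2_\even(\cop) \to L^2((0,\infty))$ satisfies $\|Vf\| = \sqrt\pi\,\|f\|$, so $V$ is bounded below and its range is closed; moreover its Hilbert-space adjoint is $\Pi_\even^*$ (the range of $\Pi_\even^*$ lies in $L^2_\even(\cop)$ because every $F_\lambda$ is even). By the previous step the orthogonal complement of the range of $V$, which equals $\ker \Pi_\even^*$, is trivial, so $V$ is onto; hence $\pi^{-1/2} V$ is unitary, and so is its inverse $\pi^{-1/2} \Pi_\even^*$. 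This is precisely~\eqref{eq:starisometry}.

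The main obstacle is the middle step, the triviality of $\ker \Pi_\even^*$, which is equivalent to completeness of the eigenfunctions $F_\lambda$. The two delicate ingredients there are: the passage from the exponential spectral multipliers $e^{-t\Psi}$ provided by~\eqref{eq:representation} to arbitrary $C_c$ multipliers of $\Psi$ (requiring $\Psi$ to be a homeomorphism of $(0,\infty)$, a Stone--Weierstrass/Bernstein approximation, and continuity of $\Pi_\even^*$), and the measure-theoretic separation argument, which hinges on the local boundedness of $\lambda \mapsto F_\lambda(x)$ and on the fact that no $F_\lambda$ vanishes identically. Everything else --- the Fubini interchanges in the first step and the operator-theoretic bookkeeping in the last --- is routine.
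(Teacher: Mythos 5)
Your proof is correct, and for the intertwining relation~\eqref{eq:representation} it follows essentially the paper's route (Fubini, the pointwise eigenrelation $P^\cop_t F_\lambda = e^{-t\Psi(\lambda)} F_\lambda$ from Theorem~\ref{th:flambda}, then density). Where you diverge is in proving $\ker \Pi_\even^* = \{0\}$. The paper pairs $\Pi_\even^*(e^{-t\Psi}f)$ with the exponentials $e_\xi(x) = e^{-\xi|x|}$, substitutes $s = \Psi(\lambda)$, and invokes uniqueness of the Laplace transform in the variable $t$ to deduce $f(\lambda)\,\Pi_\even e_\xi(\lambda) = 0$ for a.e.\ $\lambda$ and each $\xi > 0$; it then applies Fubini and Laplace-transform uniqueness a second time, now in $\xi$, observing that $\xi \mapsto \Pi_\even e_\xi(\lambda)$ is the Laplace transform of the nonzero function $2 F_\lambda \ind_{(0,\infty)}$. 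You instead upgrade the exponential spectral multipliers $e^{-t\Psi}$ to arbitrary $C_c$ multipliers $m(\Psi)$ via Bernstein approximation on $[0,1]$, and then read the conclusion off pointwise, using only the coarse fact that $F_\lambda$ is not the zero function. Both routes are valid. The paper's is shorter and avoids the bookkeeping around joint measurability of $(\lambda,x) \mapsto F_\lambda(x)$ and the countable dense subfamily of test functions; yours buys the slight advantage that it never needs the explicit Laplace-transform identification of $\Pi_\even e_\xi(\lambda)$ --- only $F_\lambda \ne 0$ --- at the cost of the Stone--Weierstrass/Bernstein detour and the requirement that $p_n(0) = 0$ (which you correctly arrange via Bernstein polynomials of $u \mapsto m(-\log u)$).
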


\begin{proof}
As in the proof of Lemma~5.2 in~\cite{bib:k10}, formula~\eqref{eq:representation} for $f \in C_c((0, \infty))$ follows directly from Theorem~\ref{th:flambda} and Fubini. Indeed, let $\supp f \sub [\lambda_1, \lambda_2]$, $0 < \lambda_1 < \lambda_2$. Using~\eqref{eq:f-def}, \eqref{eq:g-def} and Fourier inversion formula, it is easy to prove that $F_\lambda(y)$ is bounded uniformly in $\lambda \in [\lambda_1, \lambda_2]$ and $y \in \cop$ (we omit the details). Furthermore, $p^\cop_t(x, y)$ is integrable in $y \in \cop$. Hence,
\formula{
 P^{\cop}_t \Pi_\even^* f & = \int_{\cop} \int_{\lambda_1}^{\lambda_2} p^\cop_t(x, y) F_\lambda(y) f(\lambda) d\lambda dx \\
 & = \int_{\lambda_1}^{\lambda_2} e^{-t \Psi(\lambda)} F_\lambda(x) f(\lambda) d\lambda = \Pi_\even^* (e^{-t \Psi} f) .
}
Since $\Pi_\even^*$ is a bounded operator, \eqref{eq:representation} extends to general $f \in L^2((0, \infty))$ by an approximation argument.

The adjoint of an isometric operator is isometric if and only if the kernel of the adjoint is trivial. Hence, by Lemma~\ref{lem:unitary}, it suffices to show that the kernel of $\Pi_\even^*$ is trivial.

Suppose that $f \in L^2((0, \infty))$ and $\Pi_\even^* f = 0$. We claim that this implies that $f = 0$. By~\eqref{eq:representation}, $\Pi_\even^* (e^{-t \Psi} f) = 0$ for any $t > 0$. Let $e_\xi(x) = e^{-\xi |x|}$, $x \in \cop$. We obtain that for all $t > 0$ and $\xi > 0$,
\formula{
 0 = \scalar{\Pi_\even^* (e^{-t \Psi} f), e_\xi}_{L^2(\cop)} & = \scalar{e^{-t \Psi} f, \Pi_\even e_\xi}_{L^2((0, \infty))} \\
 & = \int_0^\infty e^{-t \Psi(\lambda)} f(\lambda) \Pi_\even e_\xi(\lambda) d\lambda .
}
By substituting $s = \Psi(\lambda)$, we obtain that
\formula{
 \int_0^\infty e^{-t s} \, \frac{f(\Psi^{-1}(s)) \Pi_\even e_\xi(\Psi^{-1}(s))}{\Psi'(\Psi^{-1}(s))} \, ds & = 0 , && t, \xi > 0 .
}
It follows that for each $\xi > 0$, $f(\Psi^{-1}(s)) \Pi_\even e_\xi(\Psi^{-1}(s)) = 0$ for almost all $s > 0$.

In other words, for each $\xi > 0$, $f(\lambda) \Pi_\even e_\xi(\lambda) = 0$ for almost all $\lambda > 0$. By Fubini, for almost every $\lambda > 0$, $f(\lambda) \Pi_\even e_\xi(\lambda) = 0$ for almost all $\xi > 0$. But for each fixed $\lambda > 0$, $\Pi_\even e_\xi(\lambda)$ (as a function of $\xi > 0$) is the Laplace transform of a nonzero function (namely, $2 F_\lambda(x) \ind_{(0, \infty)}(x)$), and so it cannot vanish for almost all $\xi > 0$. This implies that $f(\lambda) = 0$ for almost every $\lambda > 0$, proving our claim.
\end{proof}

\begin{proof}[Proof of Theorem~\ref{th:spectral}]
The operator $\Pi f = (\Pi_\even f, \Pi_\odd f)$ is unitary by Lemmas~\ref{lem:unitary} and~\ref{lem:representation}, and the discussion preceding the statement of Lemma~\ref{lem:representation}.

Let $f \in L^2(\cop)$, and let $f = f_\even + f_\odd$ be the decomposition into the even and odd part. Clearly, $P^\cop_t f_\even$ is an even function, and $P^\cop_t f_\odd$ is an odd function. Hence, $\Pi_\even f_\odd = \Pi_\even P^\cop_t f_\odd = 0$ and $\Pi_\odd f_\even = \Pi_\odd P^\cop_t f_\even = 0$. By Lemma~\ref{lem:representation},
\formula[eq:piplus]{
\begin{aligned}
 \Pi_\even P^{\cop}_t f_\even & = \pi^{-1} \Pi_\even (P^{\cop}_t \Pi_\even^* \Pi_\even f_\even) \\
 & = \pi^{-1} \Pi_\even (\Pi_\even^* (e^{-t \Psi} \Pi_\even f_\even)) = e^{-t \Psi} \Pi_\even f .
\end{aligned}
}
Furthermore, by the strong Markov property, $p_t(x - y) - p^\cop_t(x, y) = \ex_x(p_t(y); \tau_0 \le t)$ (the \emph{Hunt's formula}), and the right-hand side is an even function of $y$ (for any $x \in \cop$). Therefore $P^{\cop}_t f_\odd = P_t f_\odd$. By the remark preceding the statement of Theorem~\ref{th:spectral} (see also Remark~\ref{rem:free}), it follows that
\formula[eq:piminus]{
 \Pi_\odd P^{\cop}_t f_\odd & = \Pi_\odd P_t f_\odd = e^{-t \Psi} \Pi_\odd f_\odd .
}
The above properties combined together prove~\eqref{eq:fullrepresentation}.
\end{proof}

\begin{proof}[Proof of Corollary~\ref{cor:pdt}]
Let $f \in C_c(\cop)$. Then $\Pi f \in L^2((0, \infty)) \oplus L^2((0, \infty))$, and hence $e^{-t \Psi} \Pi f$ is a pair of integrable functions. Hence,
\formula{
 P^{\cop}_t f(x) \hspace*{-3em} & \hspace*{3em} = \pi^{-1} \Pi (e^{-t \Psi} \Pi^* f)(x) \\
 & = \frac{1}{\pi} \biggl( \int_0^\infty e^{-t \Psi(\lambda)} F_\lambda(x) \expr{\int_{-\infty}^\infty f(y) F_\lambda(y) dy} d\lambda \\
 & \hspace*{7em} + \frac{1}{\pi} \int_0^\infty e^{-t \Psi(\lambda)} \sin(\lambda x) \expr{\int_{-\infty}^\infty f(y) \sin(\lambda y) dy} d\lambda \biggr) \\
}
By Theorem~\ref{th:glambda}(a), $F_\lambda(y)$ is bounded uniformly in $\lambda > 0$ and $y \in \cop$. By Fubini, it follows that
\formula{
 P^{\cop}_t f(x) = \frac{1}{\pi} \int_{-\infty}^\infty f(y) \int_0^\infty e^{-t \Psi(\lambda)} (F_\lambda(x) F_\lambda(y) + \sin(\lambda x) \sin(\lambda y)) d\lambda dy .
}
This proves the first equality in~\eqref{eq:pdt}. For the second one, note that
\formula{
 \hspace*{3em} & \hspace*{-3em} \frac{2}{\pi} \int_0^\infty e^{-t \Psi(\lambda)} \sin(\lambda x) \sin(\lambda y) d\lambda \\
 & = \frac{1}{2 \pi} \int_{-\infty}^\infty e^{-t \Psi(\lambda)} (\cos(\lambda (x - y)) - \cos(\lambda (x + y))) d\lambda \\
 & = \frac{1}{2 \pi} \int_{-\infty}^\infty e^{-t \Psi(\lambda)} (e^{-i \lambda (x - y)} - e^{-i \lambda (x + y)}) d\lambda \\
 & = p_t(x - y) - p_t(x + y) . \qedhere
}
\end{proof}

\begin{proof}[Proof of Theorem~\ref{th:tau}]
Recall that we need to prove that
\formula[eq:tau2]{
 \pr_x(t < \tau_0 < \infty) & = \frac{1}{\pi} \int_0^\infty \frac{\cos \thet_\lambda e^{-t \Psi(\lambda)} \Psi'(\lambda) F_\lambda(x)}{\Psi(\lambda)} \, d\lambda .
}
Note that $G_\lambda(0) = \sin \thet_\lambda - F_\lambda(0) = \sin \thet_\lambda$. For $\lambda, t > 0$ and $x \in \cop$ we decompose the integrand in~\eqref{eq:tau2},
\formula[eq:tau-decomposition]{
\begin{aligned}
 \frac{\cos \thet_\lambda e^{-t \Psi(\lambda)} \Psi'(\lambda) F_\lambda(x)}{\Psi(\lambda)} & = \frac{e^{-t \Psi(\lambda)} \Psi'(\lambda)}{\Psi(\lambda)} \, (\sin(|\lambda x| + \thet_\lambda) - \sin \thet_\lambda) \\
 & \hspace*{3.5em} + \frac{e^{-t \Psi(\lambda)} \Psi'(\lambda)}{\Psi(\lambda)} \, (G_\lambda(0) - G_\lambda(x)) .
\end{aligned}
}
By the assumption, $\Psi(\xi) = \psi(\xi^2)$, where $\psi$ is non-negative, increasing and concave. Hence, $\Psi'(\lambda) = 2 \lambda \psi'(\lambda^2) \le 2 \lambda (\psi(\lambda^2) / \lambda^2) = 2 \Psi(\lambda) / \lambda$. It follows that
\formula{
 \abs{\frac{e^{-t \Psi(\lambda)} \Psi'(\lambda)}{\Psi(\lambda)} \, (\sin(|\lambda x| + \thet_\lambda) - \sin \thet_\lambda)} & \le 2 |x| e^{-t \Psi(\lambda)} .
}
Furthermore, the second summand on the right-hand side of~\eqref{eq:tau-decomposition} is non-negative by Theorem~\ref{th:glambda}. It follows that the integral in~\eqref{eq:tau2} is well-defined (possibly infinite). Denote the right-hand side of~\eqref{eq:tau2} by $f(t, x)$ ($t > 0$, $x \in \cop$). If $e_\xi(x) = e^{-\xi |x|}$, then for $\xi, z > 0$ we have, by Fubini,
\formula{
 \int_0^\infty \int_{-\infty}^\infty f(t, x) e^{-\xi |x|} e ^{-z t} dx dt & = \frac{1}{\pi} \int_0^\infty \frac{\cos \thet_\lambda \Psi'(\lambda) \Pi_\even e_\xi(\lambda)}{(z + \Psi(\lambda)) \Psi(\lambda)} \, d\lambda ;
}
the use of Fubini here is justified by the decomposition of the integrand into an absolutely integrable part and a non-negative part, as in~\eqref{eq:tau-decomposition}. Using $\cos \thet_\lambda = (1 + K_\lambda^2)^{-1/2}$ and formula~\eqref{eq:piexp}, we obtain that
\formula{
 \hspace*{5em} & \hspace*{-5em} \int_0^\infty \int_{-\infty}^\infty f(t, x) e^{-\xi |x|} e ^{-z t} dx dt \\
 & = \frac{2}{\pi \xi} \int_0^\infty \frac{\Psi'(\lambda)}{(z + \Psi(\lambda)) \Psi(\lambda)} \, \frac{K_\lambda L_\lambda(\xi) - K_\lambda(\xi)}{1 + K_\lambda^2} \, d\lambda .
}
By Lemma~\ref{lem:jump}, substitution $\Psi(\lambda) = s$ and Proposition~\ref{prop:cbf:repr}(b),
\formula{
 \hspace*{4em} & \hspace*{-4em} \int_0^\infty \int_{-\infty}^\infty f(t, x) e^{-\xi |x|} e ^{-z t} dx dt \\
 & = -\frac{2}{\pi \xi z} \int_0^\infty \expr{\frac{1}{\Psi(\lambda)} - \frac{1}{z + \Psi(\lambda)}} \, \im \frac{\ph^+(\xi, -\Psi(\lambda))}{\ph^+(-\Psi(\lambda))} \, \Psi'(\lambda) d\lambda \\
 & = -\frac{2}{\pi \xi z} \int_0^\infty \expr{\frac{1}{s} - \frac{1}{z + s}} \, \im \frac{\ph^+(\xi, -s)}{\ph^+(-s)} \, ds \\
 & = \frac{2}{\xi z} \expr{\lim_{\eps \to 0^+} \frac{\ph(\xi, \eps)}{\ph(\eps)} - \frac{\ph(\xi, z)}{\ph(z)}} .
}
Note that $\ph(\xi, z) \le \ph(z)$, so that the above expression is finite. In particular, $f(t, x)$ is finite for almost all $t > 0$, $x \in \cop$.

On the other hand, for $z > 0$ and $x \in \R$, denote $g_z(x) = \ex_x \exp(-z \tau_0)$. Then $g_z(x) = u_z(x) / u_z(0)$, where $u_z(x)$ is the resolvent (or $z$-potential) kernel for the operator $\A$ (see Preliminaries). Hence,
\formula{
 \fourier g_z(\xi) & = \frac{1}{u_z(0)} \, \frac{1}{\Psi(\xi) + z} \, , && z > 0 , \, \xi \in \R ,
}
where, by the Fourier inversion formula,
\formula{
 u_z(0) & = \frac{1}{\pi} \int_0^\infty \frac{1}{\Psi(\zeta) + z} \, d\zeta = \ph(z) .
}
Furthermore, for $z > 0$ and $x \in \cop$, 
\formula{
 \int_0^\infty \pr_x(\tau_0 \le t) e^{-z t} dt & = \frac{\ex_x \exp(-z \tau_0)}{z} = \frac{g_z(x)}{z} \, .
}
By Fubini and Plancherel's theorem,
\formula{
 \hspace*{4em} & \hspace*{-4em} \int_0^\infty \int_{-\infty}^\infty \pr_x(\tau_0 \le t) e^{-\xi |x|} e ^{-z t} dx dt \\
 & = \frac{1}{z} \int_{-\infty}^\infty g_z(x) e^{-\xi |x|} dx = \frac{2}{\pi z} \int_0^\infty \frac{\xi}{\xi^2 + \zeta^2} \, \fourier g_z(\zeta) d\zeta \\
 & = \frac{2}{\pi \xi z} \, \frac{1}{\ph(z)} \int_0^\infty \frac{\xi^2}{\xi^2 + \zeta^2} \, \frac{1}{\Psi(\zeta) + z} d\zeta = \frac{2}{\xi z} \, \frac{\ph(\xi, z)}{\ph(z)} \, .
}
In particular, we have
\formula{
 \hspace*{3em} & \hspace*{-3em} \int_0^\infty \int_{-\infty}^\infty \pr_x(\tau_0 < \infty) e^{-\xi |x|} e ^{-z t} dx dt \\
 & = \frac{1}{z} \int_{-\infty}^\infty \pr_x(\tau_0 < \infty) e^{-\xi |x|} dx \\
 & = \lim_{\eps \to 0^+} \expr{\frac{\eps}{z} \int_{-\infty}^\infty \int_0^\infty \pr_x(\tau_0 < t) e^{-\xi |x|} e^{-\eps t} dx dt} \\
 & = \lim_{\eps \to 0^+} \expr{\frac{2}{\xi z} \, \frac{\ph(\xi, \eps)}{\ph(\eps)}} ,
}
and so finally
\formula{
 \hspace*{8em} & \hspace*{-8em} \int_0^\infty \int_{-\infty}^\infty \pr_x(t < \tau_0 < \infty) e^{-\xi |x|} e ^{-z t} dx dt \\
 & = \frac{2}{\xi z} \expr{\lim_{\eps \to 0^+} \frac{\ph(\xi, \eps)}{\ph(\eps)} - \frac{\ph(\xi, z)}{\ph(z)}} \\
 & = \int_0^\infty \int_{-\infty}^\infty f(t, x) e^{-\xi |x|} e ^{-z t} dx dt .
}
By the uniqueness of the Laplace transform, formula~\eqref{eq:tau2} follows for almost every pair $t > 0$, $x \in \cop$. By dominated convergence, both sides of~\eqref{eq:tau2} are continuous in $t > 0$, and the theorem is proved.
\end{proof}

\begin{remark}
\label{rem:misproof}
One could try the following argument for the proof of Theorem~\ref{th:tau}: By monotone convergence,
\formula{
 \pr_x(\tau_0 > t) & = \int_{-\infty}^\infty p^{\cop}_t(x, y) dy \\
 & = \lim_{\xi \to 0^+} \int_{-\infty}^\infty p^{\cop}_t(x, y) e^{-\xi |y|} dy = \lim_{\xi \to 0^+} P^{\cop}_t e_\xi(x) ,
}
where $e_\xi(x) = e^{-\xi |x|}$. For each $\xi > 0$, by Theorem~\ref{th:spectral} we have
\formula[eq:misproof]{
\begin{aligned}
 P^{\cop}_t e_\xi(x) & = \frac{1}{\pi} \, \Pi_\even^* (e^{-t \Psi} \Pi_\even e_\xi)(x) \\
 & = \frac{1}{\pi} \int_0^\infty F_\lambda(x) e^{-t \Psi(\lambda)} \Pi_\even e_\xi(\lambda) d\lambda .
\end{aligned}
}
Recall that as $\xi \to 0^+$, $\Pi_\even e_\xi(\lambda)$ converges to $\cos \thet_\lambda \Psi'(\lambda) / \Psi(\lambda)$. Hence, if the above integrals were uniformly integrable as $\xi \to 0^+$, we would obtain 
\formula{
 \pr_x(\tau_0 > t) & = \frac{1}{\pi} \int_0^\infty F_\lambda(x) e^{-t \Psi(\lambda)} \, \frac{\cos \thet_\lambda \Psi'(\lambda)}{\Psi(\lambda)} \, d\lambda .
}
However, if $\pr_x(\tau_0 < \infty) < 1$ (that is, when $X_t$ is transient), we know by Theorem~\ref{th:tau} that this formula is false! This proves that the integrand in~\eqref{eq:misproof} in some cases fails to be uniformly integrable, and its limit (in the sense of distributions) may contain a point mass at $0$. For this reason, a more direct approach to the proof of Theorem~\ref{th:tau}, similar to the argument sketched above, seems to be problematic and require stronger regularity conditions on $\Psi$.\qed
\end{remark}

%
%                            ---------- o ----------
%

\section{Examples}
\label{sec:examples}

In this section we consider a few examples: processes frequently found in literature and an irregular case. We focus on spectral theory, that is, on Theorems~\ref{th:flambda}, \ref{th:glambda} and~\ref{th:spectral}, and only record formulae given in Corollary~\ref{cor:pdt} and~\ref{th:tau}. As mentioned in the introduction, detailed properties of first hitting times to points will be studied in a separate article.

Plots have been prepared using \emph{Mathematica 8.1} at the Wroc{\l}aw University of Technology. Numerical integration was used for the computation of $\thet_\lambda$ and fast Fourier transform for the calculation of $G_\lambda(x)$. 

\begin{figure}
\centering
\begin{tabular}{cc}
\includegraphics[width=0.49\textwidth]{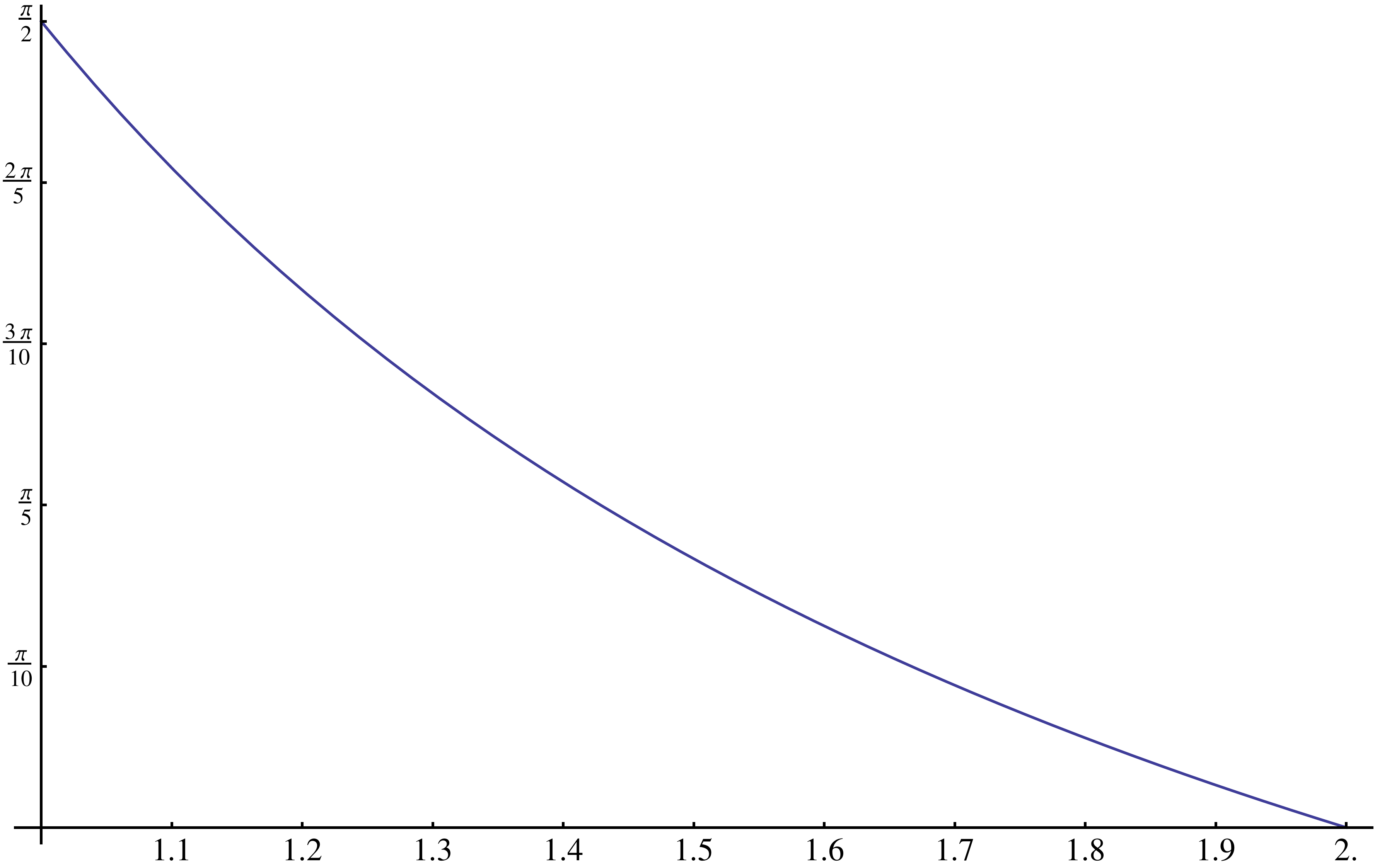}&
\includegraphics[width=0.49\textwidth]{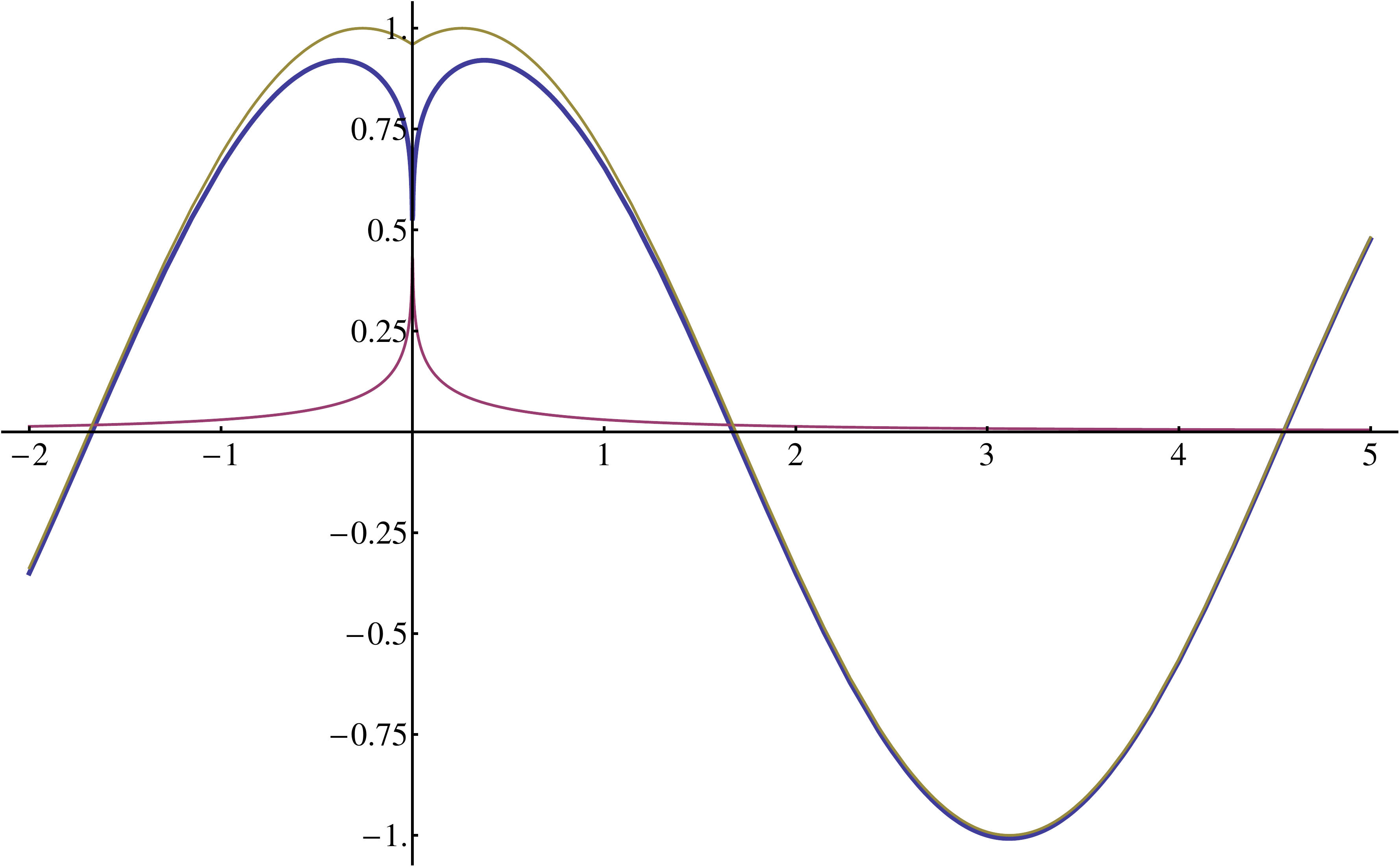}\\
\footnotesize $\thet_\lambda(\alpha) = \pi/\alpha - \pi/2$&
\footnotesize \parbox{0.35\textwidth}{\centering $F_1(x)$ (blue), $G_1(x)$ (purple) and $\sin(|x| + \thet_1)$ (brown) for $\alpha = 1.1$}\\[0.5em]
\includegraphics[width=0.49\textwidth]{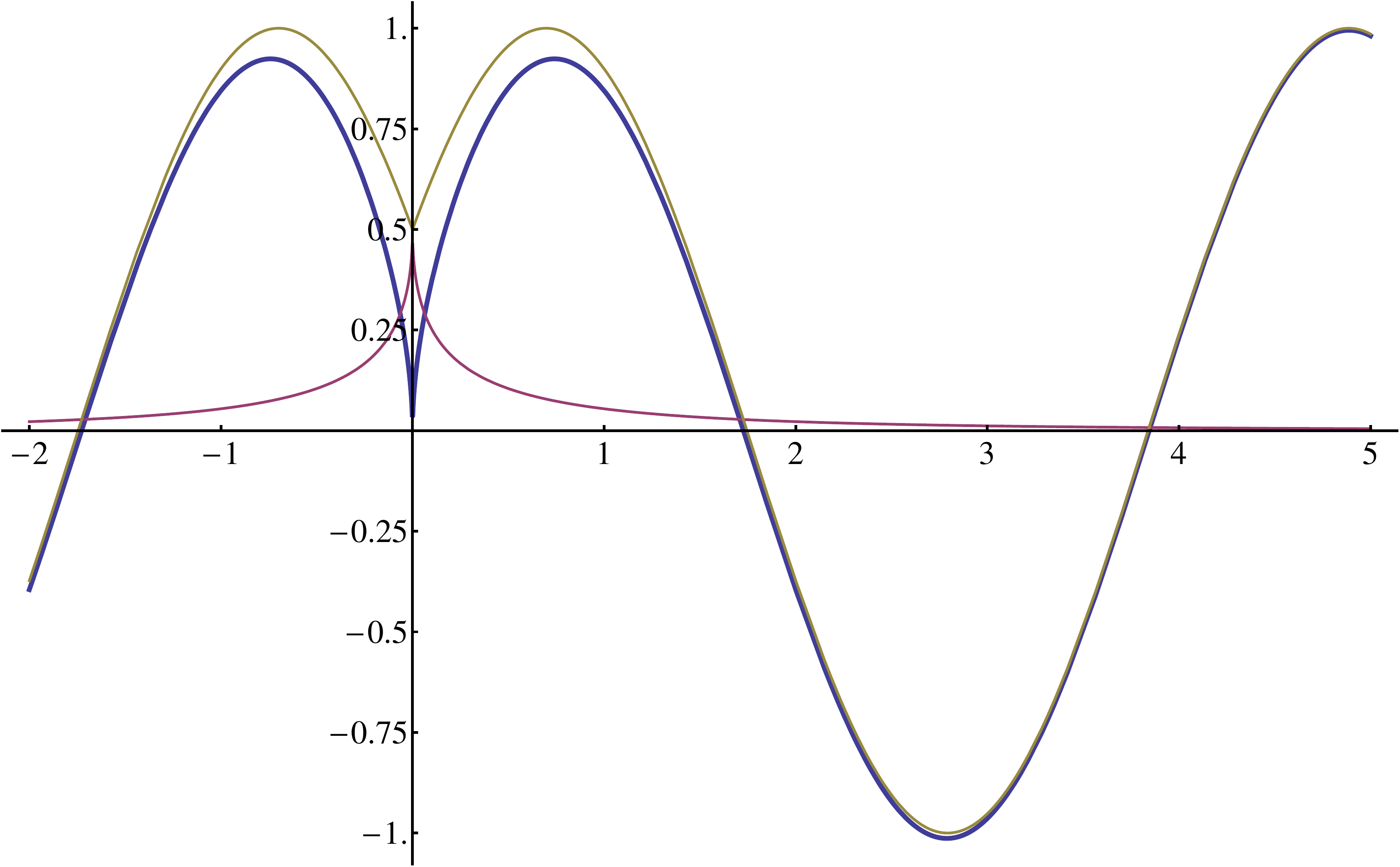}&
\includegraphics[width=0.49\textwidth]{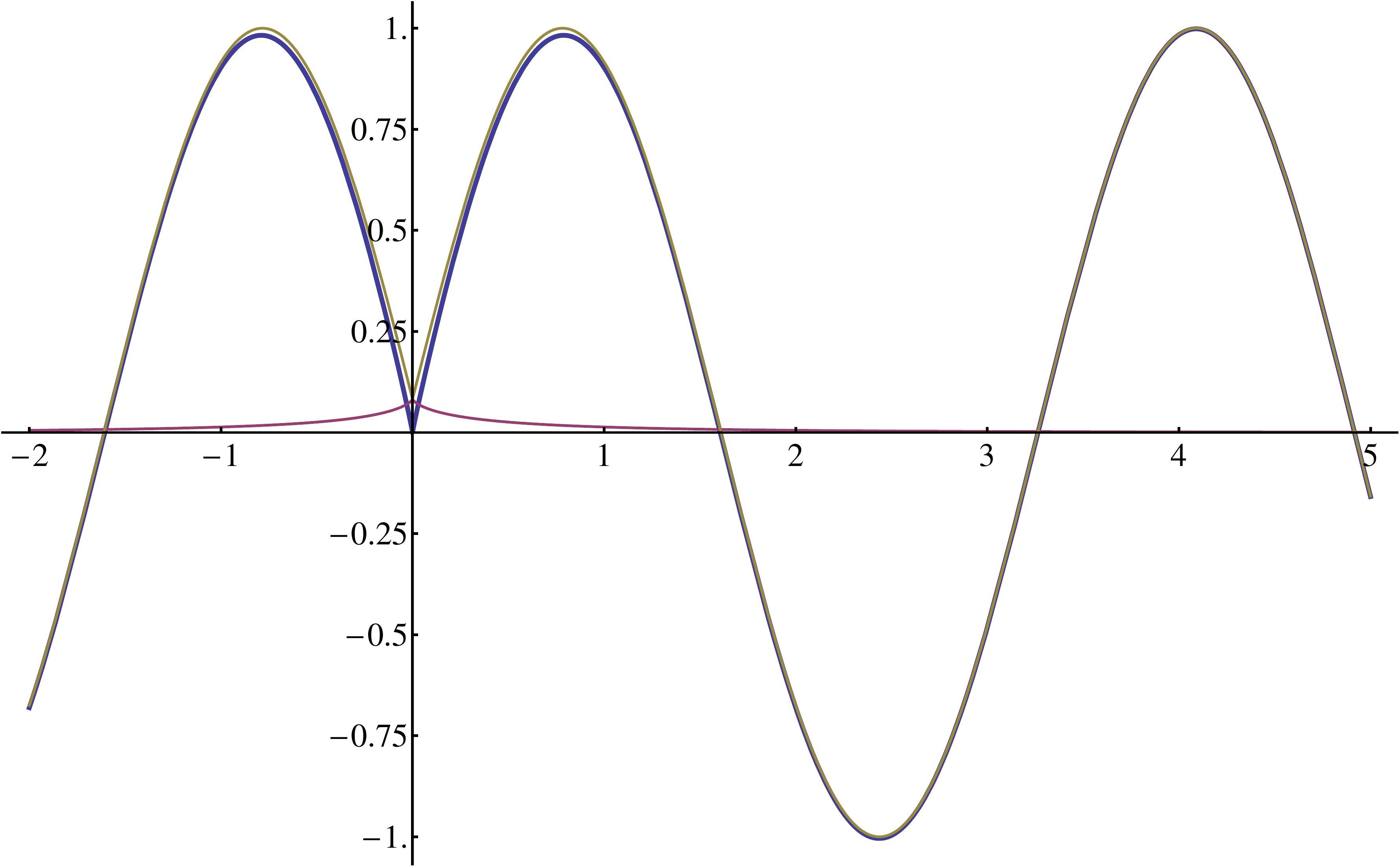}\\
\footnotesize \parbox{0.35\textwidth}{\centering $F_1(x)$ (blue), $G_1(x)$ (purple) and $\sin(|x| + \thet_1)$ (brown) for $\alpha = 1.5$}&
\footnotesize \parbox{0.35\textwidth}{\centering $F_1(x)$ (blue), $G_1(x)$ (purple) and $\sin(|x| + \thet_1)$ (brown) for $\alpha = 1.9$}
\end{tabular}
\caption{Plots for $\Psi(\xi) = |\xi|^\alpha$.}
\label{fig:2}
\end{figure}

\begin{example}
Let $\Psi(\xi) = |\xi|^\alpha$, $\alpha \in (1, 2]$, be the L\'evy-Khintchine exponent of the one-dimensional symmetric $\alpha$-stable L\'evy process. The distribution of the hitting times to points for these processes have been previously studied in~\cite{bib:yyy09}.

By substituting $\xi = \lambda s$ in the integral over $(0, \lambda)$ and $\xi = \lambda / s$ in the integral over $(\lambda, \infty)$, we obtain (see~\eqref{eq:k0})
\formula{
 K_\lambda & = -\frac{1}{\pi} \, \pvint_0^\infty \frac{\alpha \lambda^{\alpha - 1}}{\lambda^\alpha - \xi^\alpha} \, d\xi = -\frac{\alpha}{\pi} \int_0^1 \expr{\frac{1}{1 - s^\alpha} - \frac{1}{s^{2 - \alpha} (1 - s^\alpha)}} \, ds .
}
By series expansion and Fubini,
\formula{
 K_\lambda & = \frac{\alpha}{\pi} \sum_{n = 0}^\infty \expr{\int_0^1 (s^{\alpha - 2} - 1) s^{n \alpha} ds} = \frac{\alpha}{\pi} \sum_{n = 0}^\infty \expr{\frac{1}{(n + 1) \alpha - 1} - \frac{1}{n \alpha + 1}} .
}
Recall that for $z \in (0, \pi)$,
\formula{
 \pi \cot (\pi z) & = \frac{1}{z} + \sum_{n = 1}^\infty \expr{\frac{1}{z - n} + \frac{1}{z + n}} .
}
Hence, after some rearrangement of the sum,
\formula{
 K_\lambda & = -\frac{1}{\pi} \sum_{n = 0}^\infty \expr{\frac{1}{1/\alpha + n} + \frac{1}{1/\alpha - (n + 1)}} \\
 & = - \frac{1}{\pi} \expr{\frac{1}{1/\alpha} + \sum_{k = 1}^\infty \expr{\frac{1}{1/\alpha + k} + \frac{1}{1/\alpha - k}}} = -\cot \frac{\pi}{\alpha} \, .
}
Hence,
\formula[eq:stable:theta]{
 \thet_\lambda & = \arctan(K_\lambda) = \frac{\pi}{\alpha} - \frac{\pi}{2} \, ,
}
and $\cos \thet_\lambda = \sin(\pi / \alpha)$, $\sin(\thet_\lambda) = -\cos(\pi / \alpha)$. Furthermore, by~\eqref{eq:gintegral},
\formula{
 G_\lambda(x) & = \frac{\alpha \lambda^{\alpha - 1} \sin(\pi \alpha / 2) \sin(\pi / \alpha)}{\pi} \times \\
 & \hspace*{7em} \times \int_0^\infty \frac{\xi^\alpha}{\lambda^{2 \alpha} - 2 \lambda^\alpha \xi^\alpha \cos(\pi \alpha / 2) + \xi^{2 \alpha}} \, e^{-\xi |x|} d\xi \\
 & = \frac{\alpha \sin(\pi \alpha) \sin(\pi / \alpha)}{\pi} \, \int_0^\infty \frac{s^\alpha}{1 - 2 s^\alpha \cos(\pi \alpha) + s^{2 \alpha}} \, e^{-\lambda s |x|} ds .
}
Hence,
\formula[eq:stable:f]{
\begin{aligned}
 F_\lambda(x) & = \sin \expr{|\lambda x| + \frac{\pi}{\alpha} - \frac{\pi}{2}} - \frac{\alpha \sin(\pi \alpha / 2) \sin(\pi / \alpha)}{\pi} \times \\
 & \hspace*{10em} \times \int_0^\infty \frac{s^\alpha}{1 - 2 s^\alpha \cos(\pi \alpha / 2) + s^{2 \alpha}} \, e^{-\lambda s |x|} ds .
\end{aligned}
}
Note that $F_\lambda(x) = F_1(\lambda x)$, which could be deduced a priori using scaling properties of $X_t$. As expected, in the limiting case $\alpha \to 1^+$ we have $\thet_\lambda \to \pi/2$ and $G_\lambda(x) \to 0$ (the Cauchy process never hits $0$, hence $F_\lambda(x) = \cos(\lambda x) = \sin(|\lambda x| + \pi/2)$ is the even eigenfunction in $\cop$). Also, for $\alpha = 2$, we obtain $\thet_\lambda = 0$ and $G_\lambda(x) = 0$ ($F_\lambda(x) = \sin |\lambda x|$ is the even eigenfunction for the Brownian motion in $\cop$). See Figure~\ref{fig:2} for plots of $F_\lambda$.

Since $X_t$ is recurrent, we have $\pr_x(\tau_0 = \infty) = 0$. Hence, by Theorem~\ref{th:tau} and scaling,
\formula{
 \pr_x(\tau_0 > t) & = \frac{\alpha \sin(\pi / \alpha)}{\pi} \int_0^\infty \frac{e^{-t \lambda^\alpha} F_1(\lambda x)}{\lambda} \, d\lambda .
}
By Corollary~\ref{cor:pdt},
\formula{
 p^\cop_t(x, y) & = \frac{1}{\pi} \int_0^\infty e^{-t \lambda^\alpha} (F_1(\lambda x) F_1(\lambda y) + \sin(\lambda x) \sin(\lambda y)) d\lambda .
}
These formulae are applicable for numerical computation.
\end{example}

\begin{example}
Let $\alpha \in (0, 2)$, $\beta > 0$, and $\Psi(\xi) = \xi^2 + \beta |\xi|^\alpha$. The corresponding L\'evy process $X_t$ is a mixture of Brownian motion and the symmetric $\alpha$-stable process. Note that $X_t$ is recurrent if and only if $\alpha > 1$. We have
\formula{
 \thet_\lambda & = \arctan\expr{-\frac{1}{\pi} \, \pvint_0^\infty \frac{2 \lambda + \alpha \beta \lambda^{\alpha - 1}}{\lambda^2 + \beta \lambda^\alpha - \xi^2 - \beta \xi^\alpha} \, d\xi} .
}
This integral does not have a closed form, but it can be proved that $\lim_{\lambda \to \infty} \thet_\lambda = 0$ and $\lim_{\lambda \to 0^+} \thet_\lambda = \min(\pi/\alpha - \pi/2, \pi/2)$. By Theorem~\ref{th:glambda},
\formula{
 F_\lambda(x) & = \sin(|\lambda x| + \thet_\lambda) - \frac{(2 \lambda + \alpha \beta \lambda^{\alpha - 1}) \cos \thet_\lambda}{\pi} \times \\
 & \qquad \times \int_0^\infty \frac{\sin(\alpha \pi / 2) \xi^\alpha}{(\lambda^2 + \beta \lambda^\alpha - \xi^2)^2 + \xi^{2 \alpha} - 2 \cos(\alpha \pi / 2) (\lambda^2 + \beta \lambda^\alpha - \xi^2) \xi^\alpha} \, e^{-\xi |x|} d\xi .
}
\end{example}

\begin{example}
Suppose that $\Psi(\xi) = (\xi^2 + 1)^{\alpha/2} - 1$ is the L\'evy-Khintchine exponent of the relativistic $\alpha$-stable L\'evy process, $\alpha \in (1, 2)$, corresponding to unit mass. In this case again
\formula{
 \thet_\lambda & = \arctan\expr{-\frac{1}{\pi} \pvint_0^\infty \frac{\alpha \lambda (\lambda^2 + 1)^{\alpha/2 - 1}}{(\lambda^2 + 1)^{\alpha/2} - (\xi^2 + 1)^{\alpha/2}} \, d\xi}
}
cannot be evaluated symbolically. However, with some effort, one can show that $\lim_{\lambda \to 0^+} \thet_\lambda = 0$ and $\lim_{\lambda \to \infty} \thet_\lambda = \pi/\alpha - \pi/2$. We also have
\formula{
 F_\lambda(x) & = \sin(|\lambda x| + \thet_\lambda) - \frac{\alpha \lambda (\lambda^2 + 1)^{\alpha/2 - 1} \cos \thet_\lambda}{\pi} \times \\
 & \qquad \times \int_1^\infty \frac{\sin(\alpha \pi/2) (1 - \xi^2)^{\alpha/2}}{(\lambda^2 + 1)^\alpha + (1 - \xi^2)^\alpha - 2 \cos(\alpha \pi/2) (\lambda^2 + 1)^{\alpha/2} (1 - \xi^2)^{\alpha/2}} e^{-\xi |x|} d\xi .
}
In particular, $F_\lambda(x) - \sin(|\lambda x| + \thet_\lambda)$ decays exponentially fast as $|x| \to \infty$.
\end{example}

\begin{figure}
\centering
\begin{tabular}{cc}
\includegraphics[width=0.49\textwidth]{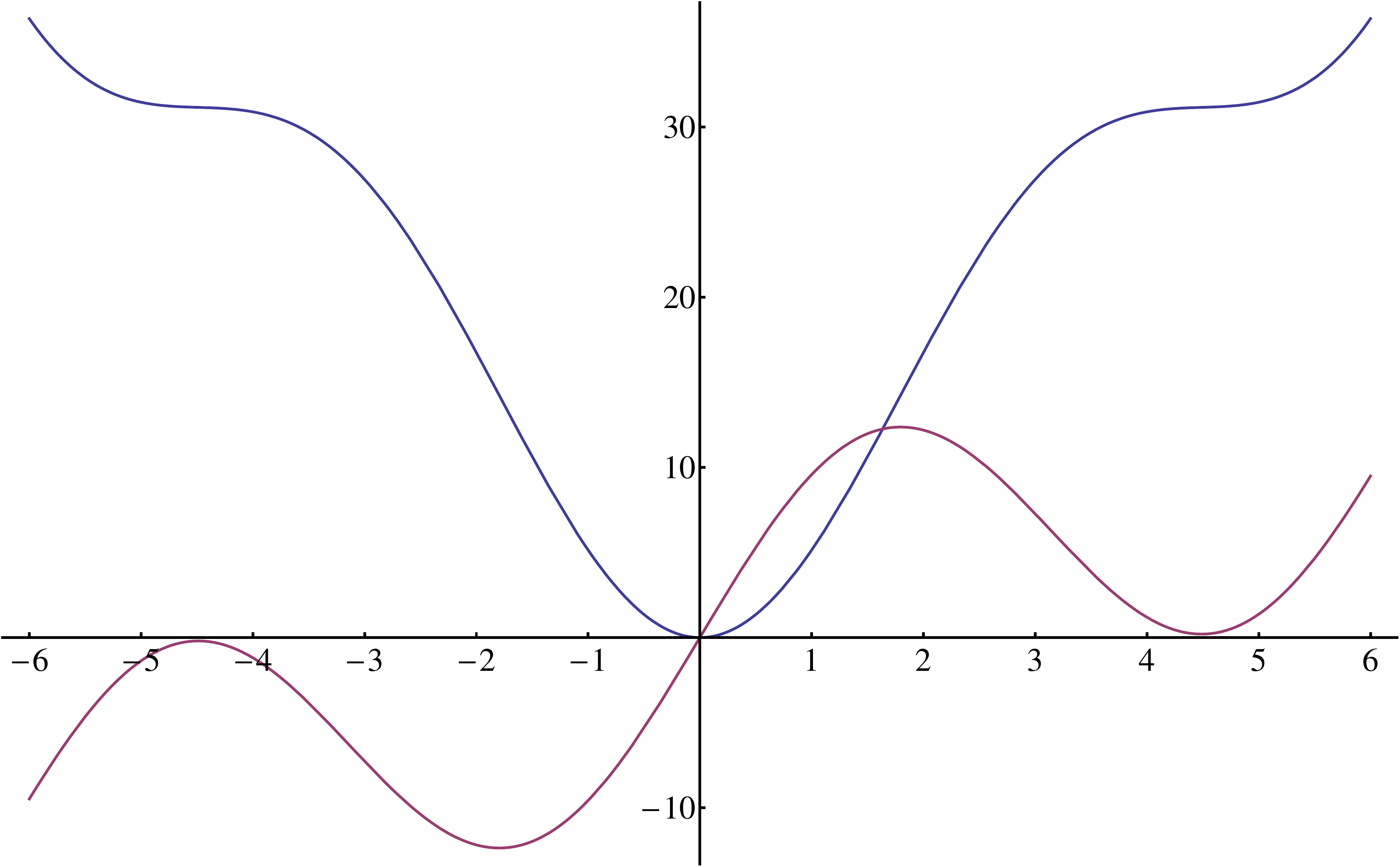}&
\includegraphics[width=0.49\textwidth]{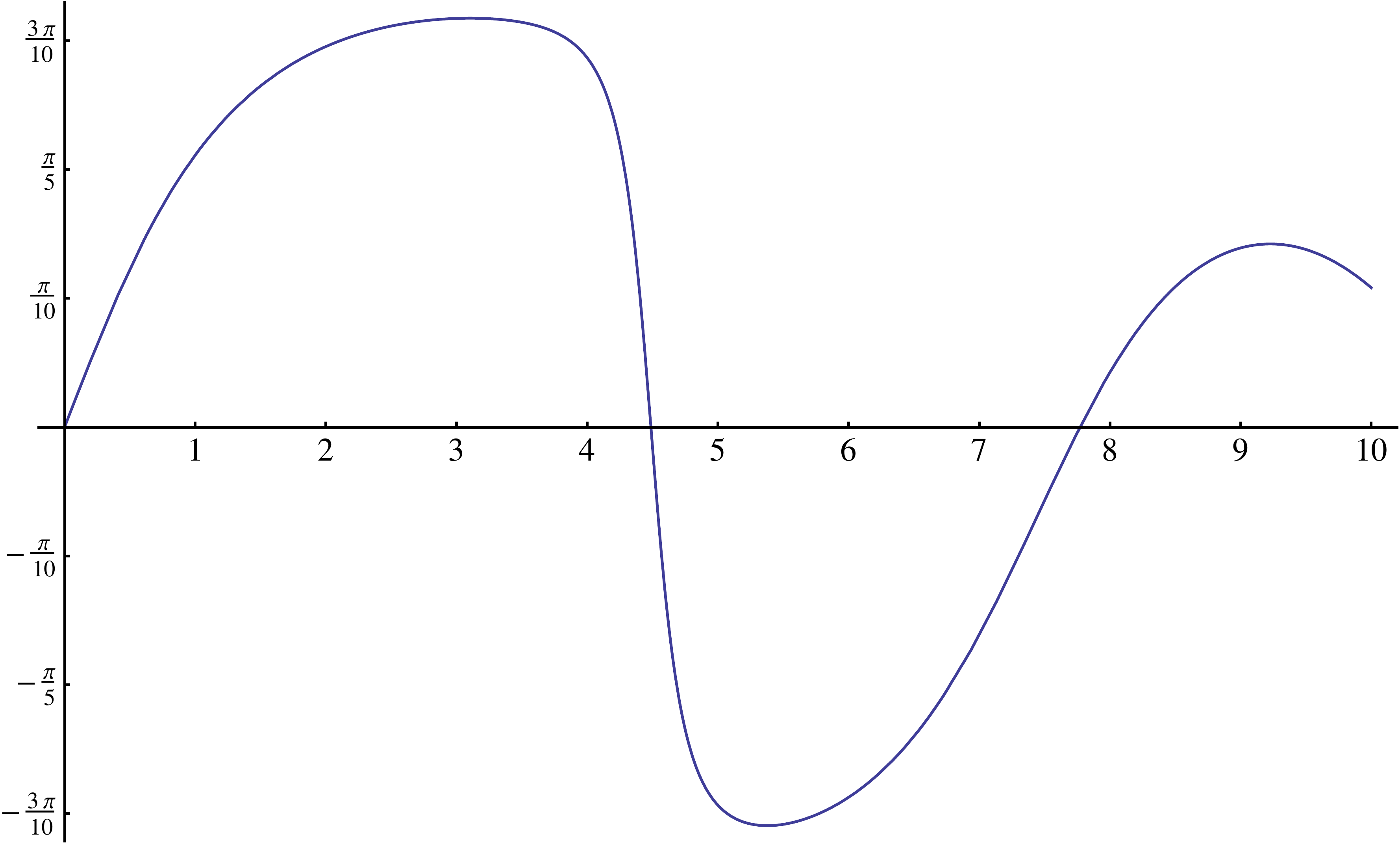}\\
\footnotesize $\Psi(\xi)$ (blue) and $\Psi'(\xi)$ (purple)&
\footnotesize $\thet_\lambda$\\[0.5em]
\includegraphics[width=0.49\textwidth]{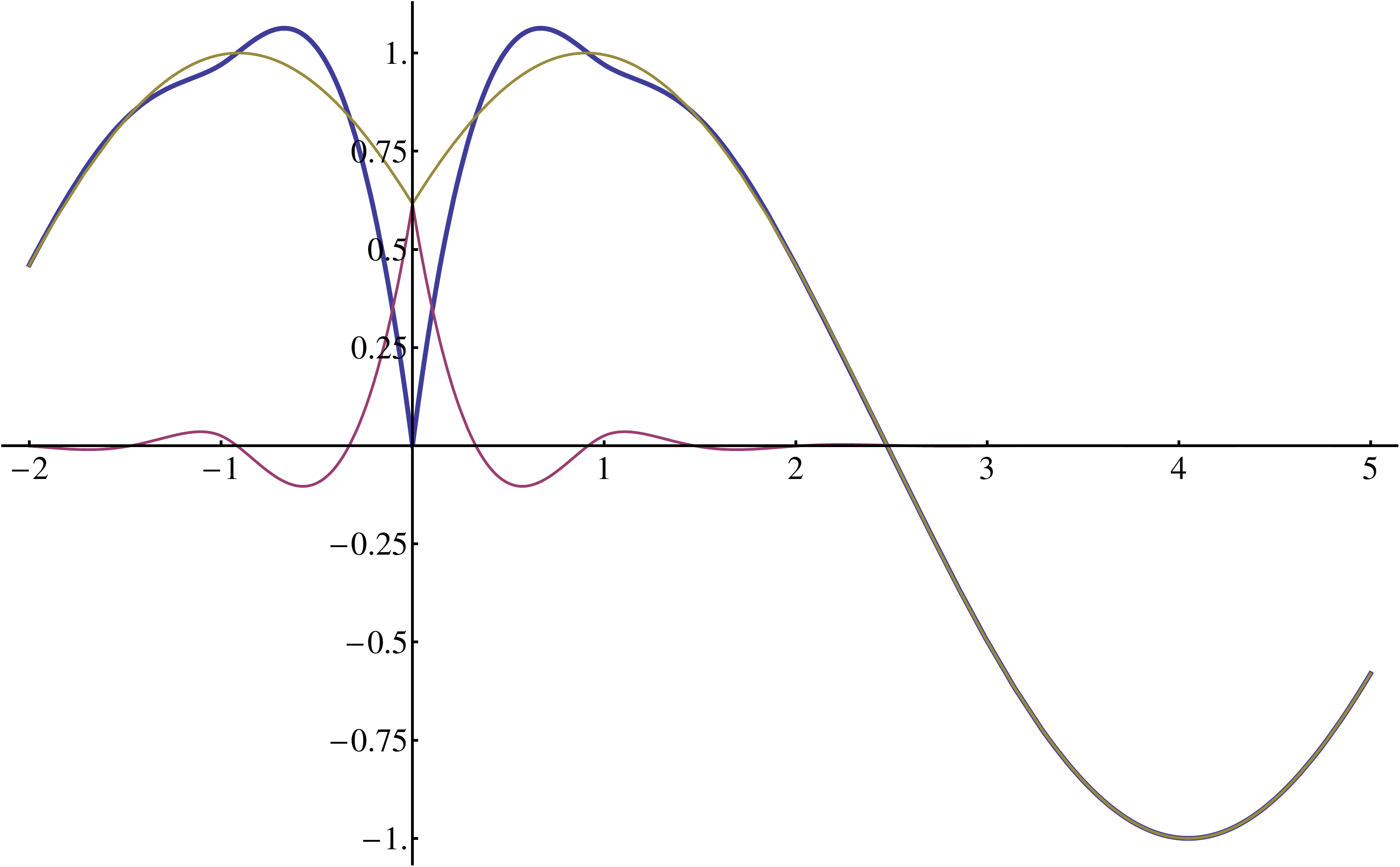}&
\includegraphics[width=0.49\textwidth]{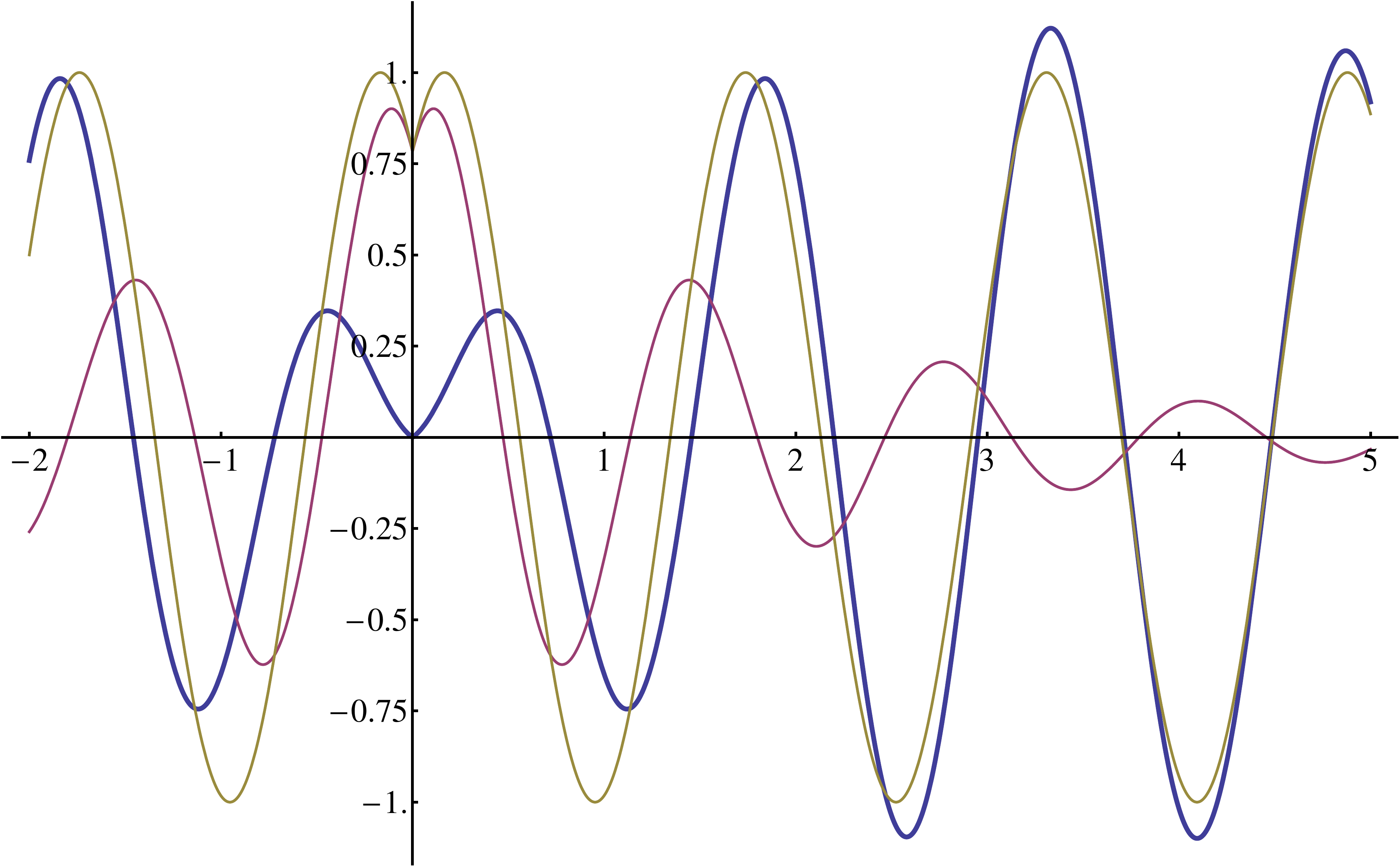}\\
\footnotesize \parbox{0.35\textwidth}{\centering $F_1(x)$ (blue), $G_1(x)$ (purple) and $\sin(|x| + \thet_1)$ (brown)}&
\footnotesize \parbox{0.35\textwidth}{\centering $F_4(x)$ (blue), $G_4(x)$ (purple) and $\sin(4|x| + \thet_4)$ (brown)}\\[0.5em]
\includegraphics[width=0.49\textwidth]{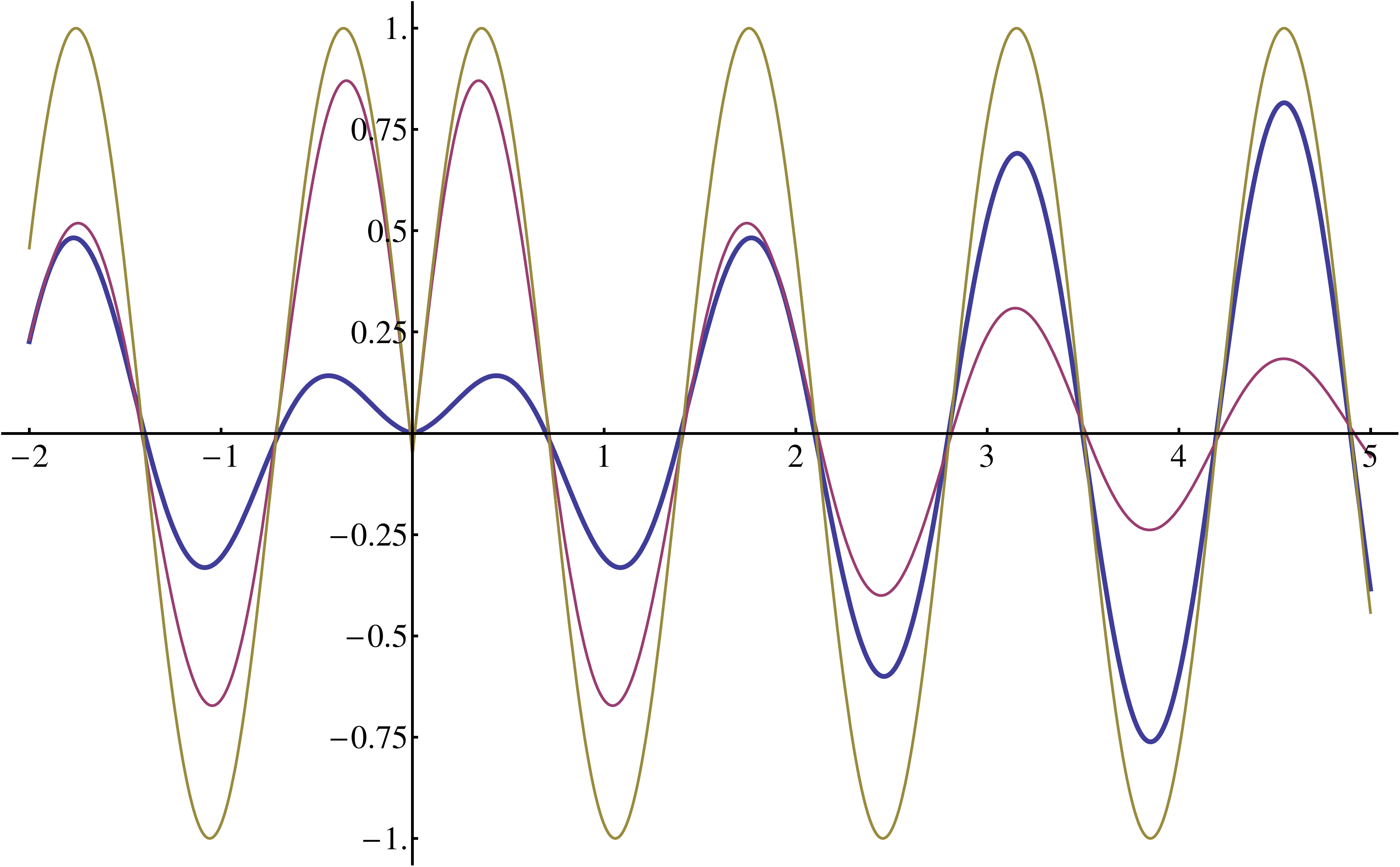}&
\includegraphics[width=0.49\textwidth]{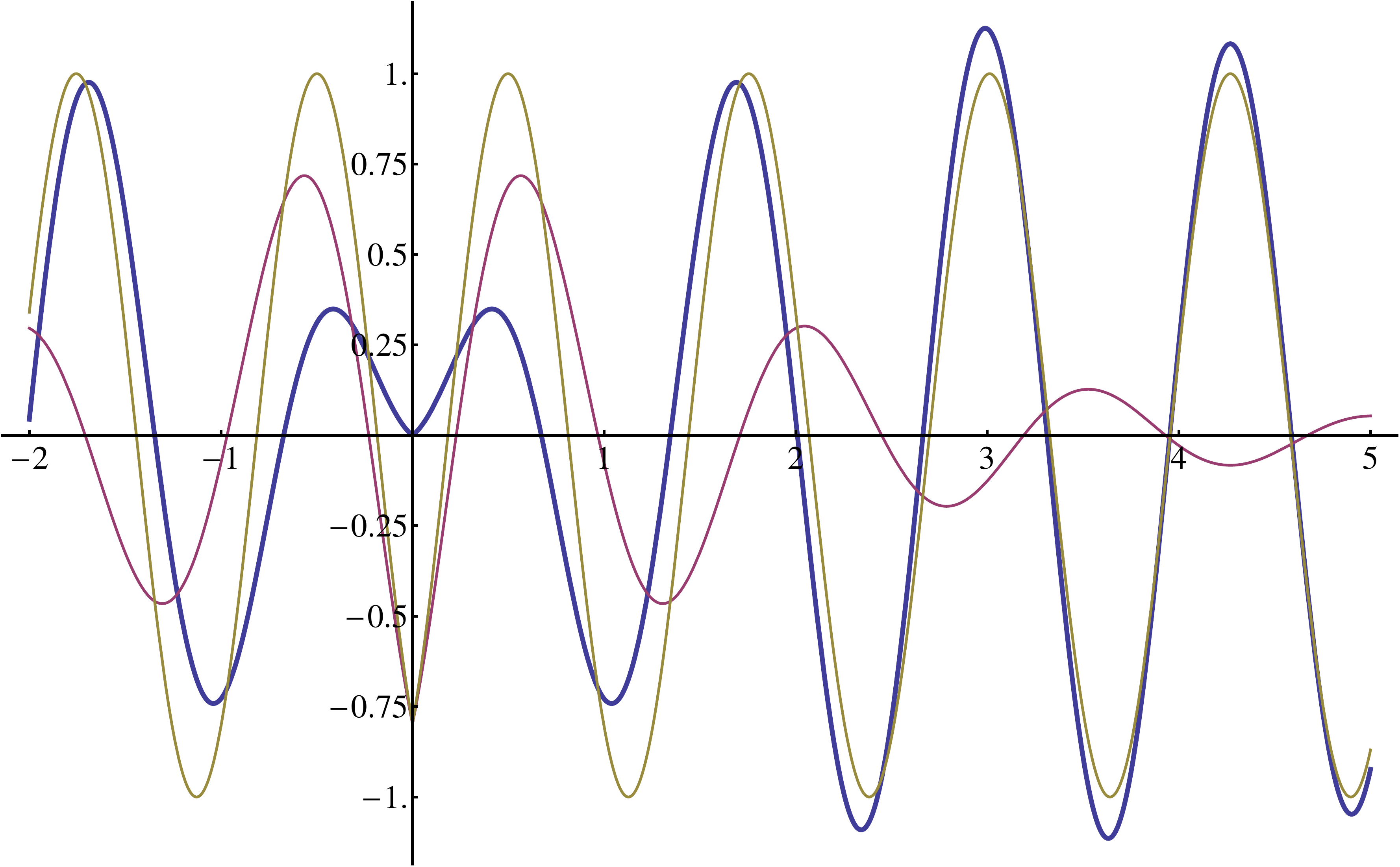}\\
\footnotesize \parbox{0.35\textwidth}{\centering $F_{4.5}(x)$ (blue), $G_{4.5}(x)$ (purple) and $\sin(4.5 |x| + \thet_{4.5})$ (brown)}&
\footnotesize \parbox{0.35\textwidth}{\centering $F_5(x)$ (blue), $G_5(x)$ (purple) and $\sin(5|x| + \thet_5)$ (brown)}
\end{tabular}
\caption{Plots for $\Psi(\xi) = \xi^2 + 9 (1 - \cos \xi)$. Note the differences between $\lambda = 1$ ($\thet_\lambda > 0$, $G_\lambda$ attains global maximum at $0$), $\lambda = 4$ ($\thet_\lambda > 0$, $G_\lambda$ has a positive local minimum at $0$, $\lambda = 4.5$ ($\thet_\lambda$ and $G_\lambda(0)$ close to $0$, slow convergence of $F_\lambda(x)$ to $\sin(|\lambda x| + \thet_\lambda)$ as $|x| \to \infty$) and $\lambda = 5$ ($\thet_\lambda < 0$, $G_\lambda$ has a negative global minimum at $0$).}
\label{fig:1}
\end{figure}

\begin{example}
Let $\Psi(\xi) = \xi^2 + 9 (1 - \cos \xi)$. The corresponding process $X_t$ is the sum of the Brownian motion (with variance $2t$) and an independent compound Poisson process with jumps $\pm 1$ occurring at rate $9$. It can be easily checked that $\Psi'(\xi) > 0$ for $\xi > 0$, and therefore Theorem~\ref{th:flambda} applies. However, $\thet_\lambda$ is negative for some $\lambda > 0$, and $G_\lambda(x)$ is a signed function, see Figure~\ref{fig:1}.

Since $\psi(\xi) = \Psi(\sqrt{\xi}) = \xi + 9 (1 - \cos \sqrt{\xi})$ is not concave, Theorem~\ref{th:tau} (or Theorem~\ref{th:taux}) cannot be applied to $X_t$.
\end{example}

\begin{figure}
\centering
\begin{tabular}{cc}
\includegraphics[width=0.49\textwidth]{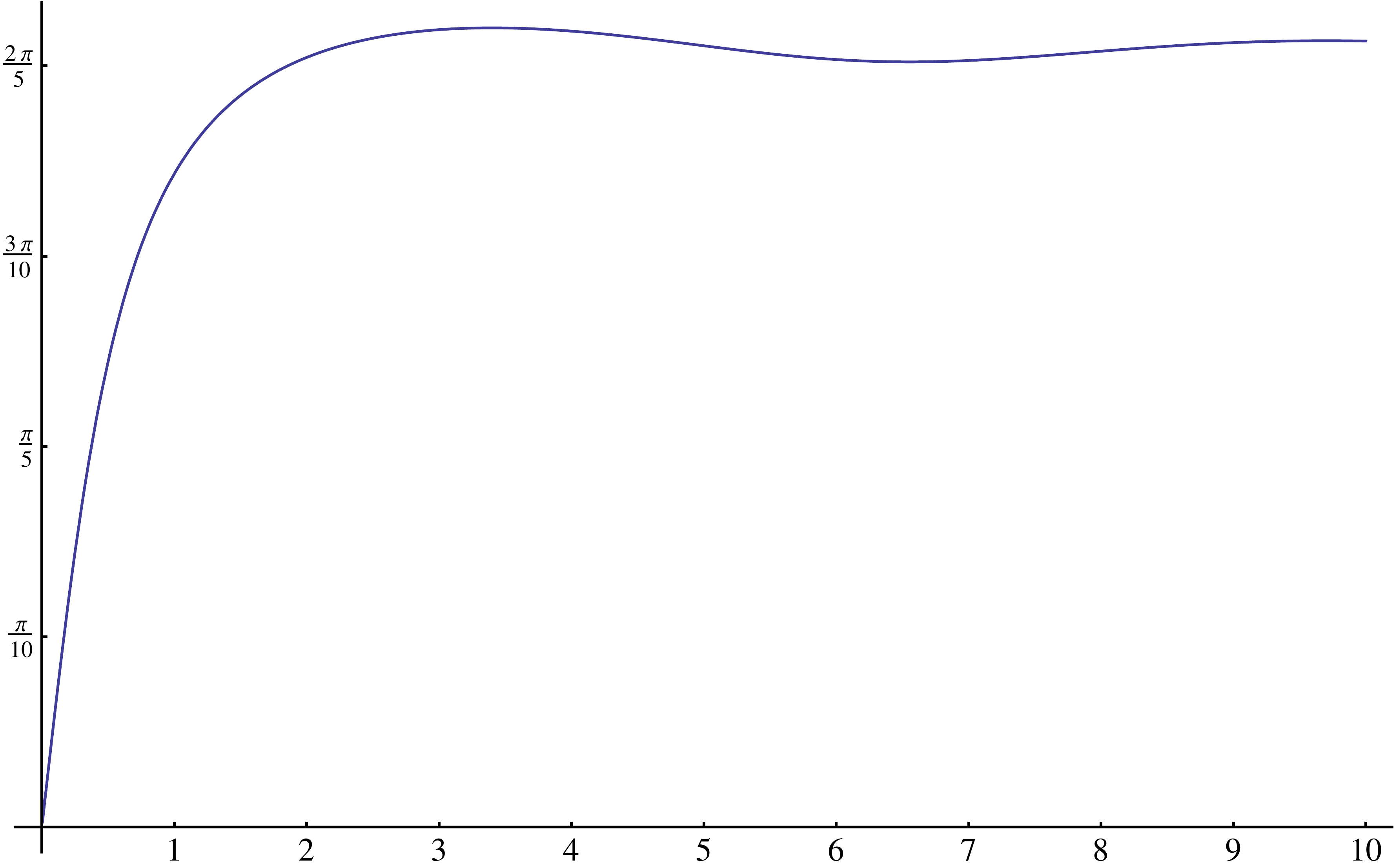}&
\includegraphics[width=0.49\textwidth]{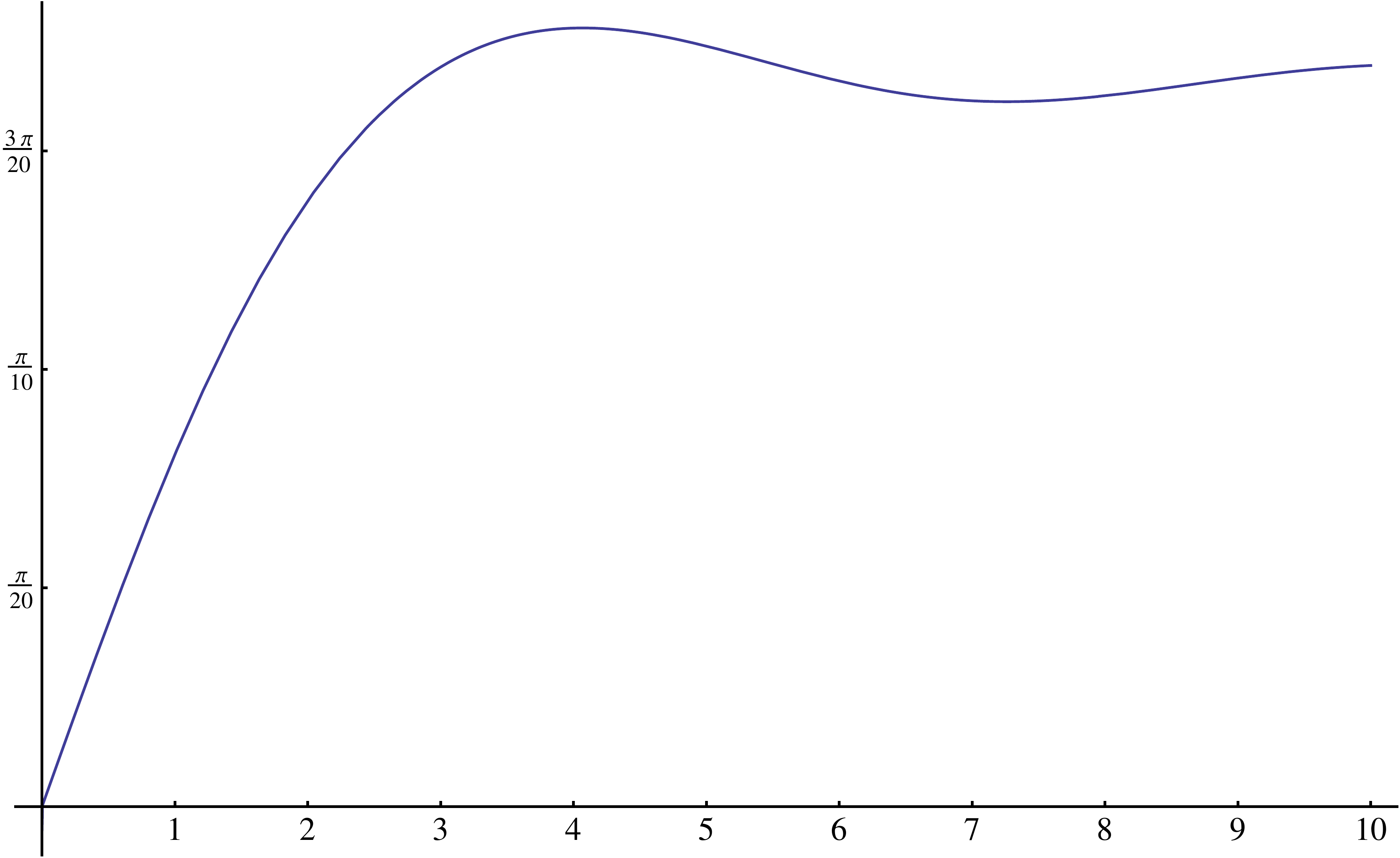}\\
\footnotesize $\thet_\lambda$ ($\alpha = 1.1$)&
\footnotesize $\thet_\lambda$ ($\alpha = 1.5$)\\[0.5em]
\includegraphics[width=0.49\textwidth]{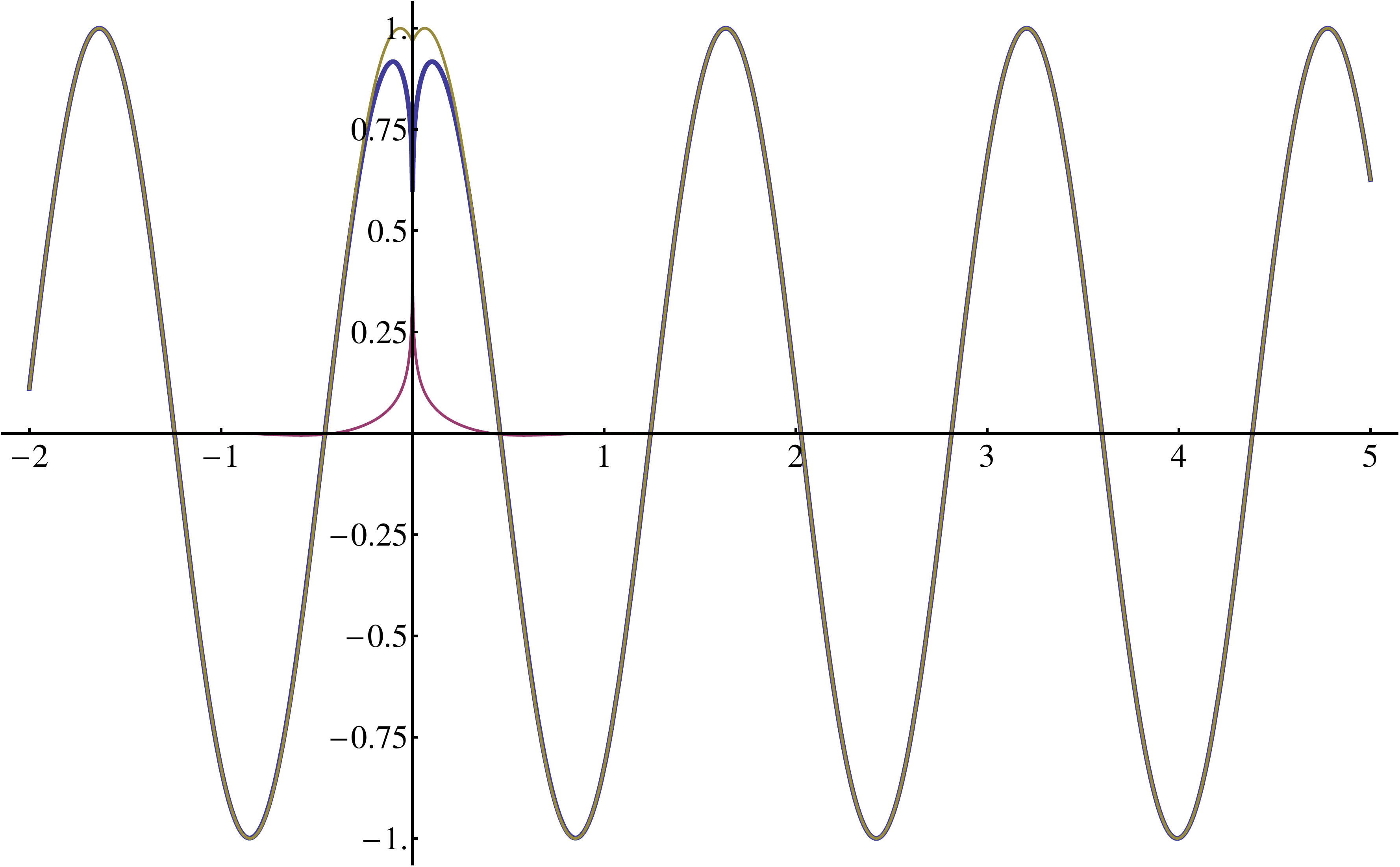}&
\includegraphics[width=0.49\textwidth]{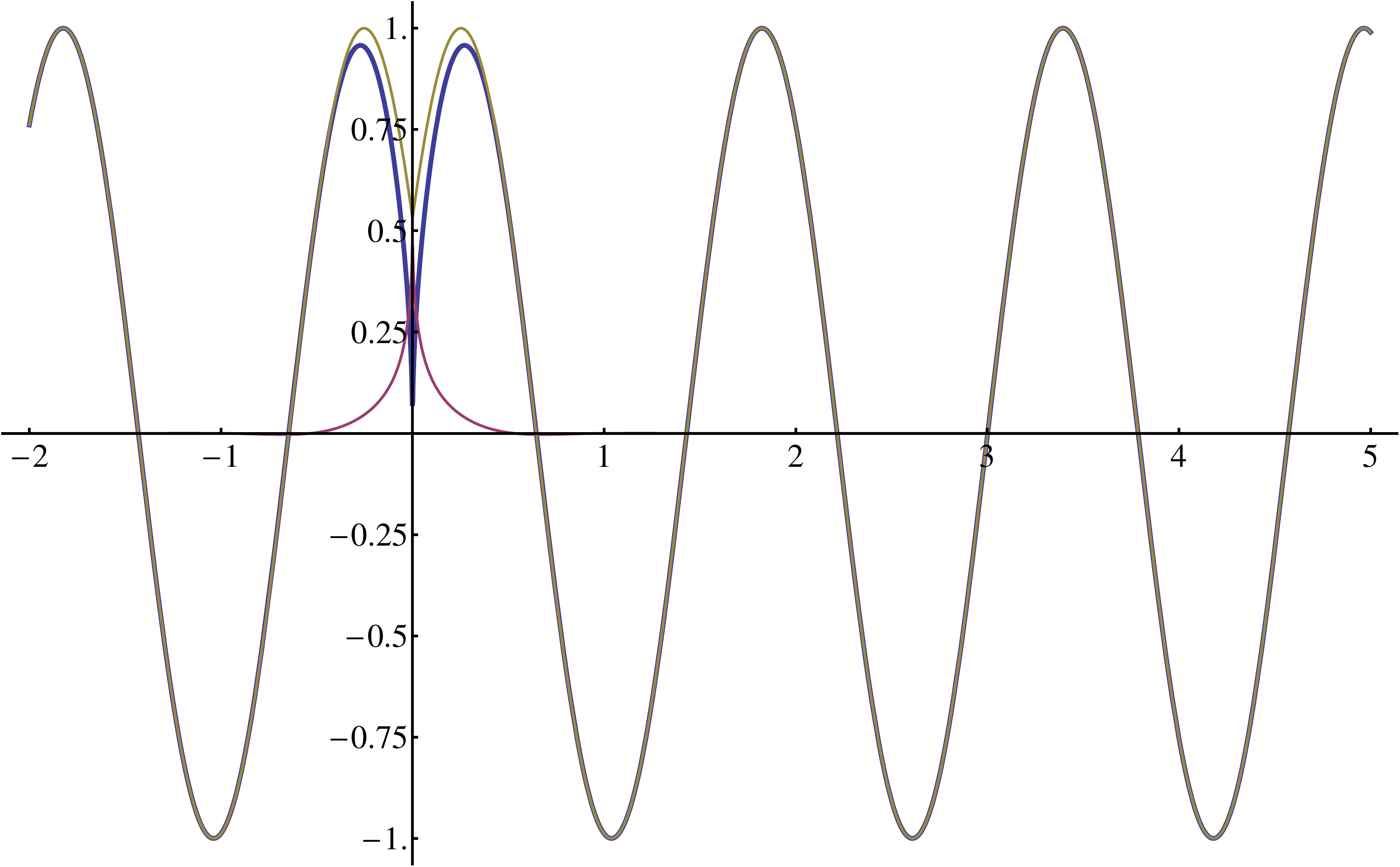}\\
\footnotesize \parbox{0.35\textwidth}{\centering $F_4(x)$ (blue), $G_1(x)$ (purple) and $\sin(4 |x| + \thet_4)$ (brown) for $\alpha = 1.1$}&
\footnotesize \parbox{0.35\textwidth}{\centering $F_4(x)$ (blue), $G_1(x)$ (purple) and $\sin(4 |x| + \thet_4)$ (brown) for $\alpha = 1.5$}\\[0.5em]
\includegraphics[width=0.49\textwidth]{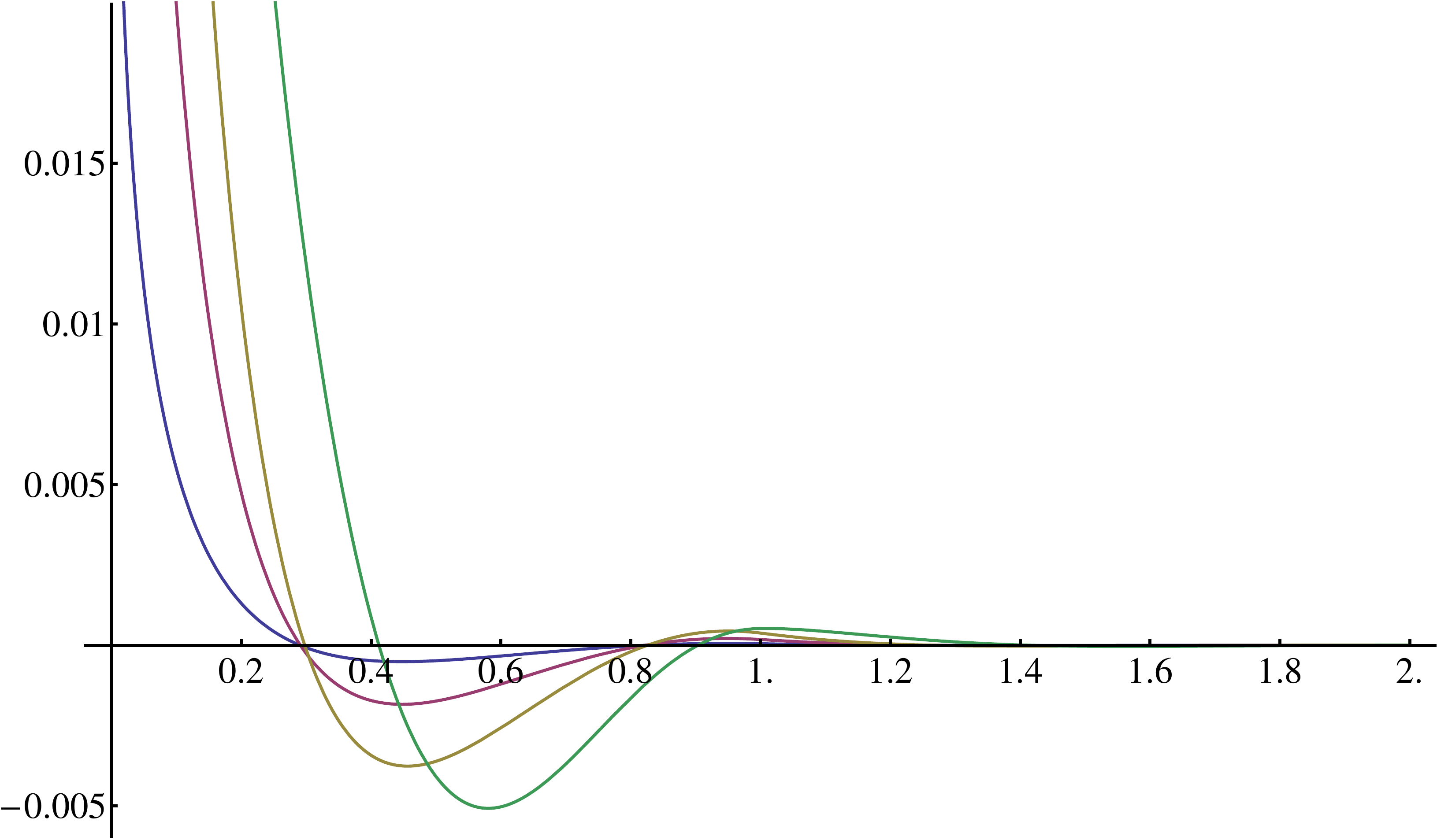}&
\includegraphics[width=0.49\textwidth]{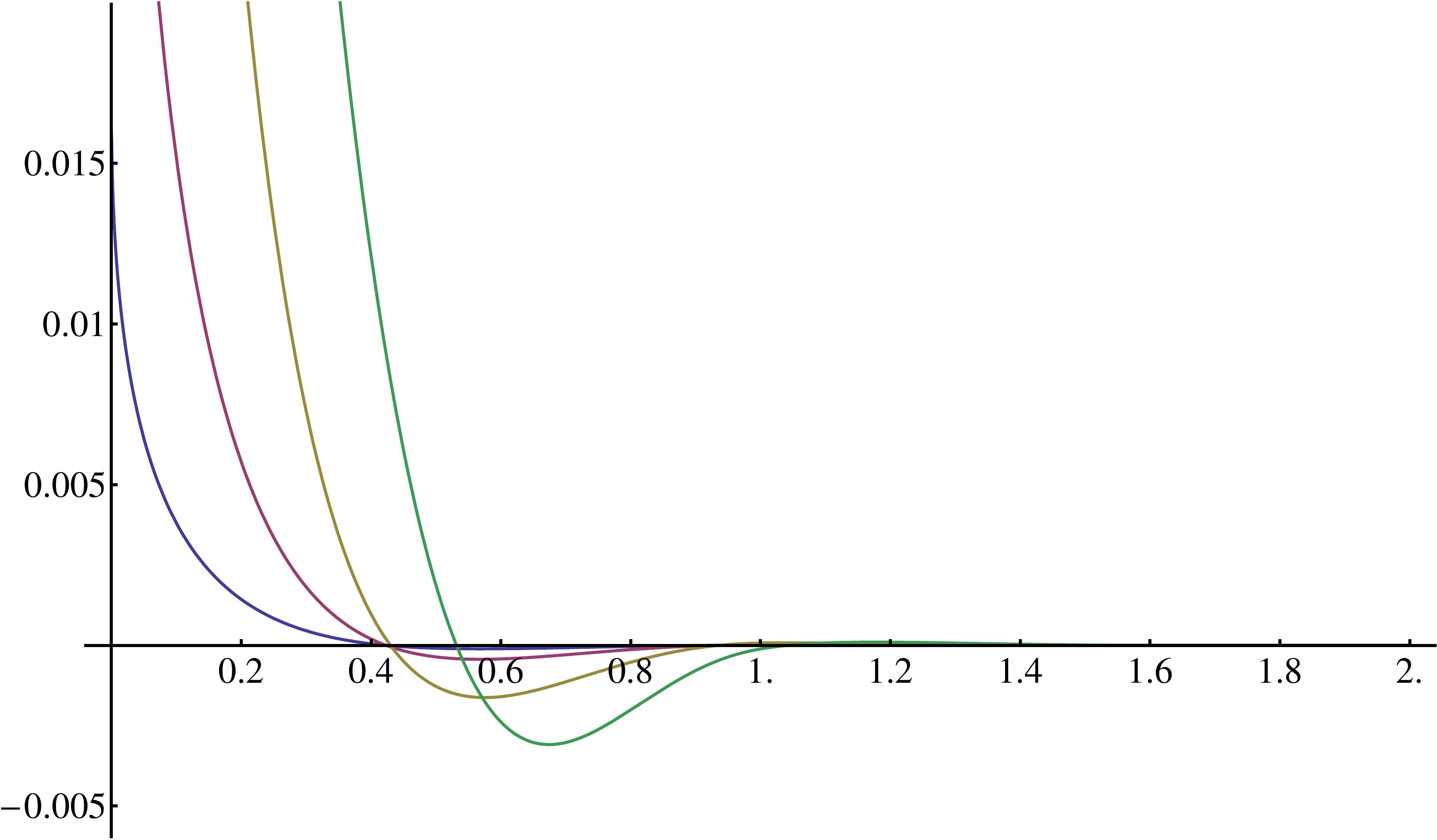}\\
\footnotesize \parbox{0.35\textwidth}{\centering $G_\lambda(x)$ for $\alpha = 1.1$ and $\lambda = 1/16$ (blue), $\lambda = 1/4$ (purple), $\lambda = 1$ (brown) and $\lambda = 4$ (green)}&
\footnotesize \parbox{0.35\textwidth}{\centering $G_\lambda(x)$ for $\alpha = 1.5$ and $\lambda = 1/16$ (blue), $\lambda = 1/4$ (purple), $\lambda = 1$ (brown) and $\lambda = 4$ (green)}\\[0.5em]
\includegraphics[width=0.49\textwidth]{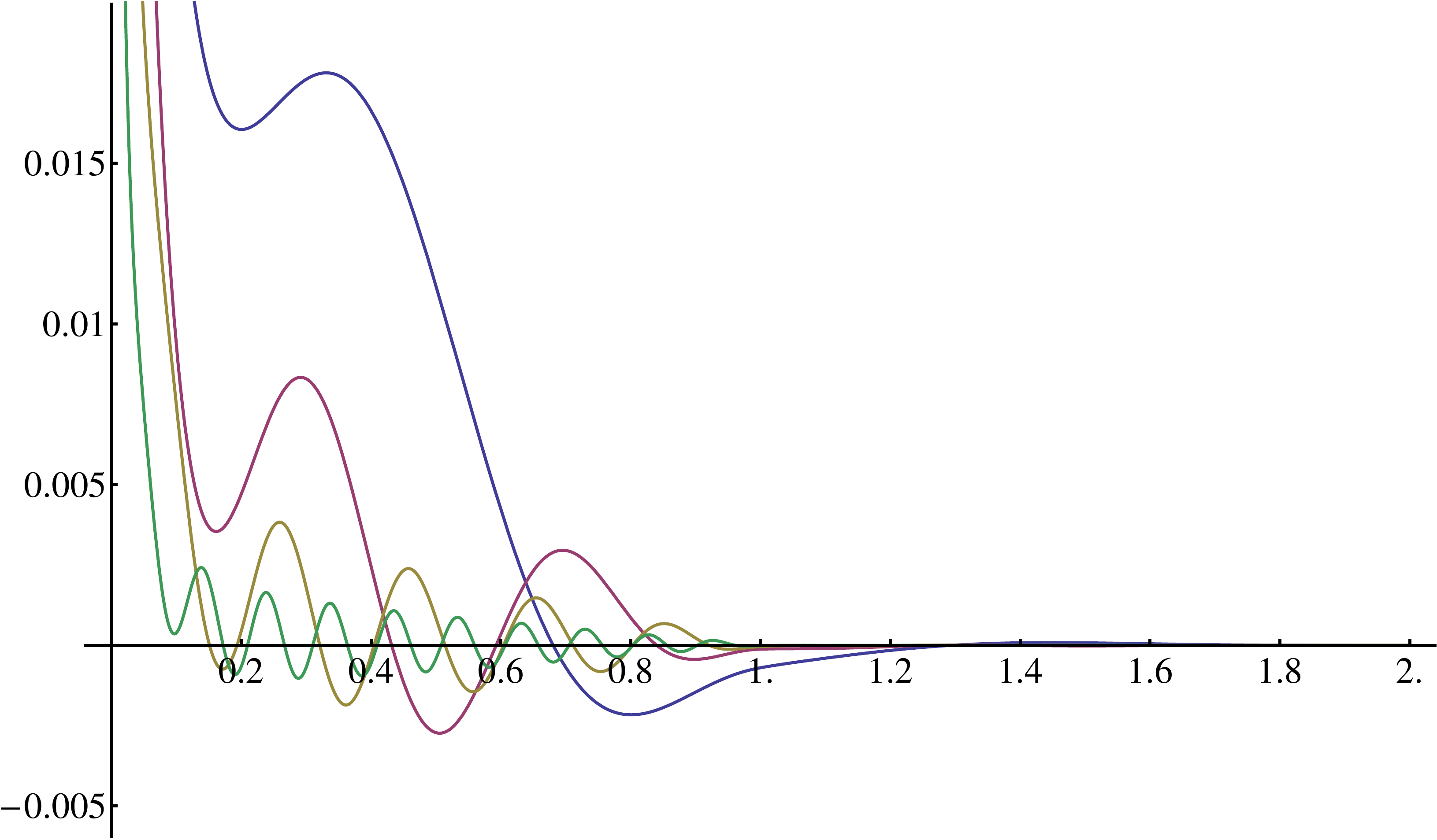}&
\includegraphics[width=0.49\textwidth]{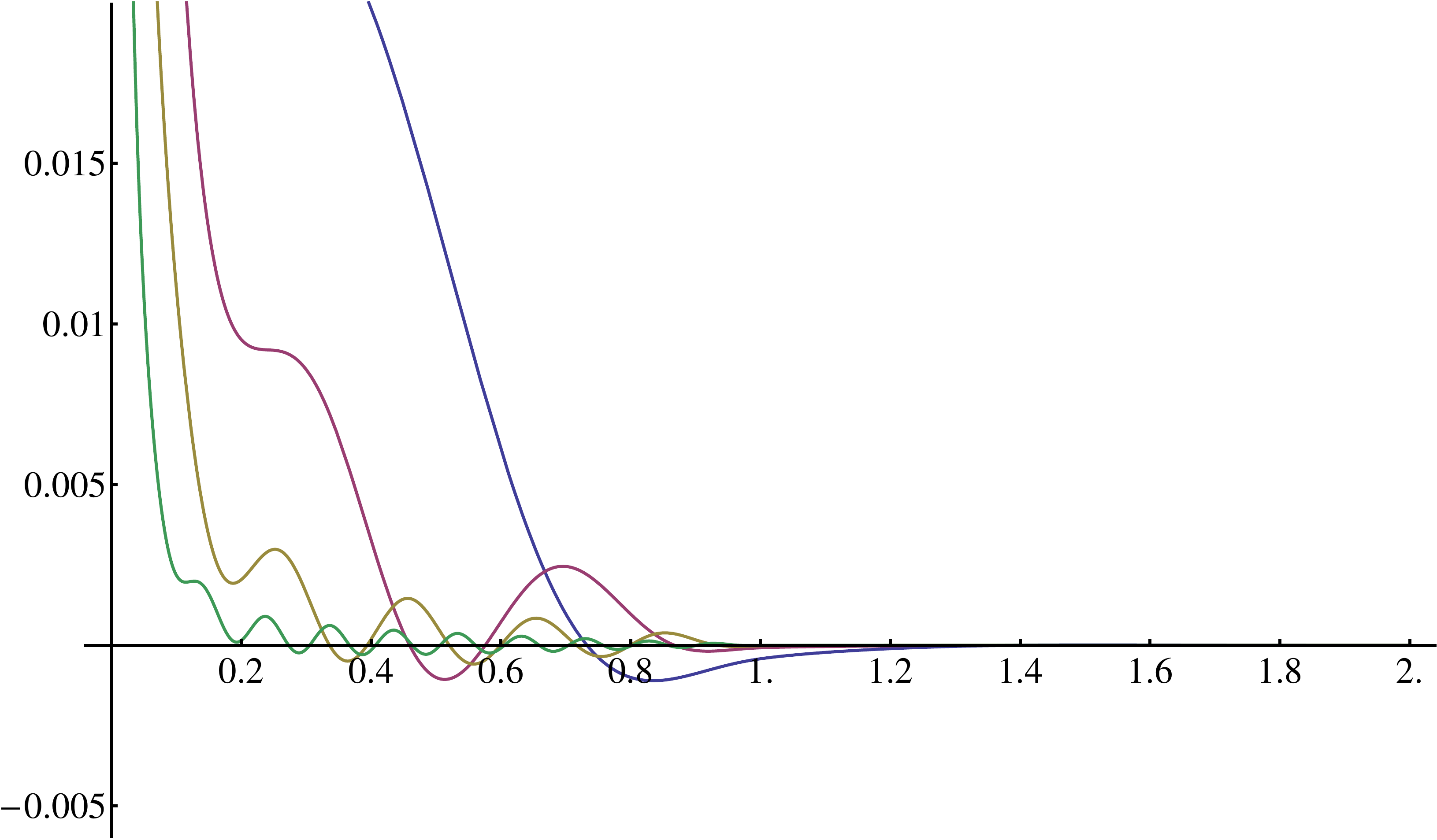}\\
\footnotesize \parbox{0.35\textwidth}{\centering $G_\lambda(x)$ for $\alpha = 1.1$ and $\lambda = 8$ (blue), $\lambda = 16$ (purple), $\lambda = 32$ (brown) and $\lambda = 64$ (green)}&
\footnotesize \parbox{0.35\textwidth}{\centering $G_\lambda(x)$ for $\alpha = 1.5$ and $\lambda = 8$ (blue), $\lambda = 16$ (purple), $\lambda = 32$ (brown) and $\lambda = 64$ (green)}
\end{tabular}
\caption{Plots for the truncated $\alpha$-stable process for $\alpha = 1.1$ and $\alpha = 1.5$. Note that $|G_\lambda(x)| \le G_\lambda(0)$, for all $\lambda > 0$, $x \in \R$, but $G_\lambda(x)$ fails to be everywhere nonnegative.}
\label{fig:3}
\end{figure}

\begin{example}
Consider the truncated symmetric $\alpha$-stable L\'evy process $X_t$, that is, a pure-jump L\'evy process $X_t$ with L\'evy measure $c |x|^{-1 - \alpha} \ind_{(-1, 1)}(x)$ for some $c > 0$. Then,
\formula{
 \psi(\xi) = \Psi(\sqrt{\xi}) & = c \int_{-1}^1 \frac{1 - \cos(\sqrt{\xi} x)}{|x|^{1 + \alpha}} \, dx = 2 c \xi^{\alpha/2} \int_0^{\sqrt{\xi}} \frac{1 - \cos s}{s^{1 + \alpha}} \, ds .
}
Clearly, $\psi'(\xi) > 0$ for all $\xi > 0$. We claim that $\psi''(\xi) \le 0$ for all $\xi > 0$. We have
\formula[eq:truncated]{
 \frac{2 \psi''(\xi)}{c} & = \frac{\sin \sqrt{\xi}}{\xi^{3/2}} - (2 - \alpha) \, \frac{1 - \cos \sqrt{\xi}}{\xi^2} - \frac{\alpha (2 - \alpha)}{\xi^{2 - \alpha/2}} \int_0^{\sqrt{\xi}} \frac{1 - \cos s}{s^{1 + \alpha}} \, ds .
}
Since $\sin s \le s - s^3/6 + s^5/120$ and $1 - \cos s \ge s^2/2 - s^4/24$ for $s > 0$, we have
\formula{
 \frac{2 \psi''(\xi)}{c} & \le \frac{1}{\xi} - \frac{1}{6} + \frac{\xi}{120} - \frac{2 - \alpha}{2 \xi} + \frac{2 - \alpha}{24} - \frac{\alpha}{2 \xi} + \frac{\alpha (2 - \alpha)}{24 (4 - \alpha)} \\
 & = -\frac{1}{3 (4 - \alpha)} + \frac{\xi}{120} \, ,
}
and hence $\psi''(\xi) \le 0$ for $\xi \le 40 / (4 - \alpha)$. When $\xi > 40 / (4 - \alpha)$, then in particular $\xi > \sqrt{12}$, and therefore, by~\eqref{eq:truncated},
\formula{
 \frac{2 \psi''(\xi)}{c} & \le \frac{1}{\xi^{3/2}} - \frac{\alpha (2 - \alpha)}{\xi^{2 - \alpha/2}} \int_0^{\sqrt{12}} \frac{s^2/2 - s^4/24}{s^{1 + \alpha}} \, ds \\
 & \le \frac{1}{\xi^{3/2}} \expr{1 - \frac{12^{1 - \alpha/2} \alpha}{(4 - \alpha)} \, \xi^{(\alpha - 1) / 2}} \le \frac{1}{\xi^{3/2}} \, (1 - \xi^{(\alpha - 1) / 2}) \le 0.
}
Our claim is proved.

In particular, we may apply Theorems~\ref{th:flambda}, \ref{th:spectral} and~\ref{th:tau} to truncated stable processes. Due to a rather involved description of $\Psi$, the computations are significantly harder in this case. See Figure~\ref{fig:3} for plots of $\thet_\lambda$ and $F_\lambda$.
\end{example}

\subsection*{Acknowledgements}
I would like to thank Kouji Yano for suggesting me the study of the spectral theory on $\cop$ and interesting discussions on this subject. I also thank Jacek Ma{\l}ecki for inventing and explaining me the proof of Theorem~1.9 in~\cite{bib:kmr11}, which made the present proof of Theorem~\ref{th:spectral} possible.

%
%                            ---------- o ----------
%

\end{document}